\newcommand{\TheTitle}{A Hybrid High-Order method for Darcy flows in fractured porous media} 
\newcommand{\TheAuthors}{F Chave, D. A. Di Pietro, and L. Formaggia}
\newtheorem{remark}[theorem]{Remark}
\newtheorem{assumption}[theorem]{Assumption}
\newcommand{\Real}{\mathbb{R}}
\DeclareMathOperator{\Hdiv}{div}
\DeclareMathOperator{\vect}{span}
\def\normal{{\boldsymbol{n}}}
\def\f{{\mathcal{F}}}
\def\t{{\mathcal{T}}}
\newcommand{\DT}{D_T^k}
\newcommand{\FT}{\boldsymbol{F}_T^{k+1}}
\newcommand{\rF}{r_F^{k+1}}
\newcommand{\bulk}{{\rm B}}
\newcommand{\uKB}{\overline{K}_\bulk}
\newcommand{\lKB}{\underline{K}_\bulk}
\newcommand{\uKBT}{\overline{K}_{\bulk,T}}
\newcommand{\lKBT}{\underline{K}_{\bulk,T}}
\newcommand{\uKG}{\overline{K}_\Gamma}
\newcommand{\lKG}{\underline{K}_\Gamma}
\newcommand{\ulkn}{\overline{\lambda}_\Gamma}
\newcommand{\llkn}{\underline{\lambda}_\Gamma}
\newcommand{\logLogSlopeTriangle}[5]
{
    \pgfplotsextra
    {
        \pgfkeysgetvalue{/pgfplots/xmin}{\xmin}
        \pgfkeysgetvalue{/pgfplots/xmax}{\xmax}
        \pgfkeysgetvalue{/pgfplots/ymin}{\ymin}
        \pgfkeysgetvalue{/pgfplots/ymax}{\ymax}

        \pgfmathsetmacro{\xArel}{#1}
        \pgfmathsetmacro{\yArel}{#3}
        \pgfmathsetmacro{\xBrel}{#1-#2}
        \pgfmathsetmacro{\yBrel}{\yArel}
        \pgfmathsetmacro{\xCrel}{\xArel}

        \pgfmathsetmacro{\lnxB}{\xmin*(1-(#1-#2))+\xmax*(#1-#2)} 
        \pgfmathsetmacro{\lnxA}{\xmin*(1-#1)+\xmax*#1} 
        \pgfmathsetmacro{\lnyA}{\ymin*(1-#3)+\ymax*#3} 
        \pgfmathsetmacro{\lnyC}{\lnyA+#4*(\lnxA-\lnxB)}
        \pgfmathsetmacro{\yCrel}{\lnyC-\ymin)/(\ymax-\ymin)}

        \coordinate (A) at (rel axis cs:\xArel,\yArel);
        \coordinate (B) at (rel axis cs:\xBrel,\yBrel);
        \coordinate (C) at (rel axis cs:\xCrel,\yCrel);

        \draw[#5]   (A)-- node[pos=0.5,anchor=north] {\scriptsize{1}}
                    (B)-- 
                    (C)-- node[pos=0.,anchor=west] {\scriptsize{#4}} 
                    (A);
    }
}
\title{{\TheTitle}\thanks{The second author acknowledges the partial support of Agence Nationale de la Recherche grant HHOMM (ref. ANR-15-CE40-0005-01). The third author acknowledges the support of INdaM-GNCS under the program Progetti 2017. The authors also acknowledge the support of the Vinci Programme of Universit\'e Franco Italienne.}}
\author{
  Florent Chave\footnotemark[2]~\footnotemark[3]
  \and 
  Daniele A. Di Pietro\thanks{University of Montpellier, Institut Montpelli\'erain Alexander Grothendieck, 34095 Montpellier, France (\email{daniele.di-pietro@umontpellier.fr})}
  \and
    Luca Formaggia\thanks{Politecnico di Milano, MOX, 20133 Milano, Italy (\email{florent.chave@polimi.fr},\email{luca.formaggia@polimi.it})}
}
\begin{document}

\maketitle

\begin{abstract}
  We develop a novel Hybrid High-Order method for the simulation of Darcy flows in fractured porous media.  
  The discretization hinges on a mixed formulation in the bulk region and a primal formulation inside the fracture.
  Salient features of the method include a seamless treatment of nonconforming discretizations of the fracture, as well as the support of arbitrary approximation orders on fairly general meshes.
  For the version of the method corresponding to a polynomial degree $k\ge 0$, we prove convergence in $h^{k+1}$ of the discretization error measured in an energy-like norm.
  In the error estimate, we explicitly track the dependence of the constants on the problem data, showing that the method is fully robust with respect to the heterogeneity of the permeability coefficients, and it exhibits only a mild dependence on the square root of the local anisotropy of the bulk permeability.
  The numerical validation on a comprehensive set of test cases confirms the theoretical results.
  \medskip\\
  \textbf{Keywords:} Hybrid High-Order methods, finite volume methods, finite element methods, fractured porous media flow, Darcy flow
  \smallskip\\
  \textbf{MSC2010 classification:} 65N08, 
  65N30, 
  76S05 
\end{abstract}

\section{Introduction}

In this work we develop a novel Hybrid High-Order (HHO) method for the numerical simulation of steady flows in fractured porous media.

The modelling of flow and transport in fractured porous media, and the correct identification of the fractures as hydraulic barriers or conductors are of utmost importance in several applications.
In the context of nuclear waste management, the correct reproduction of flow patterns plays a key role in identifying safe underground storage sites.
In petroleum reservoir modelling, accounting for the presence and hydraulic behaviour of the fractures can have a sizeable impact on the identification of drilling sites, as well as on the estimated production rates.
In practice, there are several possible ways to incorporate the presence of fractures in porous media models.
Our focus is here on the approach developed in \cite{Martin.Jaffre.Roberts:05}, where an averaging process is applied, and the fracture is treated as an interface that crosses the bulk region.
The fracture is additionally assumed to be filled of debris, so that the flow therein can still be modelled by the Darcy law.
To close the problem, interface conditions are enforced that relate the average and jump of the bulk pressure to the normal flux and the fracture pressure.
Other works where fractures are treated as interfaces include, e.g., \cite{Bastian.Chen.Ewing.Helmig.Jakobs.Reichenberger:00,Angot.Gallouet.Herbin:99,Faille.Flauraud.Nataf.Pegaz.Schneider.Willien:02}. 

Several discretization methods for flows in fractured porous media have been proposed in the literature.
In \cite{Brenner.Hennicker.Masson.Samier:16}, the authors consider lowest-order vertex- and face-based Gradient Schemes, and prove convergence in $h$ for the energy-norm of the discretization error; see also~\cite{Brenner.Groza.ea:16} and the very recent work~\cite{Droniou.Hennicker.ea:16} on two-phase flows.
Extended Finite Element methods (XFEM) are considered in \cite{Berrone.Pieraccini.Scialo:13, Antonietti.Formaggia.Scotti.Verani.Verzotti:15} in the context of fracture networks, and their convergence properties are numerically studied.
In \cite{Benedetto.Berrone.Pieraccini.Scialo:14}, the authors compare XFEM with the recently introduced Virtual Element Method (VEM), and numerically observe in both cases convergence in $N_{\rm DOF}^{\nicefrac{1}{2}}$ for the energy-norm of the discretization error, where $N_{\rm DOF}$ stands for the number of degrees of freedom; see also \cite{Benedetto:2016:HMV:2868298.2868447, BENEDETTO201623}.
Discontinuous Galerkin methods are also considered in \cite{Antonietti.Facciola.Russo.Verani.bis:16} for a single-phase flow; see also~\cite{Antonietti.Facciola.Russo.Verani:16}.
Therein, an $hp$-error analysis in the energy norm is carried out on general polygonal/polyhedral meshes possibly including elements with unbounded number of faces, and numerical experiments are presented.
A discretization method based on a mixed formulation in the mortar space has also been very recently proposed in \cite{Boon.Nordbotten:16}, where an energy-error estimate in $h$ is proved.

Our focus is here on the Hybrid High-Order (HHO) methods originally introduced in~\cite{Di-Pietro.Ern:15} in the context of linear elasticity, and later applied in~\cite{Aghili.Boyaval.ea:15,Di-Pietro.Ern.ea:14,Di-Pietro.Ern:16,Di-Pietro.Ern.ea:16} to anisotropic heterogeneous diffusion problems.
HHO methods are based on degrees of freedom (DOFs) that are broken polynomials on the mesh and on its skeleton, and rely on two key ingredients:
\begin{inparaenum}[(i)]
\item physics-dependent local reconstructions obtained by solving small, embarassingly parallel problems and
\item high-order stabilization terms penalizing face residuals.
\end{inparaenum}
These ingredients are combined to formulate local contributions, which are then assembled as in standard FE methods.
In the context of fractured porous media flows, HHO methods display several key advantages, including:
(i) the support of general meshes enabling a seamless treatment of nonconforming geometric discretizations of the fractures (see Remark~\ref{rem:nonconforming} below);
(ii) the robustness with respect to the heterogeneity and anisotropy of the permeability coefficients (see Remark~\ref{rem:robustness} below);
(iii) the possibility to increase the approximation order, which can be useful when complex phenomena such as viscous fingering or instabilities linked to thermal convection are present;
(iv) the availability of mixed and primal formulations, whose intimate connection is now well-understood~\cite{Boffi.Di-Pietro:16};
(v) the possibility to obtain efficient implementations thanks to static condensation (see Remark~\ref{rem:implementation} below).

The HHO method proposed here hinges on a mixed formulation in the bulk coupled with a primal formulation inside the fracture.
To keep the exposition as simple as possible while retaining all the key difficulties, we focus on the two-dimensional case, and we assume that the fracture is a line segment that cuts the bulk region in two.
For a given polynomial degree $k\ge 0$, two sets of DOFs are used for the flux in the bulk region: (i) polynomials of total degree up to $k$ on each face (representing the polynomial moments of its normal component) and (ii) fluxes of polynomials of degree up to $k$ inside each mesh element.
Combining these DOFs, we locally reconstruct (i) a discrete counterpart of the divergence operator and (ii) an approximation of the flux one degree higher than element-based DOFs.
These local reconstructions are used to formulate discrete counterparts of the permeability-weighted product of fluxes and of the bluk flux-pressure coupling terms.
The primal formulation inside the fracture, on the other hand, hinges on fracture pressure DOFs corresponding to (i) polynomial moments of degree up to $k$ inside the fracture edges and (ii) point values at face vertices.
From these DOFs, we reconstruct inside each fracture face an approximation of the fracture pressure of degree $(k+1)$, which is then used to formulate a tangential diffusive bilinear form in the spirit of~\cite{Di-Pietro.Ern.ea:14}.
Finally, the terms stemming from interface conditions on the fractures are treated using bulk flux DOFs and fracture pressure DOFs on the fracture edges.

A complete theoretical analysis of the method is carried out.
In Theorem~\ref{thm:stability} below we prove stability in the form of an inf-sup condition on the global bilinear form collecting the bulk, fracture, and interface contributions.
An important intermediate result is the stability of the bulk flux-pressure coupling, whose proof follows the classical Fortin argument based on a commuting property of the divergence reconstruction.
In Theorem~\ref{thm:energyerror} below we prove an optimal error estimate in $h^{k+1}$ for an energy-like norm of the error.
The provided error estimate additionally shows that the error on the bulk flux and on the fracture pressure are (i) fully robust with respect to the heterogeneity of the bulk and fracture permeabilities, and (ii) partially robust with respect to the anisotropy of the bulk permeability (with a dependence on the square root of the local anisotropy ratio).
These estimates are numerically validated, and the performance of the method is showcased on a comprehensive set of problems.
The numerical computations additionally show that the $L^{2}$-norm of the errors on the bulk and fracture pressure converge as $h^{k+2}$.

The rest of the paper is organized as follows.
In Section~\ref{sec:setting} we introduce the continuous setting and state the problem along with its weak formulation.
In Section~\ref{sec:discrete.setting} we define the mesh and the corresponding notation, and recall known results concerning local polynomial spaces and projectors thereon.
In Section~\ref{sec:hho} we formulate the HHO approximation: in a first step, we describe the local constructions in the bulk and in the fracture; in a second step, we combine these ingredients to formulate the discrete problem; finally, we state the main theoretical results corresponding to Theorems~\ref{thm:stability} (stability) and~\ref{thm:energyerror} (error estimate).
Section~\ref{sec:num.tests} contains an extensive numerical validation of the method.
Finally, Sections~\ref{sec:stability:proof} and~\ref{sec:error.analysis} contain the proofs of Theorems~\ref{thm:stability} and~\ref{thm:energyerror}, respectively.
Readers mainly interested in the numerical recipe and results can skip these sections at first reading.


\section{Continuous setting}\label{sec:setting}

\subsection{Notation}\label{sec:setting:notation}

We consider a porous medium saturated by an incompressible fluid that occupies the space region $\Omega\subset\Real^2$ and is crossed by a fracture $\Gamma$.
We next give precise definitions of these objects.
The corresponding notation is illustrated in Figure~\ref{fig:notation}.
The extension of the following discussion to the three-dimensional case is possible but is not considered here in order to alleviate the exposition; see Remark~\ref{rem:3d} for further details.

From the mathematical point of view, $\Omega$ is an open, bounded, connected, polygonal set with Lipschitz boundary $\partial\Omega$, while $\Gamma$ is an open line segment of nonzero length.
We additionally assume that $\Omega$ lies on one side of its boundary.
The set $\Omega_\bulk \coloneq \Omega \setminus\overline{\Gamma}$ represents the bulk region.
We assume that the fracture $\Gamma$ cuts the domain $\Omega$ into two disjoint connected polygonal subdomains with Lipschitz boundary, so that the bulk region can  be decomposed as $\Omega_\bulk \coloneq \Omega_{\bulk,1} \cup \Omega_{\bulk,2}$.

We denote by $\partial\Omega_\bulk \coloneq \bigcup_{i=1}^2\partial\Omega_{\bulk,i}\setminus\overline{\Gamma}$ the external boundary of the bulk region, which is decomposed into two subsets with disjoint interiors: the Dirichlet boundary $\partial\Omega^\text{D}_\bulk$, for which we assume strictly positive 1-dimensional Haussdorf measure, and the (possibly empty) Neumann boundary $\partial\Omega^\text{N}_\bulk$.
We denote by $\normal_{\partial\Omega}$ the unit normal vector pointing outward $\Omega_\bulk$.
For $i\in\{1,2\}$, the restriction of the boundary $\partial\Omega^\text{D}_\bulk$ (respectively, $\partial\Omega^\text{N}_\bulk$) to the $i$th subdomain is denoted by $\partial\Omega^\text{D}_{\bulk,i}$ (respectively, $\partial\Omega^\text{N}_{\bulk,i}$).

We denote by $\partial\Gamma$ the boundary of the fracture $\Gamma$ with the corresponding outward unit tangential vector $\boldsymbol{\tau}_{\partial\Gamma}$.
$\partial\Gamma$ is also decomposed into two disjoint subsets: the nonempty Dirichlet fracture boundary $\partial\Gamma^\text{D}$ and the (possibly empty) Neumann fracture boundary $\partial\Gamma^\text{N}$.
Notice that this decomposition is completely independent from that of $\partial\Omega_\bulk$.
Finally, $\normal_\Gamma$ and $\boldsymbol{\tau}_\Gamma$ denote, respectively, the unit normal vector to $\Gamma$ with a fixed orientation and the unit tangential vector on $\Gamma$ such that $(\boldsymbol{\tau}_\Gamma,\normal_\Gamma)$ is positively oriented.
Without loss of generality, we assume in what follows that the subdomains are numbered so that $\normal_\Gamma$ points out of $\Omega_{\bulk,1}$.

For any function $\varphi$ sufficiently regular to admit a (possibly two-valued) trace on $\Gamma$, we define the jump and average operators such that
\begin{equation*} 
  [\![\varphi ]\!]_{\Gamma} \coloneq \varphi_{|\Omega_{\bulk,1}} - \varphi_{|\Omega_{\bulk,2}},\qquad
  \{\!\{\varphi \}\!\}_{\Gamma} \coloneq \dfrac{\varphi_{|\Omega_{\bulk,1}} + \varphi_{|\Omega_{\bulk,2}}}2.
\end{equation*}
When applied to vector functions, these operators act component-wise.

\begin{figure}\centering
  \begin{tikzpicture}[scale=1.25]
    \draw[fill=green!10!white] (0,0) -- (0,3) -- (3,3) -- (3,0) -- (0,0);
    \draw[green!50!black,very thick] (0,1.5) -- (0,0) -- (3,0) -- (3,1.5);
    \draw (0.75,1.5) node {$\Omega_{\bulk,1}$};
    \draw[green!50!black,very thick] (0,1.5) -- (0,3) -- (3,3) -- (3,1.5);
    \draw (2.25,1.5) node {$\Omega_{\bulk,2}$};
    \draw[red!70!white,very thick] (1.5,-0) -- (1.5,3);
    \draw[red!70!white] (1.75,0.75) node {$\Gamma$};

    \draw[red!60!black] plot[mark=*, mark size = 2pt] (1.5,0);
    \draw[red!60!black] plot[mark=*, mark size = 2pt] (1.5,3);

	\draw[green!50!black, very thick] (3.5,1.) -- (3.6,1.2) -- (3.6,1.) -- (3.7,1.2) -- (3.7,1.) -- (3.8,1.2);  
	\draw[green!50!black] (4.2,1.08) node {$\partial\Omega_{\bulk}$};
	
	\draw[fill=green!10!white] (3.5,1.4) -- (3.5,1.7) -- (3.8,1.7) -- (3.8,1.4) -- cycle;
	\draw (5.1,1.5) node {$\Omega_{\bulk} \coloneq \Omega_{\bulk,1} \cup \Omega_{\bulk,2}$};
	
	\draw[red!60!black] plot[mark=*, mark size = 2pt] (3.65,2); 
    \draw[red!60!black] (4.1,2) node {$\partial\Gamma$};
    
    \draw[very thick, ->] (1.5,2.25) -- (2,2.25);
    \draw (2, 2.5) node {$\normal_\Gamma$};
  \end{tikzpicture}
  \caption{Illustration of the notation introduced in Section~\ref{sec:setting:notation}.\label{fig:notation}}
\end{figure}
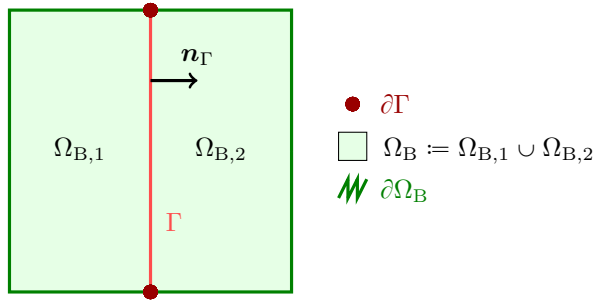

\subsection{Continuous problem}

We discuss in this section the strong formulation of the problem: the governing equations for the bulk region and the fracture, and the interface conditions that relate these subproblems.

\subsubsection{Bulk region}\label{sec:setting:strong:bulk}

In the bulk region $\Omega_\bulk$, we model the motion of the incompressible fluid by Darcy's law in mixed form, so that the pressure $p :\Omega_\bulk\rightarrow\Real$ and the flux $\boldsymbol{u} : \Omega_\bulk\rightarrow \Real^2$ satisfy
\begin{subequations}
  \label{eq:strong:bulk}
  \begin{alignat}{2}
    \boldsymbol{K} \nabla p + \boldsymbol{u} &= 0 &\qquad&\text{in $\Omega_\bulk$},\label{eq:strong:bulk:1}
    \\
    \nabla\cdot \boldsymbol{u}&= f &\qquad&\text{in $\Omega_\bulk$},\label{eq:strong:bulk:2}
    \\
    p &= g_\bulk &\qquad&\text{on $\partial\Omega^\text{D}_\bulk$},\label{eq:strong:bulk:3}
    \\
    \boldsymbol{u}\cdot \boldsymbol{n}_{\partial\Omega}&=0 &\qquad&\text{on $\partial\Omega^\text{N}_\bulk$},\label{eq:strong:bulk:4}
  \end{alignat}
\end{subequations}
where $f\in L^2(\Omega_\bulk)$ denotes a volumetric source term, $g_\bulk\in H^{\nicefrac12}(\partial\Omega^\text{D}_\bulk)$ the boundary pressure, and $\boldsymbol{K}:\Omega_\bulk \rightarrow \mathbb{R}^{2\times 2}$ the bulk permeability tensor, which is assumed to be symmetric, piecewise constant on a fixed polygonal partition ${\cal P}_\bulk=\{\omega_\bulk\}$ of $\Omega_\bulk$, and uniformly elliptic so that there exist two strictly positive real numbers $\lKB$ and $\uKB$ satisfying, for a.e. $\boldsymbol{x}\in\Omega_\bulk$ and all $\boldsymbol{z}\in\Real^2$ such that $|\boldsymbol{z}| = 1$,
$$
0 < \lKB \leq \boldsymbol{K}(\boldsymbol{x})\boldsymbol{z}\cdot\boldsymbol{z} \leq \uKB.
$$
For further use, we define the global anisotropy ratio
\begin{equation}\label{eq:varrho.B}
  \varrho_\bulk\coloneq\frac{\uKB}{\lKB}.
\end{equation}

\subsubsection{Fracture}\label{sec:setting:strong:fracture}

Inside the fracture, we consider the motion of the fluid as governed by Darcy's law in primal form, so that the fracture pressure $p_\Gamma :\Gamma\rightarrow\Real$ satisfies 
\begin{subequations}\label{eq:strong:fract}
  \begin{alignat}{2}\label{eq:strong:fract:1}
    -\nabla_{\tau}\cdot(K_\Gamma\nabla_\tau p_\Gamma)  &=\ell_\Gamma f_\Gamma + [\![ \boldsymbol{u} ]\!]_\Gamma \cdot\boldsymbol{n}_\Gamma &\qquad&\text{in $\Gamma$},
    \\\label{eq:strong:fract:2}
    p_\Gamma &= g_{\Gamma} &\qquad&\text{on $\partial\Gamma^{\rm  D}$},
    \\\label{eq:strong:fract:3}
    K_\Gamma\nabla_\tau p_\Gamma\cdot \boldsymbol{\tau}_{\partial\Gamma} &=0 &\qquad&\text{on $\partial\Gamma^{\rm N}$},
  \end{alignat}
\end{subequations}
where $f_\Gamma\in L^2(\Gamma)$ and $K_\Gamma \coloneq \kappa_\Gamma^\tau\ell_\Gamma$ with $\kappa_\Gamma^\tau:\Gamma\rightarrow \mathbb{R}$ and $\ell_\Gamma:\Gamma\rightarrow \mathbb{R}$ denoting the tangential permeability and thickness of the fracture, respectively.
The quantities $\kappa_\Gamma^\tau$ and $\ell_\Gamma$ are assumed piecewise constant on a fixed partition ${\cal P}_\Gamma=\{\omega_\Gamma\}$ of $\Gamma$, and such that there exist strictly positive real numbers $\lKG$,$\uKG$ such that, for a.e. $\boldsymbol{x}\in\Gamma$,
\begin{equation*} 
  0<\lKG \le K_\Gamma(\boldsymbol{x}) \le \uKG.
\end{equation*}
In~\eqref{eq:strong:fract}, $\nabla_\tau$ and $\nabla_\tau\cdot$ denote the tangential gradient and divergence operators along $\Gamma$, respectively.

\begin{remark}[Immersed fractures]
  The Neumann boundary condition \eqref{eq:strong:fract:3} has been used for immersed fracture tips.
  The case where the fracture is fully immersed in the domain $\Omega$ can be also considered, and it leads to a homogeneous Neumann boundary condition \eqref{eq:strong:fract:3} on the whole fracture boundary; for further details, we refer to \cite[Section 2.2.3]{Angot.Boyer.Hubert:09}, \cite{Brenner.Hennicker.Masson.Samier:16} or more recently \cite{scotti.formaggia.sottocasa:17}.
\end{remark}

\subsubsection{Coupling conditions}

The subproblems \eqref{eq:strong:bulk} and \eqref{eq:strong:fract} are coupled by the following interface conditions:
\begin{equation}\label{eq:strong:couplcond}
  \begin{alignedat}{2}
    \lambda_\Gamma\{\!\{\boldsymbol{u} \}\!\}_{\Gamma}\cdot\normal_\Gamma  &= [\![ p ]\!]_\Gamma &\qquad&\text{on $\Gamma$,}
    \\
    \lambda_\Gamma^\xi [\![\boldsymbol{u} ]\!]_{\Gamma}\cdot\normal_\Gamma &= \{\!\{p \}\!\}_\Gamma - p_\Gamma &\qquad&\text{on $\Gamma$},
  \end{alignedat}
\end{equation}
where $\xi \in (\frac12,1\rbrack$ is a model parameter chosen by the user and we have set
\begin{equation}\label{eq:lambda.lambdaxi}
  \lambda_\Gamma\coloneq\frac{\ell_\Gamma}{\kappa_\Gamma^n},\qquad
  \lambda_\Gamma^\xi\coloneq\lambda_\Gamma\left(\frac\xi2-\frac14\right).
\end{equation}
As above, $\ell_\Gamma$ is the fracture thickness, while $\kappa_\Gamma^n :\Gamma\rightarrow\mathbb{R}$ represents the normal permeability of the fracture, which is assumed piecewise constant on the partition ${\cal P}_\Gamma$ of $\Gamma$ introduced in Section~\ref{sec:setting:strong:fracture}, and 
such that, for a.e. $\boldsymbol{x}\in\Gamma$,
\begin{equation}\label{eq:bnd.kappan}
  0<\llkn \leq \lambda_\Gamma(\boldsymbol{x}) \leq \ulkn,
\end{equation}
for two given strictly positive real numbers $\ulkn$ and $\llkn$. 
\begin{remark}[Coupling condition and choice of the formulation]
  The coupling conditions \eqref{eq:strong:couplcond} arise from the averaging process along the normal direction to the fracture, and are necessary to close the problem. They relate the jump and average of the bulk flux to the jump and average of the bulk pressure and the fracture pressure.
  Using as a starting point the mixed formulation \eqref{eq:strong:bulk} in the bulk enables a natural discretization of the coupling conditions, as both the normal flux and the bulk pressure are present as unknowns. On the other hand, the use of the primal formulation \eqref{eq:strong:fract} seems natural in the fracture, since only the fracture pressure appears in \eqref{eq:strong:couplcond}. HHO discretizations using a primal formulation in the bulk as a starting point will make the object of a future work.
\end{remark}

\begin{remark}[Extension to discrete fracture networks] The model could be extended to fracture networks.
  In this case, additional coupling conditions enforcing the mass conservation and pressure continuity at fracture intersections should be included; see e.g., \cite{Brenner.Hennicker.Masson.Samier:16, Brenner.Groza.Jeannin.Masson.Pellerin:16}.
\end{remark}

\subsection{Weak formulation}\label{subsec:weakform}

The weak formulation of problem~\eqref{eq:strong:bulk}--\eqref{eq:strong:fract}--\eqref{eq:strong:couplcond} hinges on the following function spaces:
\begin{gather*}
  \boldsymbol{U} \coloneq \{\boldsymbol{u} \in\boldsymbol{H}(\Hdiv ;\Omega_\bulk) ~|~  \boldsymbol{u}\cdot\boldsymbol{n}_{\partial\Omega} = 0 \text{ on } \partial\Omega^{\rm N}_\bulk \text{ and }(\boldsymbol{u}_{|\Omega_{\bulk,1}}\cdot\boldsymbol{n}_{\Gamma}, \boldsymbol{u}_{|\Omega_{\bulk,2}}\cdot\boldsymbol{n}_{\Gamma})\in L^2(\Gamma)^2\},
  \\
  P_\bulk \coloneq L^2(\Omega_\bulk), \qquad P_\Gamma \coloneq\{p_\Gamma \in H^1(\Gamma) ~|~ p_\Gamma = 0 \text{ on } \partial\Gamma^\text{D}\},
\end{gather*}
where $\boldsymbol{H}(\Hdiv ;\Omega_\bulk)$ is spanned by vector-valued functions on $\Omega_\bulk$ whose restriction to every bulk subregion $\Omega_{\bulk,i}$, $i\in\{1,2\}$, is in $\boldsymbol{H}(\Hdiv; \Omega_{\bulk,i})$.

For any $X\subset\overline{\Omega}$, we denote by $(\cdot,\cdot)_X$ and $\|{\cdot}\|_X$ the usual inner product and norm of $L^2(X)$ or $L^2(X)^2$, according to the context.
We define the bilinear forms $a_\xi : \boldsymbol{U}\times \boldsymbol{U} \rightarrow \Real$, $b : \boldsymbol{U}\times P_\bulk \rightarrow \Real$, $c : \boldsymbol{U}\times P_\Gamma \rightarrow \Real$, and $d : P_\Gamma\times P_\Gamma \rightarrow \Real$ as follows:
\begin{equation}\label{def:cont.bilin.forms}
  \begin{aligned}
    a_\xi(\boldsymbol{u},\boldsymbol{v})
    &\coloneq (\boldsymbol{K}^{-1}\boldsymbol{u} , \boldsymbol{v})_{\Omega_\bulk}
    \hspace*{-4px}+ \hspace*{-2px}(\lambda_\Gamma^\xi[\![\boldsymbol{u}]\!]_\Gamma{\cdot}\boldsymbol{n}_\Gamma,[\![\boldsymbol{v}]\!]_\Gamma{\cdot}\boldsymbol{n}_\Gamma)_\Gamma
    \hspace*{-2px}+ \hspace*{-2px} (\lambda_\Gamma\{\!\{\boldsymbol{u}\}\!\}_\Gamma{\cdot}\normal_\Gamma,\{\!\{\boldsymbol{v}\}\!\}_\Gamma{\cdot}\normal_\Gamma)_\Gamma,
    \\
    b(\boldsymbol{u},q) &\coloneq (\nabla\cdot \boldsymbol{u},q)_{\Omega_\bulk},
    \\
    c(\boldsymbol{u},q_\Gamma) &\coloneq ([\![\boldsymbol{u}]\!]_\Gamma\cdot\normal_\Gamma,q_\Gamma)_\Gamma,
    \\
    d(p_\Gamma, q_\Gamma) &\coloneq (K_\Gamma\nabla_\tau p_\Gamma,\nabla_\tau q_\Gamma)_\Gamma.    
  \end{aligned}
\end{equation}
With these spaces and bilinear forms, the weak formulation of problem~\eqref{eq:strong:bulk}--\eqref{eq:strong:fract}--\eqref{eq:strong:couplcond} reads:
Find $(\boldsymbol{u}, p, p_{\Gamma,0})\in\boldsymbol{U}\times P_\bulk\times P_\Gamma$ such that%
\begin{equation}\label{eq:weak}
  \begin{alignedat}{5}
  	&a_\xi(\boldsymbol{u},\boldsymbol{v}) &- b(\boldsymbol{v},p) &+ c(\boldsymbol{v},p_{\Gamma,0}) &=&- (g_\bulk,\boldsymbol{v}\cdot\normal_{\partial\Omega})_{\partial\Omega^\text{D}_\bulk} &\qquad&\forall \boldsymbol{v}\in\boldsymbol{U},
  	\\
        &b(\boldsymbol{u},q)&& &=&~ (f,q)_{\Omega_\bulk} &\qquad&\forall q\in P_\bulk,
        \\
        -& c(\boldsymbol{u},q_\Gamma) &&+d(p_{\Gamma,0}, q_\Gamma)  ~&=&~ (\ell_\Gamma f_\Gamma,q_\Gamma)_\Gamma - d(p_{\Gamma,\mathrm{D}}, q_\Gamma) &\qquad&\forall q_\Gamma\in P_\Gamma,
  \end{alignedat}
\end{equation}
where $p_{\Gamma,\mathrm{D}}\in H^1(\Gamma)$ is a lifting of the fracture Dirichlet boundary datum such that $(p_{\Gamma,\mathrm{D}})_{|\partial\Gamma^{\mathrm{D}}}=g_\Gamma$.
The fracture pressure is then computed as $p_\Gamma = p_{\Gamma,0} + p_{\Gamma,\mathrm{D}}.$ This problem is well-posed; we refer the reader to \cite[Proposition 2.4]{Antonietti.Formaggia.Scotti.Verani.Verzotti:15} for a proof.


\section{Discrete setting}\label{sec:discrete.setting}

\subsection{Mesh}

The HHO method is built upon a polygonal mesh of the domain $\Omega$ defined prescribing a set of mesh elements $\t_h$ and a set of mesh faces $\f_h$.

The set of mesh elements $\t_h$ is a finite collection of open disjoint polygons with nonzero area such that $\overline\Omega = \bigcup_{T\in \t_h} \overline T$ and $h = \max_{T\in \t_h} h_T$, with $h_T$ denoting the diameter of $T$.
We also denote by $\partial T$ the boundary of a mesh element $T\in\t_{h}$.
The set of mesh faces $\f_h$ is a finite collection of open disjoint line segments in $\overline{\Omega}$ with nonzero length such that, for all $F\in\f_h$, (i) either there exist two distinct mesh elements $T_1,T_2\in\t_h$ such that $F\subset\partial T_1\cap\partial T_2$ (and $F$ is called an interface) or (ii) there exist a (unique) mesh element $T\in\t_h$ such that $F\subset\partial T\cap\partial\Omega$ (and $F$ is called a boundary face).
We assume that $\f_h$ is a partition of the mesh skeleton in the sense that $\bigcup_{T\in\t_h}\partial T=\bigcup_{F\in\f_h} \overline{F}$.

\begin{remark}[Mesh faces]
  Despite working in two space dimensions, we have preferred the terminology ``face'' over ``edge'' in order to (i) be consistent with the standard HHO nomenclature and (ii) stress the fact that faces {\em need not} coincide with polygonal edges (but can be subsets thereof); see also Remark~\ref{rem:nonconforming} on this point.
\end{remark}

We denote by $\f_h^{\rm i}$ the set of all interfaces and by $\f_h^{\rm b}$ the set of all boundary faces, so that $\f_h = \f_h^{\rm i} \cup \f_h^{\rm b}$. The length of a face $F\in \f_h$ is denoted by $h_F$. For any mesh element $T \in \t_h$, $\f_T$ is the set of faces that lie on $\partial T$ and, for any $F\in \f_T$, $\normal_{TF}$ is the unit normal to $F$ pointing out of $T$. Symmetrically, for any $F \in \f_h$, $\t_F$  is the set containing the mesh elements sharing the face $F$ (two if $F$ is an interface, one if $F$ is a boundary face).

To account for the presence of the fracture, we make the following
\begin{assumption}[Geometric compliance with the fracture]\label{ass:geom.compl}
  The mesh is compliant with the fracture, i.e., there exists a subset $\f_h^\Gamma\subset\f_h^{\rm i}$ such that $\overline{\Gamma} = \bigcup_{F\in\f_h^\Gamma}\overline{F}.$
  As a result, $\f_h^\Gamma$ is a (1-dimensional) mesh of the fracture.
\end{assumption}
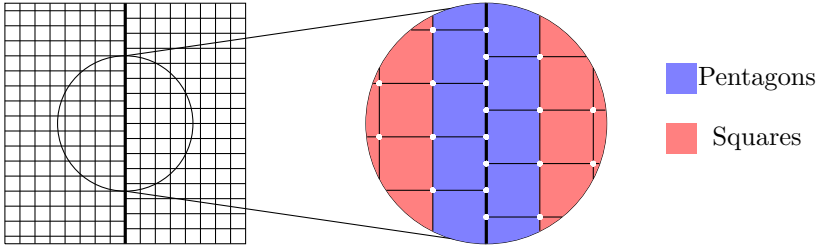
\begin{figure}\centering
\begin{tikzpicture}[scale=0.8]
\draw (0,0) -- (4,0) -- (4,4) -- (0,4) -- (0,0);
\draw (0.25,0) -- (0.25,4);
\draw (0.5,0) -- (0.5,4);
\draw (0.75,0) -- (0.75,4);
\draw (1,0) -- (1,4);
\draw (1.25,0) -- (1.25,4);
\draw (1.5,0) -- (1.5,4);
\draw (1.75,0) -- (1.75,4);
\draw [very thick] (2,0) -- (2,4);
\draw (2.25,0) -- (2.25,4);
\draw (2.5,0) -- (2.5,4);
\draw (2.75,0) -- (2.75,4);
\draw (3,0) -- (3,4);
\draw (3.25,0) -- (3.25,4);
\draw (3.5,0) -- (3.5,4);
\draw (3.75,0) -- (3.75,4);

\draw (0,0.125) -- (2,0.125);
\draw (0,0.375) -- (2,0.375);
\draw (0,0.625) -- (2,0.625);
\draw (0,0.875) -- (2,0.875);
\draw (0,1.125) -- (2,1.125);
\draw (0,1.375) -- (2,1.375);
\draw (0,1.625) -- (2,1.625);
\draw (0,1.875) -- (2,1.875);
\draw (0,2.125) -- (2,2.125);
\draw (0,2.375) -- (2,2.375);
\draw (0,2.625) -- (2,2.625);
\draw (0,2.875) -- (2,2.875);
\draw (0,3.125) -- (2,3.125);
\draw (0,3.375) -- (2,3.375);
\draw (0,3.625) -- (2,3.625);
\draw (0,3.875) -- (2,3.875);

\draw (2,0.25) -- (4,0.25);
\draw (2,0.5) -- (4,0.5);
\draw (2,0.75) -- (4,0.75);
\draw (2,1) -- (4,1);
\draw (2,1.25) -- (4,1.25);
\draw (2,1.5) -- (4,1.5);
\draw (2,1.75) -- (4,1.75);
\draw (2,2) -- (4,2);
\draw (2,2.25) -- (4,2.25);
\draw (2,2.5) -- (4,2.5);
\draw (2,2.75) -- (4,2.75);
\draw (2,3) -- (4,3);
\draw (2,3.25) -- (4,3.25);
\draw (2,3.5) -- (4,3.5);
\draw (2,3.75) -- (4,3.75);

\draw (2,2) circle(1.125);
\draw (8,2) circle(2);
\draw (2,3.125) -- (8,4);
\draw (2,0.875) -- (8,0);


\newcommand{\zoompentagon}{(8-2*4/9,0) rectangle (8+2*4/9,10*4/9)};
\newcommand{\zoomsquareLeft}{(8-5*4/9,0) rectangle (8-2*4/9,10*4/9)};
\newcommand{\zoomsquareRight}{(8+2*4/9,0) rectangle (8+5*4/9,10*4/9)};
\newcommand{\zoomcircle}{(8,2) circle(2)};
\begin{scope}
\clip  \zoompentagon;
\fill[blue!50] \zoomcircle;
\end{scope}
\begin{scope}
\clip  \zoomsquareLeft;
\fill[red!50] \zoomcircle;
\end{scope}
\begin{scope}
\clip  \zoomsquareRight;
\fill[red!50] \zoomcircle;
\end{scope}
\fill[blue!50] (11,2.5) -- (11.5,2.5) -- (11.5,3) -- (11,3) -- cycle;
\draw (12.5,2.75) node {Pentagons};
\fill[red!50] (11,1.5) -- (11.5,1.5) -- (11.5,2) -- (11,2) -- cycle;
\draw (12.5,1.75) node {Squares};

\draw [very thick] (8,0) -- (8,4);
\draw (8,2*4/9) -- (6.34,2*4/9);
\draw (8,4*4/9) -- (6.02,4*4/9);
\draw (8,6*4/9) -- (6.12,6*4/9);
\draw (8,8*4/9) -- (6.75,8*4/9);
\draw (8-2*4/9,0.215) -- (8-2*4/9,3.785);
\draw (8-4*4/9,1.075) -- (8-4*4/9,2.925);
\draw (8,1*4/9) -- (8+1.26,1*4/9);
\draw (8,3*4/9) -- (8+1.88,3*4/9);
\draw (8,5*4/9) -- (8+1.98,5*4/9);
\draw (8,7*4/9) -- (8+1.66,7*4/9);
\draw (8+2*4/9,0.215) -- (8+2*4/9,3.785);
\draw (8+4*4/9,1.075) -- (8+4*4/9,2.925);

\draw[white] plot[mark=*, mark size = 1pt] (8,1*4/9);
\draw[white] plot[mark=*, mark size = 1pt] (8,2*4/9);
\draw[white] plot[mark=*, mark size = 1pt] (8,3*4/9);
\draw[white] plot[mark=*, mark size = 1pt] (8,4*4/9);
\draw[white] plot[mark=*, mark size = 1pt] (8,5*4/9);
\draw[white] plot[mark=*, mark size = 1pt] (8,6*4/9);
\draw[white] plot[mark=*, mark size = 1pt] (8,7*4/9);
\draw[white] plot[mark=*, mark size = 1pt] (8,8*4/9);
\draw[white] plot[mark=*, mark size = 1pt] (8-2*4/9,2*4/9);
\draw[white] plot[mark=*, mark size = 1pt] (8-2*4/9,4*4/9);
\draw[white] plot[mark=*, mark size = 1pt] (8-2*4/9,6*4/9);
\draw[white] plot[mark=*, mark size = 1pt] (8-2*4/9,8*4/9);
\draw[white] plot[mark=*, mark size = 1pt] (8-4*4/9,4*4/9);
\draw[white] plot[mark=*, mark size = 1pt] (8-4*4/9,6*4/9);
\draw[white] plot[mark=*, mark size = 1pt] (8+2*4/9,1*4/9);
\draw[white] plot[mark=*, mark size = 1pt] (8+2*4/9,3*4/9);
\draw[white] plot[mark=*, mark size = 1pt] (8+2*4/9,5*4/9);
\draw[white] plot[mark=*, mark size = 1pt] (8+2*4/9,7*4/9);
\draw[white] plot[mark=*, mark size = 1pt] (8+4*4/9,3*4/9);
\draw[white] plot[mark=*, mark size = 1pt] (8+4*4/9,5*4/9);
\end{tikzpicture}
\caption{Treatment of nonconforming fracture discretizations.\label{fig:nonconforming.fracture}}
\end{figure}
\begin{remark}[Polygonal meshes and geometric compliance with the fracture]\label{rem:nonconforming}
  Fulfilling Assumption~\ref{ass:geom.compl} does not pose particular problems in the context of polygonal methods, even when the fracture discretization is nonconforming in the classical sense.
  Consider, e.g., the situation illustrated in Figure~\ref{fig:nonconforming.fracture}, where the fracture lies at the intersection of two nonmatching Cartesian submeshes.
  In this case, no special treatment is required provided the mesh elements in contact with the fracture are treated as pentagons with two coplanar faces instead of rectangles.
  This is possible since, as already pointed out, the set of mesh faces $\f_h$ need not coincide with the set of polygonal edges of $\t_h$.
\end{remark}
The set of vertices of the fracture is denoted by ${\cal V}_h$ and, for all $F\in\f_h^\Gamma$, we denote by ${\cal V}_F$ the vertices of $F$.
For all $F\in\f_h^\Gamma$ and all $V\in{\cal V}_F$, $\boldsymbol{\tau}_{FV}$ denotes the unit vector tangent to the fracture and oriented so that it points out of $F$.
Finally, ${\cal V}_h^{\rm D}$ is the set containing the points in $\partial\Gamma^{\rm D}$.

To avoid dealing with jumps of the problem data inside mesh elements, as well as on boundary and fracture faces, we additionally make the following
\begin{assumption}[Compliance with the problem data]\label{ass:data.compl}
  The mesh is compliant with the data, i.e., the following conditions hold:
  \begin{enumerate}[(i)]
  \item {\em Compliance with the bulk permeability.} For each mesh element $T\in\t_h$, there exists a unique sudomain $\omega_\bulk\in{\cal P}_\bulk$ (with ${\cal P}_\bulk$ partition introduced in Section~\ref{sec:setting:strong:bulk}) such that $T\subset\omega_\bulk$;
  \item {\em Compliance with the fracture thickness, normal, and tangential permeabilities.} For each fracture face $F\in\f_h^\Gamma$, there is a unique subdomain $\omega_\Gamma\in{\cal P}_{\Gamma}$ (with ${\cal P}_{\Gamma}$ partition introduced in Section~\ref{sec:setting:strong:fracture}) such that $F\subset\omega_\Gamma$;
  \item {\em Compliance with the boundary conditions.} There exist subsets $\f_h^{\rm D}$ and $\f_h^{\rm N}$ of $\f_h^{\rm b}$ such that $\overline{\partial\Omega_\bulk^{\rm N}}=\bigcup_{F\in\f_h^{\rm N}}\overline{F}$ and $\overline{\partial\Omega_\bulk^{\rm D}}=\bigcup_{F\in\f_h^{\rm D}}\overline{F}$.
  \end{enumerate}
\end{assumption}

For the $h$-convergence analysis, one needs to make assumptions on how the mesh is refined.
The notion of geometric regularity for polygonal meshes is, however, more subtle than for standard meshes.
To formulate it, we assume the existence of a matching simplicial submesh, meaning that there is a conforming triangulation $\mathfrak{T}_h$ of the domain such that each mesh element $T\in\t_h$ is decomposed into a finite number of triangles from $\mathfrak{T}_h$, and each mesh face $F\in\f_h$ is decomposed into a finite number of edges from the skeleton of $\mathfrak{T}_h$.
We denote by $\varrho\in(0,1)$ the regularity parameter such that (i) for any triangle $S\in\mathfrak{T}_h$ of diameter $h_S$ and inradius $r_S$, $\varrho h_S\le r_S$ and (ii) for any mesh element $T\in\t_h$ and any triangle $S\in\mathfrak{T}_h$ such that $S\subset T$, $\varrho h_T \le h_S$.
When considering $h$-refined mesh sequences, $\varrho$ should remain uniformly bounded away from zero.
We stress that the matching triangular submesh is merely a theoretical tool, and need not be constructed in practice.

\subsection{Local polynomial spaces and projectors}

Let an integer $l \geq 0$ be fixed, and let $X$ be a mesh element or face.
We denote by $\mathbb{P}^l(X)$ the space spanned by the restriction to $X$ of two-variate polynomials of total degree up to $l$, and define the $L^2$-orthogonal projector $\pi_X^l : L^1(X)\rightarrow\mathbb{P}^l(X)$ such that, for all $v\in L^1(X)$, $\pi_X^l v$ solves
\begin{equation}\label{def:lproj}
  (\pi_X^l v - v,w)_X = 0 \qquad \forall w\in\mathbb{P}^l(X).
\end{equation}
By the Riesz representation theorem in $\mathbb{P}^l(X)$ for the $L^2$-inner product, this defines $\pi_X^lv$ uniquely.

It has been proved in~\cite[Lemmas 1.58 and 1.59]{Di-Pietro.Ern:12} that the $L^2$-orthogonal projector on mesh elements has optimal approximation properties:
For all $s\in\{0,\ldots,l+1\}$, all $T\in\t_h$, and all $v\in H^s(T)$,
\begin{subequations}  
  \begin{equation}
    \label{eq:approx:lproj}
    |v-\pi_T^lv|_{H^m(T)}\le C h_T^{s-m}|v|_{H^s(T)} \qquad\forall m\in\{0,\ldots,s\},    
  \end{equation}
  and, if $s\ge 1$,
  \begin{equation}
    |v-\pi_T^lv|_{H^m(\f_T)}\le C h_T^{s-m-\nicefrac12}|v|_{H^s(T)} \qquad\forall m\in\{0,\ldots,s-1\},
    \label{eq:approx.trace:lproj}
  \end{equation}
\end{subequations}
with real number $C>0$ only depending on $\varrho$, $l$, $s$, and $m$, and $H^m(\f_T)$ spanned by the functions on $\partial T$ that are in $H^m(F)$ for all $F\in\f_T$.
More general $W^{s,p}$-approximation results for the $L^2$-orthogonal projector can be found in~\cite{Di-Pietro.Droniou:16}; see also~\cite{Di-Pietro.Droniou.bis:16} concerning projectors on local polynomial spaces.


\section{The Hybrid High-Order method}\label{sec:hho}

In this section we illustrate the local constructions in the bulk and in the fracture on which the HHO method hinges, formulate the discrete problem, and state the main results.

\subsection{Local construction in the bulk}\label{sec:setting:spaces}

We present here the key ingredients to discretize the bulk-based terms in problem \eqref{eq:weak}. First, we introduce the local DOF spaces for the bulk-based flux and pressure unknowns. Then, we define local divergence and flux reconstruction operators obtained from local DOFs.

In this section, we work on a fixed mesh element $T\in\t_{h}$, and denote by $\boldsymbol{K}_T \coloneq \boldsymbol{K}_{|T}\in\mathbb{P}^0(T)^{2\times 2}$ the (constant) restriction of the bulk permeability tensor to the element $T$.
We also introduce the local anisotropy ratio
\begin{equation}\label{eq:varrho.BT}
  \varrho_{\bulk,T} \coloneq \frac{\uKBT}{\lKBT},
\end{equation}
where $\uKBT$ and $\lKBT$ denote, respectively, the largest and smallest eigenvalue of $\boldsymbol{K}_T$.
In the error estimate of Theorem~\ref{thm:energyerror}, we will explicitly track the dependence of the constants on $\rho_{\bulk,T}$ in order to assess the robustness of our method with respect to the anisotropy of the diffusion coefficient.

\subsubsection{Local bulk unknowns}\label{sec:bulk.DOFs}

\begin{figure}\center
  \includegraphics{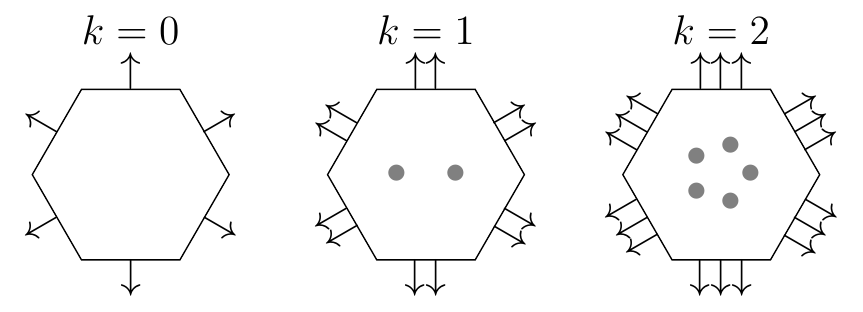}
  \caption{Local DOF space $\underline{\boldsymbol{U}}_T^k$ for a hexagonal mesh element and $k\in\{0,1,2\}$.\label{fig:dofs}}
\end{figure}
For any integer $l\ge 0$, set $\boldsymbol{U}_T^l \coloneq \boldsymbol{K}_T\nabla\mathbb{P}^l(T)$. The local DOF spaces for the bulk flux and pressure are given by (see Figure~\ref{fig:dofs})
\begin{equation}\label{eq:UT.PT}
  \underline{\boldsymbol{U}}_T^k \coloneq \boldsymbol{U}_T^k\times\left(\bigtimes_{F\in\f_T}\mathbb{P}^k(F)\right),\qquad
  P_{\bulk,T}^k \coloneq \mathbb{P}^k(T).
\end{equation}
Notice that, for $k=0$, we have $\boldsymbol{U}_T^0=\boldsymbol{K}_T\nabla\mathbb{P}^0(T)=\{\boldsymbol{0}\}$, expressing the fact that element-based flux DOFs are not needed.
A generic element $\underline{\boldsymbol{v}}_T\in\underline{\boldsymbol{U}}_T^k$ is decomposed as $\underline{\boldsymbol{v}}_T = (\boldsymbol{v}_T, (v_{TF})_{F\in\f_T})$. We define on $\underline{\boldsymbol{U}}_T^k$ and on $P_{\bulk,T}^k$, respectively, the norms $\|{\cdot}\|_{\boldsymbol{U},T}$ and $\|{\cdot}\|_{\bulk,T}$ such that, for all $\underline{\boldsymbol{v}}_T\in\underline{\boldsymbol{U}}_T^k$ and all $q_T\in P_{\bulk,T}^k$,
\begin{equation}\label{eq:normUT.normPT}
  \|\underline{\boldsymbol{v}}_T\|_{\boldsymbol{U},T}^2 \coloneq (\uKBT)^{-1}
  \left(
  \|\boldsymbol{v}_T\|_T^2 + \sum_{F\in\f_T} h_F\|v_{TF}\|_F^2
  \right),\qquad \|q_T\|_{\bulk,T} \coloneq \|q_T\|_{T},
\end{equation}
where we remind the reader that $\uKBT$ denotes the largest eigenvalue of the two-by-two matrix $\boldsymbol{K}_T$, see Section \ref{sec:setting:spaces}.
We define the local interpolation operator $\underline{\boldsymbol{I}}_T^k : H^1(T)^2 \rightarrow \underline{\boldsymbol{U}}_T^k$ such that, for all $\boldsymbol{v} \in H^1(T)^2$,
\begin{equation}\label{eq:IT}
\underline{\boldsymbol{I}}_T^k\boldsymbol{v} \coloneq (\boldsymbol{K}_T\nabla y_T, (\pi_F^k(\boldsymbol{v}\cdot\normal_{TF}))_{F\in\f_T}),
\end{equation}
where $y_T\in\mathbb{P}^k(T)$ is the solution (defined up to an additive constant) of the following Neumann problem:
\begin{equation}\label{def:interpolatorbulk:neumann}
(\boldsymbol{K}_T\nabla y_T,\nabla q_T)_T = (\boldsymbol{v},\nabla q_T)_T\qquad \forall q_T\in\mathbb{P}^{k}(T).
\end{equation}

\begin{remark}[Domain of the interpolator]\label{rem:U^+(T)}
The regularity in $H^1(T)^2$ beyond $\boldsymbol{H}(\Hdiv;T)$ is classically needed for the face interpolators to be well-defined; see, e.g.,~\cite[Section~2.5.1]{Boffi.Brezzi.ea:13} for further insight into this point.
\end{remark}

\subsubsection{Local divergence reconstruction operator}

We define the local divergence reconstruction operator $\DT : \underline{\boldsymbol{U}}_T^{k} \rightarrow P_{\bulk,T}^k$ such that, for all $\underline{\boldsymbol{v}}_T = (\boldsymbol{v}_T, (v_{TF})_{F\in\f_T}) \in \underline{\boldsymbol{U}}_T^{k}$, $\DT \underline{\boldsymbol{v}}_T$ solves
\begin{align}\label{eq:DT}
	(\DT \underline{\boldsymbol{v}}_T, q_T)_T = -(\boldsymbol{v}_T,\nabla q_T)_T + \sum_{F\in\f_T} (v_{TF},q_T)_F\qquad\forall q_T \in P_{\bulk,T}^k.
\end{align}
By the Riesz representation theorem in $P_{\bulk,T}^k$ for the $L^2$-inner product, this defines the divergence reconstruction uniquely.
The right-hand side of~\eqref{eq:DT} is designed to resemble an integration by parts formula where the role of the function represented by $\underline{\boldsymbol{v}}_T$ is played by element-based DOFs in volumetric terms and face-based DOFs in boundary terms.
With this choice, the following commuting property holds (see \cite[Lemma 2]{Di-Pietro.Ern:16}): For all $\boldsymbol{v}\in H^1(T)^2$,
\begin{align}\label{lem:commutdivbulk}
\DT\underline{\boldsymbol{I}}_T^k\boldsymbol{v} = \pi_T^k(\nabla\cdot \boldsymbol{v}).
\end{align}
We also note the following inverse inequality, obtained from~\eqref{eq:DT} setting $q_{T}=\DT\underline{\boldsymbol{v}}_{T}$ and using Cauchy--Schwarz and discrete inverse and trace inequalities (see \cite[Lemma 8]{Di-Pietro.Ern:16} for further details):
There is a real number $C>0$ independent of $h$ and of $T$, but depending on $\varrho$ and $k$, such that, for all $\underline{\boldsymbol{v}}_T\in\underline{\boldsymbol{U}}_T^k$,
\begin{equation}\label{proof:stabDiv}
h_T\|\DT \underline{\boldsymbol{v}}_T\|_T \le C \uKBT^{\nicefrac12}\|\underline{\boldsymbol{v}}_T\|_{\boldsymbol{U},T}.
\end{equation}

\subsubsection{Local flux reconstruction operator and permeability-weighted local product}

We next define the local discrete flux operator $\FT : \underline{\boldsymbol{U}}_T^k \rightarrow \boldsymbol{U}_T^{k+1}$ such that, for all $\underline{\boldsymbol{v}}_T = (\boldsymbol{v}_T, (v_{TF})_{F\in\f_T}) \in \underline{\boldsymbol{U}}_T^{k}$, $\FT\underline{\boldsymbol{v}}_T$ solves
\begin{equation}\label{def:operatorfluxbulk}
  (\FT\underline{\boldsymbol{v}}_T,\nabla w_T)_T
  = -(\DT\underline{\boldsymbol{v}}_T, w_T)_T + \sum_{F\in\f_T}(v_{TF},w_T)_F\qquad
  \forall w_T\in\mathbb{P}^{k+1}(T).
\end{equation}
By the Riesz representation theorem in $\boldsymbol{U}_T^{k+1}$ for the $(\boldsymbol{K}_{T}^{-1}\cdot,\cdot)_{T}$-inner product, this defines the flux reconstruction uniquely.
Also in this case, the right-hand side is designed so as to resemble an integration by parts formula where the role of the divergence of the function represented by $\underline{\boldsymbol{v}}_T$ is played by $\DT\underline{\boldsymbol{v}}_T$, while its normal traces are replaced by boundary DOFs.

We now have all the ingredients required to define the permeability-weighted local product $m_T : \boldsymbol{\underline{U}}_T^k\times\boldsymbol{\underline{U}}_T^k\rightarrow\Real$ such that
\begin{equation}\label{eq:mT}
  m_T(\underline{\boldsymbol{u}}_T,\underline{\boldsymbol{v}}_T)
  \coloneq (\boldsymbol{K}_T^{-1}\FT\underline{\boldsymbol{u}}_T,\FT\underline{\boldsymbol{v}}_T)_T + J_T(\underline{\boldsymbol{u}}_T,\underline{\boldsymbol{v}}_T),
\end{equation}
where the first term is the usual Galerkin contribution responsible for consistency, while $J_T : \underline{\boldsymbol{U}}_T^k\times\underline{\boldsymbol{U}}_T^k \rightarrow \Real$ is a stabilization bilinear form such that, letting $\mu_{TF} \coloneq \boldsymbol{K}_T \normal_{TF} \cdot \normal_{TF}$ for all $F\in\f_T$,
\begin{align*}
J_T(\boldsymbol{\underline{u}}_T,\boldsymbol{\underline{v}}_T) \coloneq \sum_{F\in\f_T} \frac{h_F}{\mu_{TF}}(\FT\boldsymbol{\underline{u}}_T\cdot\normal_{TF} - u_{TF},\FT\boldsymbol{\underline{v}}_T\cdot\normal_{TF} - v_{TF})_F.
\end{align*}
The role of $J_T$ is to ensure the existence of a real number $\eta_m>0$ independent of $h$, $T$, and $\boldsymbol{K}_T$, but possibly depending on $\varrho$ and $k$, such that, for all $\underline{\boldsymbol{v}}_T\in\underline{\boldsymbol{U}}_T^k$,
\begin{equation}\label{proof:equivMTST}
  \eta_m^{-1}\|\underline{\boldsymbol{v}}_T\|_{\boldsymbol{U},T}^2
  \le \|\underline{\boldsymbol{v}}_T\|_{m,T}^2\coloneq m_T(\underline{\boldsymbol{v}}_T,\underline{\boldsymbol{v}}_T)
  \le\eta_m \rho_{\bulk,T}\|\underline{\boldsymbol{v}}_T\|_{\boldsymbol{U},T}^2,
\end{equation}
with norm $\|{\cdot}\|_{\boldsymbol{U},T}$ defined by \eqref{eq:normUT.normPT}; see \cite[Lemma 4]{Di-Pietro.Ern:16} for a proof.
Additionally, we note the following consistency property for $J_T$ proved in~\cite[Lemma~9]{Di-Pietro.Ern:16}:
There is a real number $C>0$ independent of $h$, $T$, and $\boldsymbol{K}_T$, but possibly depending on $\varrho$ and $k$, such that, for all $\boldsymbol{v}=\boldsymbol{K}_T\nabla q$ with $q\in H^{k+2}(T)$,
\begin{equation}\label{eq:consistency:JT}
  J_T(\underline{\boldsymbol{I}}_T^k\boldsymbol{v},\underline{\boldsymbol{I}}_T^k\boldsymbol{v})^{\nicefrac12}
  \le C \varrho_{\bulk,T}^{\nicefrac12} \uKB^{\nicefrac12} h_T^{k+1}|q|_{H^{k+2}(T)}.
\end{equation}

\subsection{Local construction in the fracture}\label{sec:discrete.fracture}

We now focus on the discretization of the fracture-based terms in problem \eqref{eq:weak}. First, we define the local space of fracture pressure DOFs, then a local pressure reconstruction operator inspired by a local integration by parts formula.
Based on this operator,  we formulate a local discrete tangential diffusive bilinear form.
Throughout this section, we work on a fixed fracture face $F\in\f_h^\Gamma$ and we let, for the sake of brevity, $K_F\coloneq(K_\Gamma)_{|F}\in\mathbb{P}^0(F)$ with $K_\Gamma$ defined in Section~\ref{sec:setting:strong:fracture}.

\subsubsection{Local fracture unknowns}

Set $\mathbb{P}(V) \coloneq \vect\{1\}$ for all $V\in\mathcal{V}_F$. The local space of DOFs for the fracture pressure is
\begin{align*}
\underline{P}_{\Gamma,F}^k \coloneq \mathbb{P}(F)^k\times\left(\bigtimes_{V\in\mathcal{V}_F}\mathbb{P}(V)\right).
\end{align*}
In what follows, a generic element $\underline{q}_F^\Gamma\in\underline{P}_{\Gamma,F}^k$ is decomposed as $\underline{q}_F^\Gamma = (q_F^\Gamma, (q_{V}^\Gamma)_{V\in\mathcal{V}_F})$. We define on $\underline{P}_{\Gamma,F}^k$ the seminorm $\|{\cdot}\|_{\Gamma,F}$ such that, for all $\underline{q}_{F}^\Gamma\in \underline{P}_{\Gamma,F}^k$,
\begin{equation*} 
\|\underline{q}_{F}^\Gamma\|_{\Gamma,F}^2 \coloneq \|K_F^{\nicefrac12}\nabla_\tau q_F^\Gamma\|_F^2 + \sum_{V\in\mathcal{V}_F}\dfrac{K_F}{h_F}(q_F-q_V)^2(V).
\end{equation*}
We also introduce the local interpolation operator $\underline{I}_F^k : H^1(F) \rightarrow \underline{P}_{\Gamma,F}^k$ such that, for all $q\in H^1(F)$,
\begin{align*}
\underline{I}_F^k q \coloneq (\pi_F^k q, (q(V))_{V\in\mathcal{V}_F}).
\end{align*}

\subsubsection{Pressure reconstruction operator and local tangential diffusive bilinear form}

We define the local pressure reconstruction operator $\rF : \underline{P}_{\Gamma,F}^k \rightarrow \mathbb{P}^{k+1}(F)$ such that, for all $\underline{q}_F^\Gamma = (q_F^\Gamma,(q_V^\Gamma)_{V\in\mathcal{V}_F}) \in \underline{P}_{\Gamma,F}^k$, $\rF \underline{q}_F^\Gamma$ solves
\begin{equation*} 
  (K_F\nabla_\tau \rF \underline{q}_F^\Gamma, \nabla_\tau w_F^\Gamma)_F = -(q_F^\Gamma,\nabla_\tau\cdot(K_F\nabla_\tau w_F^\Gamma))_F + \sum_{V\in\mathcal{V}_F} q_V^\Gamma(K_F\nabla_\tau w_F^\Gamma\cdot \boldsymbol{\tau}_{FV})(V),  
\end{equation*}
for all $w_F^\Gamma\in\mathbb{P}^{k+1}(F)$. By the Riesz representation theorem in $\nabla\mathbb{P}^{k+1}(F)$ for the $(K_{F}\cdot,\cdot)_{F}$-inner product, this relation defines a unique element $\nabla_\tau \rF \underline{q}_F^\Gamma$, hence a polynomial $\rF \underline{q}_F^\Gamma\in\mathbb{P}^{k+1}(F)$ up to an additive constant.
This constant is fixed by additionally imposing that
\begin{equation*} 
  (\rF \underline{q}_F^\Gamma - q_F^\Gamma,1)_F = 0.
\end{equation*}
We can now define the local tangential diffusive bilinear form $d_F : \underline{P}_{\Gamma,F}^k\times\underline{P}_{\Gamma,F}^k \rightarrow \Real$ such that
\begin{equation*}
  d_F(\underline{p}_F^\Gamma,\underline{q}_F^\Gamma) \coloneq (K_F\nabla_\tau \rF\underline{p}_F^\Gamma,\nabla_\tau \rF\underline{q}_F^\Gamma)_F + j_F(\underline{p}_F^\Gamma,\underline{q}_F^\Gamma),
\end{equation*}
where the first term is the standard Galerkin contribution responsible for consistency, while $j_F : \underline{P}_{\Gamma,F}^k\times\underline{P}_{\Gamma,F}^k \rightarrow \Real$ is the stabilization bilinear form such that
\begin{equation*}
j_F(\underline{p}_F^\Gamma,\underline{q}_F^\Gamma) \coloneq \sum_{V\in\mathcal{V}_F} \frac{K_{F}}{h_F}(R_F^{k+1}\underline{p}_F^\Gamma(V) - p_V^\Gamma)(R_F^{k+1}\underline{q}_F^\Gamma(V) - q_V^\Gamma),
\end{equation*}
with $R_F^{k+1} : \underline{P}_{\Gamma,F}^k \rightarrow \mathbb{P}^{k+1}(F)$ such that, for all $\underline{q}_F^\Gamma\in\underline{P}_{\Gamma,F}^k$,
$R_F^{k+1}\underline{q}_F^\Gamma \coloneq q_F^\Gamma + (\rF\underline{q}_F^\Gamma - \pi_F^k \rF\underline{q}_F^\Gamma)$.
The role of $j_{T}$ is to ensure stability and boundedness, expressed by the existence of a real number $\eta_d>0$ independent of $h$, $F$, and of $K_F$, but possibly depending on $k$ and $\varrho$, such that, for all $\underline{q}_F^\Gamma\in\underline{P}_{\Gamma,F}^k$, the following holds (see~\cite[Lemma 4]{Di-Pietro.Ern.ea:14}):
\begin{equation}\label{lem:defstabmF}
  \eta_d^{-1}\|\underline{q}_F^\Gamma\|_{\Gamma,F}^2
  \le 
  d_F(\underline{q}_F^\Gamma,\underline{q}_F^\Gamma)
  \le \eta_d \|\underline{q}_F^\Gamma\|_{\Gamma,F}^2.
\end{equation}


\subsection{The discrete problem}\label{sec:discrete}

We define the global discrete spaces together with the corresponding interpolators and norms, formulate the discrete problem, and state the main results.

\subsubsection{Global discrete spaces}\label{sec:discrete:global}

We define the following global spaces of fully discontinuous bulk flux and pressure DOFs:
$$
\widecheck{\underline{\boldsymbol{U}}}_h^k \coloneq \bigtimes_{T\in\t_h}\underline{\boldsymbol{U}}_T^k, \qquad P_{\bulk,h}^k \coloneq \bigtimes_{T\in\t_h}P_{\bulk,T}^k,
$$
with local spaces $\underline{\boldsymbol{U}}_T^k$ and $P_{\bulk,T}^k$ defined by~\eqref{eq:UT.PT}.
We will also need the following subspace of $\widecheck{\underline{\boldsymbol{U}}}_h^k$ that incorporates (i) the continuity of flux unknowns at each interface $F\in\f_h^{\rm i}\setminus\f_h^\Gamma$ not included in the fracture and (ii) the strongly enforced homogeneous Neumann boundary condition on $\partial\Omega_\bulk^{\rm N}$:
\begin{equation}\label{eq:Uh0}
  \underline{\boldsymbol{U}}_{h,0}^k \coloneq \{\underline{\boldsymbol{v}}_h \in \widecheck{\underline{\boldsymbol{U}}}_h^k ~|~ [\![\underline{\boldsymbol{v}}_{h}]\!]_{F}  = 0 ~\forall F\in\f_h^{\text{i}}\setminus\f_h^{\Gamma} \text{ and } v_F = 0 ~\forall F\in\f_{h}^{\rm N}\},
\end{equation}
where, for all $F\in\f_h^{\rm b}$, we have set $v_F\coloneq v_{TF}$ with $T$ denoting the unique mesh element such that $F\in\f_T$, while, for all $F\in\f_h^{\rm i}$ with $F\subset\partial T_1\cap\partial T_2$ for distinct mesh elements $T_1,T_2\in\t_h$, the jump operator is such that
$$
[\![\underline{\boldsymbol{v}}_{h}]\!]_{F}\coloneq v_{T_1F} + v_{T_2F}.
$$
Notice that this quantity is the discrete counterpart of the jump of the normal flux component since, for $i\in\{1,2\}$, $v_{T_iF}$ can be interpreted as the normal flux exiting $T_i$.

We also define the global space of fracture-based pressure unknowns and its subspace with strongly enforced homogeneous Dirichlet boundary condition on $\partial\Gamma^{\rm D}$ as follows:
\begin{align*}
  \underline{P}_{\Gamma,h}^k \coloneq \Bigg(
  \bigtimes_{F\in\f_h^\Gamma}\mathbb{P}^k(F)
  \Bigg)\times\Bigg(
  \bigtimes_{V\in\mathcal{V}_{h}}\mathbb{P}(V)
  \Bigg), \quad
  \underline{P}_{\Gamma,h,0}^k \coloneq \{ \underline{q}_{h}^\Gamma \in \underline{P}_{\Gamma,h}^k ~|~ q_V^\Gamma = 0~\forall V \in \mathcal{V}_{h}^\text{D} \}.
\end{align*}
A generic element $\underline{q}_h^\Gamma$ of $\underline{P}_{\Gamma,h}^k$ is decomposed as $\underline{q}_h^\Gamma=( (q_F)_{F\in\f_h^\Gamma}, (q_V)_{V\in\mathcal{V}_h})$ and, for all $F\in\f_h^\Gamma$, we denote by  $\underline{q}_F^\Gamma=(q_F^\Gamma,(q_V^\Gamma)_{v\in\mathcal{V}_F})$ its restriction to $\underline{P}_{\Gamma,F}^k$.

\subsubsection{Discrete norms and interpolators}\label{sec:discrete:global:norms}

We equip the DOF spaces $\widecheck{\underline{\boldsymbol{U}}}_h^k$, $P_{\bulk,h}^k$, and $\underline{P}_{\Gamma,h}^k$ respectively, with the norms $\|{\cdot}\|_{\boldsymbol{U},\xi,h}$ and $\|{\cdot}\|_{\bulk,h}$, and the seminorm $\|{\cdot}\|_{\Gamma,h}$ such that for all $\underline{\boldsymbol{v}}_h\in \underline{\boldsymbol{U}}_h^k$, all $q_h\in P_{\bulk,h}^k$, and all $\underline{q}_h^\Gamma\in \underline{P}_{\Gamma,h}^k$,
\begin{equation*}
  \begin{gathered}
    \|\underline{\boldsymbol{v}}_h\|_{\boldsymbol{U},\xi,h}^2 \coloneq \hspace*{-3px}\sum_{T\in\t_{h}} \|\underline{\boldsymbol{v}}_T\|_{\boldsymbol{U},T}^2 + |\underline{\boldsymbol{v}}_h|_{\xi,h}^2,\quad
    |\underline{\boldsymbol{v}}_h|_{\xi,h}^2\coloneq\hspace*{-3px}\sum_{F\in\f^\Gamma_{h}} \hspace*{-2px}\Big(\lambda_F^\xi\|[\![\underline{\boldsymbol{v}}_h]\!]_F \|_F^2 + \lambda_F\|\{\!\{\underline{\boldsymbol{v}}_h\}\!\}_F \|_F^2 \Big),
    \\
    \|q_h\|_{\bulk,h}^2 \coloneq \sum_{T\in\t_{h}} \|q_T\|_{\bulk,T}^2,\quad \|\underline{q}_h^\Gamma\|_{\Gamma,h}^2 \coloneq \sum_{F\in\f_h^\Gamma} \|\underline{q}_F^\Gamma\|_{\Gamma,F}^2,
  \end{gathered}
\end{equation*}
where, for the sake of brevity, we have set $\lambda_F\coloneq(\lambda_\Gamma)_{|F}$ and $\lambda_F^\xi\coloneq(\lambda_\Gamma^\xi)_{|F}$ (see~\eqref{eq:lambda.lambdaxi} for the definition of $\lambda_\Gamma$ and $\lambda_\Gamma^\xi$), and we have defined the average operator such that, for all $F\in\f_h^\Gamma$ and all $\underline{\boldsymbol{v}}_h\in\widecheck{\underline{\boldsymbol{U}}}_h^k$,
$$
  \{\!\{ \underline{\boldsymbol{v}}_{h} \}\!\}_F \coloneq \frac12\sum_{T\in\t_F}v_{TF} (\normal_{TF}\cdot\normal_\Gamma).
$$
Using the arguments of~\cite[Proposition 5]{Di-Pietro.Ern:15}, it can be proved that $\|{\cdot}\|_{\Gamma,h}$ is a norm on $\underline{P}_{\Gamma,h,0}^k$.

Let now $H^1(\t_h)^2$ denote the space spanned by vector-valued functions whose restriction to each mesh element $T\in\t_h$ lies in $H^1(T)^2$. 
We define the global interpolators $\underline{\boldsymbol{I}}_{h}^k :H^1(\t_h)^2 \rightarrow \widecheck{\underline{\boldsymbol{U}}}_h^k$ and $\underline{{I}}_{h}^k : H^1(\Gamma) \rightarrow \underline{P}_{\Gamma,h}^k$ such that, for all $\boldsymbol{v}\in H^1(\t_h)^2$ and all $q\in H^1(\Gamma)$,
\begin{equation}\label{def:globalinterpoperator}
  \underline{\boldsymbol{I}}_{h}^k \boldsymbol{v} \coloneq
  \big( \underline{\boldsymbol{I}}_{T}^{k}\boldsymbol{v}_{|T} \big)_{T\in\t_{h}},
  \qquad
  \underline{I}_{h}^k q \coloneq
  \big( (\pi_F^k q)_{F\in\f_h^{\Gamma}}, (q(V))_{V\in\mathcal{V}_h}\big),
\end{equation}
where, for all $T\in\t_h$, the local interpolator $\underline{\boldsymbol{I}}_{T}^{k}$ is defined by~\eqref{eq:IT}.
We also denote by $\pi_h^k$ the global $L^2$-orthogonal projector on $P_{\bulk,h}^k$ such that, for all $q\in L^1(\Omega_\bulk)$,
$$
(\pi_h^k q)_{|T} \coloneq \pi_T^k q_{|T}\qquad\forall T\in\t_h.
$$

\subsubsection{Discrete problem}\label{subsec:discretprobl}

At the discrete level, the counterparts of the continuous bilinear forms defined in Section~\ref{subsec:weakform} are the bilinear forms
$a_{h}^\xi : \widecheck{\underline{\boldsymbol{U}}}_h^k\times\widecheck{\underline{\boldsymbol{U}}}_h^k \rightarrow \Real$, $b_{h} : \widecheck{\underline{\boldsymbol{U}}}_h^k\times P_{\bulk,h}^k\rightarrow \Real$, $c_{h} : \widecheck{\underline{\boldsymbol{U}}}_h^k\times \underline{P}_{\Gamma,h}^k\rightarrow \Real$, and $d_{h} : \underline{P}_{\Gamma,h}^k\times\underline{P}_{\Gamma,h}^k \rightarrow \Real$ such that
\begin{align}\label{def:ah}
  a_{h}^\xi(\underline{\boldsymbol{u}}_h,\underline{\boldsymbol{v}}_h) \coloneq &\sum_{T\in\t_{h}} m_T(\boldsymbol{\underline{u}}_T,\boldsymbol{\underline{v}}_T)\\
&+ \sum_{F\in\f_{h}^\Gamma}\left( (\lambda_F^\xi[\![\underline{\boldsymbol{u}}_{h}]\!]_F,[\![\underline{\boldsymbol{v}}_{h}]\!]_F)_F + (\lambda_F \{\!\{\underline{\boldsymbol{u}}_{h}\}\!\}_F,\{\!\{\underline{\boldsymbol{v}}_{h}\}\!\}_F)_F\right),%
  \notag\\
  b_{h}(\underline{\boldsymbol{u}}_h,p_h) &\coloneq \sum_{T\in\t_{h}} (D_T^k\underline{\boldsymbol{u}}_T, p_T)_T,\label{def:bh}\\
c_{h}(\boldsymbol{\underline{u}}_h,\underline{p}_{h}^\Gamma) &\coloneq \sum_{F\in\f_{h}^\Gamma} ([\![ \boldsymbol{\underline{u}}_h ]\!]_F,p_F^\Gamma)_F,\label{def:ch}\\
d_{h}(\underline{p}_{h}^\Gamma,\underline{q}_{h}^\Gamma) &\coloneq \sum_{F\in\f_{h}^\Gamma}d_F(\underline{p}_F^\Gamma,\underline{q}_F^\Gamma).\label{def:dh}
\end{align}
The HHO discretization of problem \eqref{eq:weak} reads : Find $(\boldsymbol{\underline{u}}_h,p_h,\underline{p}_{h,0}^\Gamma)\in\boldsymbol{\underline{U}}_{h,0}^k \times P_{\bulk,h}^k \times \underline{P}_{\Gamma,h,0}^k$ such that, for all $(\boldsymbol{\underline{v}}_h,q_h,\underline{q}_{h}^\Gamma)\in\boldsymbol{\underline{U}}_{h,0}^k \times P_{\bulk,h}^k \times \underline{P}_{\Gamma,h,0}^k$,
\begin{subequations}
  \label{eq:discret}
  \begin{alignat}{5}
  	&a_h^\xi(\underline{\boldsymbol{u}}_h,\underline{\boldsymbol{v}}_h) &- b_h(\underline{\boldsymbol{v}}_h,p_h) &+ c_{h}(\underline{\boldsymbol{v}}_h,\underline{p}_{h,0}^\Gamma) &=&-\sum_{F\in\f_h^\text{D}} (g_\bulk,v_F)_{F},   \label{eq:discret:1}
  	\\
        &b_h(\underline{\boldsymbol{u}}_h,q_h)&& &=& \sum_{T\in\t_{h}} (f,q_T)_T, \label{eq:discret:2}\\
        -&c_{h}(\underline{\boldsymbol{u}}_h,\underline{q}_{h}^\Gamma) && + d_{h}(\underline{p}_{h,0}^\Gamma, \underline{q}_{h}^\Gamma) ~&=& \sum_{F\in\f^\Gamma_{h}}(\ell_{F} f_\Gamma,q_F^\Gamma)_F - d_{h}(\underline{p}_{\mathrm{D},h}^\Gamma, \underline{q}_{h}^\Gamma), \label{eq:discret:3}
  \end{alignat}
\end{subequations}
where, for all $F\in\f_{h}^{\rm D}$, we have set $v_{F}\coloneq v_{TF}$ with $T\in\t_{h}$ unique element such that $F\subset\partial T\cap\partial\Omega$ in~\eqref{eq:discret:1}, while $\underline{p}_{{\rm D},h}^\Gamma = \big( (p_{{\rm D},F}^\Gamma)_{F\in\f_h^\Gamma}, (p_{{\rm D},V}^\Gamma)_{V\in{\cal V}_h}\big)\in \underline{P}_{\Gamma,h}^k$ is such that
\begin{align*}
  p_{{\rm D},F}^\Gamma \equiv 0\quad\forall F\in\f^\Gamma_{h},\qquad
  p_{{\rm D},V}^\Gamma = g_\Gamma(V)\quad\forall V\in\mathcal{V}_{h}^\text{D},\qquad
  p_{{\rm D},V}^\Gamma = 0\quad\forall V\in\mathcal{V}_{h}\setminus \mathcal{V}_{h}^\text{D}.
\end{align*}
The discrete fracture pressure $\underline{p}_{h}^\Gamma\in\underline{P}_{\Gamma,h}^k$ is finally computed as $\underline{p}_{h}^\Gamma = \underline{p}_{h,0}^\Gamma + \underline{p}_{{\rm D},h}^\Gamma.$

\begin{remark}[Implementation]\label{rem:implementation}
  In the practical implementation, all bulk flux DOFs and all bulk pressure DOFs up to one constant value per element can be statically condensed by solving small saddle point problems inside each element.
  This corresponds to the first static condensation procedure discussed in~\cite[Section~3.4]{Di-Pietro.Ern:16}, to which we refer the reader for further details.
\end{remark}

We next write a more compact equivalent reformulation of problem~\eqref{eq:discret}.
Define the Cartesian product space $\underline{\boldsymbol{X}}_h^k \coloneq \boldsymbol{\underline{U}}_{h,0}^k \times P_{\bulk,h}^k \times \underline{P}_{\Gamma,h,0}^k$ as well as the bilinear form $\mathcal{A}_h^{\xi} :\underline{\boldsymbol{X}}_h^k\times\underline{\boldsymbol{X}}_h^k\rightarrow \Real$ such that
\begin{equation}\label{def:Ahxi}
  \begin{aligned}
    \mathcal{A}_h^{\xi}((\boldsymbol{\underline{u}}_h,p_h,\underline{p}_h^\Gamma),(\boldsymbol{\underline{v}}_h,q_h,\underline{q}_h^\Gamma))
    &\coloneq a_h^\xi(\underline{\boldsymbol{u}}_h,\underline{\boldsymbol{v}}_h) + b_h(\underline{\boldsymbol{u}}_h,q_h) - b_h(\underline{\boldsymbol{v}}_h,p_h)
  \\
  &\qquad + c_{h}(\underline{\boldsymbol{v}}_h,\underline{p}_{h}^\Gamma) - c_{h}(\underline{\boldsymbol{u}}_h,\underline{q}_{h}^\Gamma) + d_{h}(\underline{p}_{h}^\Gamma, \underline{q}_{h}^\Gamma).
  \end{aligned}
\end{equation}
Then, problem \cref{eq:discret} is equivalent to:
Find $(\boldsymbol{\underline{u}}_h,p_h,\underline{p}_{h,0}^\Gamma)\in\underline{\boldsymbol{X}}_h^k$ such that, for all \mbox{$(\boldsymbol{\underline{v}}_h,q_h,\underline{q}_h^\Gamma)\in \underline{\boldsymbol{X}}_h^k$,}
\begin{equation}\label{eq:dicret:oneline}
  \begin{aligned}
    \mathcal{A}_h^{\xi}((\boldsymbol{\underline{u}}_h,p_h,\underline{p}_{h,0}^\Gamma),(\boldsymbol{\underline{v}}_h,q_h,\underline{q}_h^\Gamma))
    =
    \sum_{T\in\t_{h}} (f,q_T)_T&
    + \sum_{F\in\f^\Gamma_{h}}(\ell_F f_\Gamma,q_F^\Gamma)_F\\
    &- \sum_{F\in\f_h^\text{D}} (g_\bulk,v_F)_{F}- d_{h}(\underline{p}_{{\rm D},h}^\Gamma, \underline{q}_{h}^\Gamma).
    \end{aligned}
\end{equation}
\begin{remark}[Extension to three space dimensions]\label{rem:3d}
  The proposed method can be extended to the case of a three-dimensional domain with fracture corresponding to the intersection of the domain with a plane.
  The main differences are linked to the fracture terms, and can be summarized as follows:
  \begin{inparaenum}[(i)]
  \item the tangential permeability of the fracture is a uniformly elliptic, $2\times 2$ matrix-valued field instead of a scalar;
  \item the fracture is discretized by means of a two-dimensional mesh $\mathcal{F}_{h}^{\Gamma}$ composed of element faces, and vertex-based DOFs are replaced by discontinuous polynomials of degree up to $k$ on the skeleton (i.e., the union of the edges) of $\mathcal{F}_{h}^{\Gamma}$;
  \item all the terms involving pointwise evaluations at vertices are replaced by integrals on the edges of $\mathcal{F}_{h}^{\Gamma}$.
  \end{inparaenum}
  Similar stability and error estimates as in the two-dimensional case can be proved in three space dimensions.
  A difference is that the right-hand side of the error estimate will additionally depend on the local anisotropy ratio of the tangential permeability of the fracture, arguably with a power of $\nicefrac12$.
\end{remark}%

\subsection{Main results}\label{subsec:results}

In this section we report the main results of the analysis of our method, postponing the details of the proofs to Section~\ref{sec:stability:proof}.
For the sake of simplicity, we will assume that
\begin{equation}\label{eq:dirichlet.only}
  \partial\Omega_\bulk^{\rm N}=\emptyset,\quad g_\bulk\equiv 0,\qquad
  \qquad \partial\Gamma^{\rm N}=\emptyset,\quad g_\Gamma\equiv 0
\end{equation}
which means that homogeneous Dirichlet boundary conditions on the pressure are enforced on both the external boundary of the bulk region and on the boundary of the fracture.
This corresponds to the situation when the motion of the fluid is driven by the volumetric source terms $f$ in the bulk region and $f_{\Gamma}$ in the fracture.
The results illustrated below and in Section~\ref{sec:stability:proof} can be adapted to more general boundary conditions at the price of heavier notations and technicalities that we want to avoid here.

In the error estimate of Theorem~\ref{thm:energyerror} below, we track explicitly the dependence of the multiplicative constants on the following quantites and bounds thereof: the bulk permeability $\boldsymbol{K}$, the tangential fracture permeability $\kappa_\Gamma^\tau$, the normal fracture permeability $\kappa_\Gamma^n$, and the fracture thickness $\ell_\Gamma$, which we collectively refer to in the following as the \emph{problem data}.

We equip the space $\underline{\boldsymbol{X}}_h^k$ with the norm $\|{\cdot}\|_{\boldsymbol{X},h}$ such that, for all $(\boldsymbol{\underline{v}}_h,q_h,\underline{q}_h^\Gamma)\in\underline{\boldsymbol{X}}_h^k$,
\begin{align}\label{def:Xnorm}
\|(\boldsymbol{\underline{v}}_h,q_h,\underline{q}_h^\Gamma)\|_{\boldsymbol{X},h}^2 \coloneq \|\boldsymbol{\underline{v}}_h\|_{\boldsymbol{U},\xi,h}^2 + \|q_h\|_{\bulk,h}^2 + \|\underline{q}_h^\Gamma\|_{\Gamma,h}^2.
\end{align}

\begin{theorem}[Stability]\label{thm:stability}
  Assume~\eqref{eq:dirichlet.only}.
  Then, there exists a real number $\gamma>0$ independent of $h$, but possibly depending on the problem geometry, on $\varrho$, $k$, and on the problem data, such that, for all $\underline{\boldsymbol{z}}_h\in\underline{\boldsymbol{X}}_h^k$,
  \begin{equation}\label{thm:stability:eq}
    \|\underline{\boldsymbol{z}}_h\|_{\boldsymbol{X},h}
    \le\gamma
    \sup_{\underline{\boldsymbol{y}}_h
      \in\underline{\boldsymbol{X}}_h^k, \|\underline{\boldsymbol{y}}_h\|_{\boldsymbol{X},h} = 1}
    \mathcal{A}_h^{\xi}(\underline{\boldsymbol{z}}_h,\underline{\boldsymbol{y}}_h).
  \end{equation}
  Consequently, problem \eqref{eq:dicret:oneline} admits a unique solution.
\end{theorem}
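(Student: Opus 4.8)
The plan is to establish the inf-sup condition \eqref{thm:stability:eq} by the classical strategy for discrete mixed saddle-point systems with a penalty block, namely by constructing, for an arbitrary $\underline{\boldsymbol{z}}_h=(\underline{\boldsymbol{u}}_h,p_h,\underline{p}_h^\Gamma)\in\underline{\boldsymbol{X}}_h^k$, an explicit test function $\underline{\boldsymbol{y}}_h\in\underline{\boldsymbol{X}}_h^k$ with $\|\underline{\boldsymbol{y}}_h\|_{\boldsymbol{X},h}\lesssim\|\underline{\boldsymbol{z}}_h\|_{\boldsymbol{X},h}$ and $\mathcal{A}_h^\xi(\underline{\boldsymbol{z}}_h,\underline{\boldsymbol{y}}_h)\gtrsim\|\underline{\boldsymbol{z}}_h\|_{\boldsymbol{X},h}^2$. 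First I would record the coercivity already available ``on the diagonal'': testing with $\underline{\boldsymbol{y}}_h=(\underline{\boldsymbol{u}}_h,-p_h,-\underline{p}_h^\Gamma)$ makes the off-diagonal $b_h$ and $c_h$ terms cancel and yields $\mathcal{A}_h^\xi(\underline{\boldsymbol{z}}_h,\underline{\boldsymbol{y}}_h)=a_h^\xi(\underline{\boldsymbol{u}}_h,\underline{\boldsymbol{u}}_h)+d_h(\underline{p}_h^\Gamma,\underline{p}_h^\Gamma)$, which by the local stability bounds \eqref{proof:equivMTST} (summed over $T$) and \eqref{lem:defstabmF} (summed over $F$), together with the definition of $|\underline{\boldsymbol{v}}_h|_{\xi,h}$ inside $a_h^\xi$, controls $\|\underline{\boldsymbol{u}}_h\|_{\boldsymbol{U},\xi,h}^2+\|\underline{p}_h^\Gamma\|_{\Gamma,h}^2$ from below. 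So the only missing piece is control of $\|p_h\|_{\bulk,h}$.

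The heart of the argument is therefore an \emph{inf-sup (Fortin) estimate for $b_h$}: for any $p_h\in P_{\bulk,h}^k$ there exists $\underline{\boldsymbol{w}}_h\in\underline{\boldsymbol{U}}_{h,0}^k$ with $b_h(\underline{\boldsymbol{w}}_h,p_h)\gtrsim\|p_h\|_{\bulk,h}^2$ and $\|\underline{\boldsymbol{w}}_h\|_{\boldsymbol{U},\xi,h}\lesssim\|p_h\|_{\bulk,h}$. I would obtain this via the standard Fortin trick: solve the continuous problem $-\nabla\cdot(\boldsymbol{K}\nabla\phi)=p_h$ in $\Omega_\bulk$ with homogeneous Dirichlet data (interpreting $p_h$ as an $L^2$ source on each connected component, noting that the fracture splits $\Omega_\bulk$ into Lipschitz subdomains with nonempty Dirichlet boundary), set $\boldsymbol{\tau}\coloneq -\boldsymbol{K}\nabla\phi\in\boldsymbol{H}(\Hdiv;\Omega_\bulk)$ with $\nabla\cdot\boldsymbol{\tau}=p_h$, and take $\underline{\boldsymbol{w}}_h\coloneq\underline{\boldsymbol{I}}_h^k\boldsymbol{\tau}$. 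The commuting property \eqref{lem:commutdivbulk} gives $D_T^k\underline{\boldsymbol{I}}_T^k\boldsymbol{\tau}=\pi_T^k(\nabla\cdot\boldsymbol{\tau})=\pi_T^k p_h=p_h$ on each $T$, hence $b_h(\underline{\boldsymbol{w}}_h,p_h)=\|p_h\|_{\bulk,h}^2$; elliptic regularity on each subdomain plus the $H^1$-stability/approximation properties \eqref{eq:approx:lproj}--\eqref{eq:approx.trace:lproj} of $\underline{\boldsymbol{I}}_T^k$ (together with $H^1$-boundedness of the Neumann solve \eqref{def:interpolatorbulk:neumann}) bound $\|\underline{\boldsymbol{w}}_h\|_{\boldsymbol{U},\xi,h}$ by $\|p_h\|_{\bulk,h}$ — here the dependence on $\boldsymbol{K}$ and on the geometry enters the constant, which is allowed. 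One must also check $\underline{\boldsymbol{w}}_h\in\underline{\boldsymbol{U}}_{h,0}^k$: continuity of normal DOFs across interior non-fracture faces follows from $\boldsymbol{\tau}\in\boldsymbol{H}(\Hdiv)$ having single-valued normal trace, and the homogeneous Neumann boundary condition is vacuous under assumption \eqref{eq:dirichlet.only}.

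I would then conclude by the usual linear combination: take $\underline{\boldsymbol{y}}_h=(\underline{\boldsymbol{u}}_h,-p_h,-\underline{p}_h^\Gamma)+\delta(\underline{\boldsymbol{w}}_h,0,0)$ for a small parameter $\delta>0$. The extra terms produced are $\delta\,b_h(\underline{\boldsymbol{w}}_h,p_h)=\delta\|p_h\|_{\bulk,h}^2$ (the wanted positive contribution), plus cross terms $\delta\,a_h^\xi(\underline{\boldsymbol{u}}_h,\underline{\boldsymbol{w}}_h)$ and $\delta\,c_h(\underline{\boldsymbol{w}}_h,\underline{p}_h^\Gamma)$ which, by boundedness of $a_h^\xi$ and $c_h$ in the relevant norms (the latter using a discrete trace inequality to control $\|[\![\underline{\boldsymbol{w}}_h]\!]_F\|_F$, or by noting $c_h$ is bounded by $|\cdot|_{\xi,h}$ and $\|\cdot\|_{\Gamma,h}$) and Young's inequality, are absorbed into the diagonal coercivity for $\delta$ small enough. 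Choosing $\delta$ and collecting constants gives \eqref{thm:stability:eq} with $\gamma$ depending on $\varrho$, $k$, the geometry, and the problem data, as claimed. Existence and uniqueness of the discrete solution follow since \eqref{thm:stability:eq} implies injectivity of the square linear system \eqref{eq:dicret:oneline}, hence bijectivity. The main obstacle is the Fortin estimate: one has to verify carefully that elliptic regularity is available on each fractured subdomain (Lipschitz polygonal, mixed-but-here-pure-Dirichlet data) so that $\boldsymbol{\tau}\in H^1(\Omega_{\bulk,i})^2$ and the interpolator $\underline{\boldsymbol{I}}_h^k$ is well-defined (cf.~Remark~\ref{rem:U^+(T)}), and to track that the face-DOF contributions in $\|\underline{\boldsymbol{w}}_h\|_{\boldsymbol{U},\xi,h}$ — including the fracture-face jump and average seminorm $|\underline{\boldsymbol{w}}_h|_{\xi,h}$ — are indeed controlled by $\|p_h\|_{\bulk,h}$ via the trace estimate \eqref{eq:approx.trace:lproj}.
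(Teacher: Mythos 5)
Your overall architecture is the right one and matches the paper's (coercivity of the diagonal blocks, a Fortin-type inf-sup for $b_h$ built on the commuting property \eqref{lem:commutdivbulk}, then absorption of the cross terms), but two steps are off as written. First, the test function for the diagonal estimate: the form $\mathcal{A}_h^\xi$ in \eqref{def:Ahxi} is already skew-symmetrized in the couplings, so the cancellation you want comes from testing with $\underline{\boldsymbol{z}}_h$ itself, which gives $\mathcal{A}_h^\xi(\underline{\boldsymbol{z}}_h,\underline{\boldsymbol{z}}_h)=a_h^\xi(\underline{\boldsymbol{u}}_h,\underline{\boldsymbol{u}}_h)+d_h(\underline{p}_h^\Gamma,\underline{p}_h^\Gamma)$. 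With your sign-flipped choice $(\underline{\boldsymbol{u}}_h,-p_h,-\underline{p}_h^\Gamma)$ the off-diagonal terms do \emph{not} cancel (you get $-2b_h(\underline{\boldsymbol{u}}_h,p_h)+2c_h(\underline{\boldsymbol{u}}_h,\underline{p}_h^\Gamma)$, with no small parameter to absorb them) and the fracture term appears as $-d_h(\underline{p}_h^\Gamma,\underline{p}_h^\Gamma)\le 0$, so the claimed identity is false and the concluding absorption breaks. Relatedly, enriching the test function by $\delta(\underline{\boldsymbol{w}}_h,0,\underline{0})$ contributes $-\delta\, b_h(\underline{\boldsymbol{w}}_h,p_h)$, not $+\delta\, b_h(\underline{\boldsymbol{w}}_h,p_h)$; the inf-sup field must be normalized so that $-b_h(\underline{\boldsymbol{w}}_h,p_h)=\|p_h\|_{\bulk,h}^2$, exactly as in \eqref{eq:inf-sup.ah:vh}. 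These are sign slips, easily repaired, after which your Steps coincide with the paper's combination $(1-\alpha)\underline{\boldsymbol{z}}_h+\alpha(\underline{\boldsymbol{v}}_h,0,\underline{0})$ and Young's inequality.

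The more substantive gap is in the Fortin construction itself. You build the flux as $\boldsymbol{\tau}=-\boldsymbol{K}\nabla\phi$ with $\phi$ solving the Darcy problem with the actual heterogeneous, anisotropic coefficient, and then need $\boldsymbol{\tau}\in H^1(\Omega_{\bulk,i})^2$ so that $\underline{\boldsymbol{I}}_h^k$ is defined (Remark~\ref{rem:U^+(T)}). That requires an $H^2$-type regularity which fails in general: $\boldsymbol{K}$ is only piecewise constant on $\mathcal{P}_\bulk$ (interior jumps) and the subdomains $\Omega_{\bulk,i}$ are Lipschitz polygons that may have re-entrant corners, so flagging this as something ``to verify carefully'' does not close the argument — in this generality it cannot be verified. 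The paper sidesteps regularity entirely: take any $\boldsymbol{v}_i\in\boldsymbol{H}(\Hdiv;\Omega_{\bulk,i})$ with $\nabla\cdot\boldsymbol{v}_i=q_h$ (surjectivity of the divergence onto $L^2$), then apply the divergence-preserving $H^1$ smoothing of \cite[Section~4.1]{Gatica:14} — admissible here precisely because, under \eqref{eq:dirichlet.only}, no Neumann constraint on the normal trace has to be respected — and interpolate the smoothed field; the commuting property and trace inequalities then yield \eqref{eq:inf-sup.bh}, including the control of the fracture seminorm $|{\cdot}|_{\xi,h}$. Your proof goes through once the elliptic-regularity step is replaced by such a divergence-preserving smoothing (equivalently, solving a Laplace problem on a larger convex set containing $\Omega_{\bulk,i}$ and restricting).
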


\begin{proof}
  See Section~\ref{sec:stability:proof}.
\end{proof}

We next provide an a priori estimate of the discretization error.
Let $(\boldsymbol{u},p,p_\Gamma)\in\boldsymbol{U}\times P_\bulk\times P_\Gamma$ and $(\underline{\boldsymbol{u}}_h,p_h,\underline{p}_h^\Gamma)\in\underline{\boldsymbol{X}}_h$ denote, respectively, the unique solutions to problems \eqref{eq:weak} and~\eqref{eq:discret} (recall that, owing to~\eqref{eq:dirichlet.only}, $p_\Gamma=p_{\Gamma,0}$ and $\underline{p}_h^\Gamma=\underline{p}_{h,0}^\Gamma$).
We further assume that $\boldsymbol{u}\in H^1(\t_h)^2$, and we estimate the error defined as the difference between the discrete solution $(\underline{\boldsymbol{u}}_h,p_h,\underline{p}_h^\Gamma)$ and the following projection of the exact solution:
\begin{align}\label{def:interpsol}
  (\widehat{\underline{\boldsymbol{u}}}_h, \widehat{p}_h, \underline{\widehat{p}}_h^\Gamma)
  \coloneq
  (\boldsymbol{\underline{I}}_h^k\boldsymbol{u}, \pi_h^k p, \underline{I}_h^k p_{\Gamma})\in\boldsymbol{\underline{X}}_h.
\end{align}

\begin{theorem}[Error estimate]\label{thm:energyerror}
  Let~\eqref{eq:dirichlet.only} hold true, and denote by $(\boldsymbol{u},p,p_\Gamma)\in\boldsymbol{U}\times P_\bulk\times P_\Gamma$ and $(\underline{\boldsymbol{u}}_h,p_h,\underline{p}_h^\Gamma)\in\underline{\boldsymbol{X}}_h^k$ the unique solutions to problems \eqref{eq:weak} and~\eqref{eq:discret}, respectively.
  Assume the additional regularity $p_{|T}\in H^{k+2}(T)$ for all $T\in\t_h$ and $(p_\Gamma)_{|F}\in H^{k+2}(F)$ for all $F\in\f_h^\Gamma$.
  Then, there exist a real number $C>0$ independent of $h$ and of the problem data, but possibly depending on $\varrho$ and $k$, such that
  \begin{equation}\label{eq:energyerror}
    \begin{multlined}
      \|\boldsymbol{\underline{u}}_h - \widehat{\boldsymbol{\underline{u}}}_h \|_{\boldsymbol{U},\xi,h}
      + \| \underline{p}_h^\Gamma - \widehat{\underline{p}}_h^\Gamma \|_{\Gamma,h}
      + \chi \|p_h-\widehat{p}_h\|_{B,h}
      \\
      \le C\left(
      \sum_{T\in\t_h}\varrho_{\bulk,T}\uKBT h_T^{2(k+1)} \|p\|_{H^{k+2}(T)}^2  
      + \sum_{F\in\f_h^\Gamma}K_F h_F^{2(k+1)}\|p_\Gamma\|_{H^{k+2}(F)}^2
      \right)^{\nicefrac12},
    \end{multlined}
  \end{equation}
  with $\chi>0$ independent of $h$ but possibly depending on $\varrho$, $k$, and on the problem geometry and data.
\end{theorem}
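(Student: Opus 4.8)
The estimate~\eqref{eq:energyerror} is an energy-error bound for a saddle-point-type problem, so the natural strategy is a \emph{third Strang lemma}–type argument leveraging the inf-sup stability of Theorem~\ref{thm:stability}. The plan is to introduce the error between the discrete solution $(\underline{\boldsymbol{u}}_h,p_h,\underline{p}_h^\Gamma)$ and the projection $(\widehat{\underline{\boldsymbol{u}}}_h,\widehat{p}_h,\underline{\widehat{p}}_h^\Gamma)$ of the exact solution defined in~\eqref{def:interpsol}, call it $\underline{\boldsymbol{e}}_h\in\underline{\boldsymbol{X}}_h^k$. By the inf-sup condition~\eqref{thm:stability:eq}, it suffices to bound $\mathcal{A}_h^\xi(\underline{\boldsymbol{e}}_h,\underline{\boldsymbol{y}}_h)$ uniformly over $\underline{\boldsymbol{y}}_h=(\underline{\boldsymbol{v}}_h,q_h,\underline{q}_h^\Gamma)$ of unit $\|{\cdot}\|_{\boldsymbol{X},h}$-norm. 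Using the discrete problem~\eqref{eq:dicret:oneline}, one rewrites this as $\mathcal{A}_h^\xi(\underline{\boldsymbol{e}}_h,\underline{\boldsymbol{y}}_h) = \mathcal{E}_h(\underline{\boldsymbol{y}}_h)$, where $\mathcal{E}_h$ is a \emph{consistency error} functional measuring how well the projected exact solution satisfies the discrete equations; the right-hand side data terms cancel against the exact-solution contributions coming from the strong/weak forms.

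\textbf{Key steps.} First I would split $\mathcal{E}_h$ into four groups of contributions following the structure of $\mathcal{A}_h^\xi$ in~\eqref{def:Ahxi}: (i) the bulk permeability-weighted term $\sum_T m_T(\underline{\boldsymbol{I}}_T^k\boldsymbol{u},\underline{\boldsymbol{v}}_T) - (\text{weak-form contribution})$; (ii) the bulk flux--pressure coupling terms involving $b_h$, to be matched against $b(\boldsymbol{v},p)$ and $(f,q)$ via the commuting property~\eqref{lem:commutdivbulk} of $D_T^k$, which makes $b_h(\underline{\boldsymbol{v}}_h,\widehat{p}_h)$ and $b_h(\widehat{\underline{\boldsymbol{u}}}_h,q_h)$ essentially exact; (iii) the interface coupling $c_h$ terms, matched against $c(\boldsymbol{u},q_\Gamma)$ and $c(\boldsymbol{v},p_{\Gamma,0})$, plus the jump/average penalty terms in $a_h^\xi$ matched against the $\lambda_\Gamma^\xi$ and $\lambda_\Gamma$ contributions in $a_\xi$; (iv) the fracture tangential diffusion $d_h(\underline{I}_h^k p_\Gamma,\underline{q}_h^\Gamma)$ matched against $d(p_\Gamma,q_\Gamma)$ and $(\ell_\Gamma f_\Gamma,q_\Gamma)_\Gamma$. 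For group (i), I would split $m_T$ into its Galerkin part $(\boldsymbol{K}_T^{-1}\boldsymbol{F}_T^{k+1}\underline{\boldsymbol{I}}_T^k\boldsymbol{u},\boldsymbol{F}_T^{k+1}\underline{\boldsymbol{v}}_T)_T$ and the stabilization $J_T$; the Galerkin part is handled by a standard approximation argument (inserting $\boldsymbol{K}_T\nabla(\text{something})$ and using an integration-by-parts identity defining $\boldsymbol{F}_T^{k+1}$ together with~\eqref{eq:approx:lproj}–\eqref{eq:approx.trace:lproj} applied to $p$), yielding an $h_T^{k+1}$ bound with the $\varrho_{\bulk,T}^{1/2}\uKBT^{1/2}$ weight, while the stabilization contribution is controlled directly by the consistency estimate~\eqref{eq:consistency:JT}. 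For group (iv), the analogue is~\eqref{lem:defstabmF} together with the approximation properties of $\pi_F^k$ and the consistency of the fracture stabilization $j_F$, producing the $K_F^{1/2}h_F^{k+1}$ contributions. Throughout, I would use Cauchy–Schwarz, the norm equivalences~\eqref{proof:equivMTST} and~\eqref{lem:defstabmF}, and the boundedness~\eqref{proof:stabDiv} to collect all pieces under the $\|\underline{\boldsymbol{y}}_h\|_{\boldsymbol{X},h}=1$ normalization. Finally, the pressure-error term $\chi\|p_h-\widehat{p}_h\|_{B,h}$ is recovered from a separate inf-sup (Fortin-type) argument for $b_h$ — essentially the bulk flux-pressure coupling stability invoked in the discussion preceding Theorem~\ref{thm:stability} — which is why $\chi$ is allowed to depend on the problem data.

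\textbf{Main obstacle.} The delicate point is controlling the Galerkin consistency of the bulk term $m_T$ while keeping explicit and sharp the dependence on the local anisotropy ratio $\varrho_{\bulk,T}$, showing it enters only to the power $\nicefrac12$. This requires carefully exploiting the $\boldsymbol{K}_T$-weighted structure of the flux reconstruction $\boldsymbol{F}_T^{k+1}$ and of the norm $\|{\cdot}\|_{\boldsymbol{U},T}$: one must insert the \emph{right} comparison function — namely $\underline{\boldsymbol{I}}_T^k(\boldsymbol{K}_T\nabla q)$ for an appropriate $q\approx p$ — so that the permeability weights in the $(\boldsymbol{K}_T^{-1}\cdot,\cdot)_T$-products telescope, and estimate the residual using~\eqref{eq:approx:lproj}–\eqref{eq:approx.trace:lproj} with the correct powers of $h_T$ and $\uKBT$, $\lKBT$. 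A secondary technical nuisance is the bookkeeping on fracture faces: the jump and average operators $[\![\cdot]\!]_F$, $\{\!\{\cdot\}\!\}_F$ on discrete flux DOFs must be shown to be consistent with their continuous counterparts when applied to $\underline{\boldsymbol{I}}_h^k\boldsymbol{u}$, which hinges on the definition of $\underline{\boldsymbol{I}}_T^k$ through face moments $\pi_F^k(\boldsymbol{u}\cdot\normal_{TF})$ and on the interface conditions~\eqref{eq:strong:couplcond} satisfied by the exact solution; the penalty terms then match up to projection errors of order $h_F^{k+1}$ times $\lambda_F^{1/2}$ (resp. $(\lambda_F^\xi)^{1/2}$), which are absorbed into the $K_F h_F^{2(k+1)}$ terms on the right-hand side after using the relations~\eqref{eq:lambda.lambdaxi}.
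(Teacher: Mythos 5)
There is a genuine gap, and it sits at the very first step of your plan. You propose to estimate the error through the global inf-sup condition of Theorem~\ref{thm:stability}, i.e.\ via $\|\underline{\boldsymbol{e}}_h\|_{\boldsymbol{X},h}\le\gamma\sup_{\|\underline{\boldsymbol{y}}_h\|_{\boldsymbol{X},h}=1}\mathcal{A}_h^\xi(\underline{\boldsymbol{e}}_h,\underline{\boldsymbol{y}}_h)$, and then bound the resulting consistency functional. But the constant $\gamma$ in Theorem~\ref{thm:stability} depends on the problem data — in particular on the \emph{global} anisotropy ratio $\varrho_\bulk$ of~\eqref{eq:varrho.B} and on $\llkn$, through the constants $C_1$, $C_2$, $C_3$ appearing in its proof — so this route can only deliver an estimate whose right-hand side carries data-dependent multiplicative factors in front of the whole error. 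That is strictly weaker than the statement you are asked to prove, whose entire point is that $C$ is independent of the data, that the anisotropy enters only locally through $\varrho_{\bulk,T}^{\nicefrac12}\uKBT^{\nicefrac12}$ inside the sum, and that data-dependence is confined to the factor $\chi$ multiplying the bulk pressure error alone. The paper explicitly flags this (see the remark on the role of Step~1 in Section~\ref{sec:error.analysis}) and avoids it by \emph{not} using the global inf-sup: it tests the error equation with the error itself, so that the skew-symmetric $b_h$ and $c_h$ couplings in $\mathcal{A}_h^\xi$ cancel and the coercivity of $a_h^\xi$ and $d_h$ alone (first inequalities in~\eqref{eq:stability:ah} and~\eqref{eq:stability.dh}, with data-free constants) controls $\|\underline{\boldsymbol{e}}_h\|_{a,\xi,h}$ and $\|\underline{\epsilon}_h^\Gamma\|_{\Gamma,h}$ by the conformity errors $\mathcal{E}_{h,1},\mathcal{E}_{h,2},\mathcal{E}_{h,3}$.

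A related inconsistency in your plan is the role of the bulk pressure: if you really went through the global inf-sup on $\mathcal{A}_h^\xi$, the $\|\epsilon_h\|_{\bulk,h}$ piece would already be controlled and your ``separate Fortin-type argument for $b_h$'' would be redundant; in the paper that separate step is not optional but forced, precisely because diagonal testing cannot see the bulk pressure (the $b_h$ terms vanish). There one writes $\|\epsilon_h\|_{\bulk,h}^2=-b_h(\underline{\boldsymbol{v}}_h,\epsilon_h)$ via Lemma~\ref{lem:inf-sup.bh}, substitutes the discrete equation~\eqref{eq:discret:1}, and bounds through the continuity of $a_h^\xi$ and $c_h$, which is exactly where the data-dependent factors are generated and collected into $\chi$ on the left-hand side. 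Your treatment of the consistency terms themselves (commuting property~\eqref{lem:commutdivbulk} for the $b_h$ blocks, consistency~\eqref{eq:consistency:JT} of $J_T$, consistency of $d_h$, the identities $[\![\widehat{\underline{\boldsymbol{u}}}_h]\!]_F=\pi_F^k([\![\boldsymbol{u}]\!]_\Gamma\cdot\normal_\Gamma)$ and its average analogue, and an elliptic-projection argument for the Galerkin part of $m_T$) is in line with the paper's Steps~2, 4 and~5, so the estimate of the conformity error is essentially right; it is the stability backbone that must be replaced as described above for the theorem, with its tracked data-dependence, to follow.
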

\begin{proof}
  See Section~\ref{sec:stability:proof}.
\end{proof}
\begin{remark}[Error norm and robustness]\label{rem:robustness}
  The error norm in the left-hand side of~\eqref{eq:energyerror} is selected so as to prevent the right-hand side from depending on the global bulk anisotropy ratio $\varrho_\bulk$ (see~\eqref{eq:varrho.B}).
  As a result, for both the error on the bulk flux measured by $\|\boldsymbol{\underline{u}}_h - \widehat{\boldsymbol{\underline{u}}}_h \|_{\boldsymbol{U},\xi,h}$ and the error on the fracture pressure measured by $\| \underline{p}_h^\Gamma - \widehat{\underline{p}}_h^\Gamma \|_{\Gamma,h}$, we have: (i) as in more standard discretizations, full robustness  with respect to the heterogeneity of $\boldsymbol{K}$ and $K_{\Gamma}$, meaning that the right-hand side does not depend on the jumps of these quantities; (ii) partial robustness with respect to the anisotropy of the bulk permeability, with a mild dependence on the square root of $\varrho_{\bulk,T}$ (see~\eqref{eq:varrho.BT}).
  As expected, robustness is not obtained for the $L^{2}$-error on the pressure in the bulk, which is multiplied by a data-dependent real number $\chi$.
  
  In the context of primal HHO methods, more general, possibly nonlinear diffusion terms including, as a special case, variable diffusion tensors inside the mesh elements have been recently considered in \cite{Di-Pietro.Droniou:16,Di-Pietro.Droniou.bis:16}. In this case, one can expect the error estimate to depend on the square root of the ratio of the Lipschitz module and the coercivity constant of the diffusion field; see~\cite[Eq. (3.1)]{Di-Pietro.Droniou.bis:16}. The extension to the mixed HHO formulation considered here for the bulk region can be reasonably expected to behave in a similar way. The details are postponed to a future work.
\end{remark}
\begin{remark}[$L^2$-supercloseness of bulk and fracture pressures]\label{rem:supercloseness}
  Using arguments based on the Aubin--Nitsche trick, one could prove under further regularity assumptions on the problem geometry that the $L^2$-errors $\|p_h-\widehat{p}_h\|_{\bulk,h}$ and $\|p_h^\Gamma-\widehat{p}_h^\Gamma\|_{\Gamma,h}$ converge as $h^{k+2}$, where we have denoted by $p_h^\Gamma$ and $\widehat{p}_h^\Gamma$ the broken polynomial functions on $\Gamma$ such that $(p_h^\Gamma)_{|F}\coloneq p_F^\Gamma$ and $(\widehat{p}_h^\Gamma)\coloneq\widehat{p}_F^\Gamma$ for all $F\in\f_h^\Gamma$.
  This supercloseness behaviour is typical of HHO methods (cf., e.g.,~\cite[Theorem~7]{Di-Pietro.Ern:16} and~\cite[Theorem~10]{Di-Pietro.Ern.ea:14}), and is confirmed by the numerical example of Section~\ref{sec:num.tests:convergence}; see, in particular, Figures~\ref{fig:cv.2} and~\ref{fig:cv.1}.
\end{remark}


\section{Numerical results}\label{sec:num.tests}
\pgfqkeys{/pgfplots}{ cycle list name = black white }
We provide an extensive numerical validation of the method on a set of model problems.

\subsection{Convergence}\label{sec:num.tests:convergence}

We start by a non physical numerical test that demonstrates the convergence properties of the method.
We approximate problem \eqref{eq:discret} on the square domain $\Omega = (0,1)^2$ crossed by the fracture $\Gamma = \{\boldsymbol{x}\in\Omega ~|~ x_1 = 0.5\}$ with $\partial\Omega_\bulk^\text{N} = \partial\Gamma^\text{N} = \emptyset$. 
We consider the exact solution corresponding to the bulk and fracture pressures
\begin{align*}
  p(\boldsymbol{x}) =
  \begin{cases}
    \sin(4 x_1) \cos(\pi x_2) & \text{if $x_1 < 0.5$} \\
    \cos(4 x_1) \cos(\pi x_2) & \text{if $x_1>0.5$}
  \end{cases},\qquad
  p_\Gamma(\boldsymbol{x}) = \xi (\cos(2) + \sin(2)) \cos(\pi x_2),
\end{align*}
and let $\boldsymbol{u}_{|\Omega_{\bulk,i}} = - \nabla p_{|\Omega_{\bulk,i}}$ for $i\in\{1,2\}$.
We take here $\xi=\nicefrac34$, $\kappa_\Gamma^\tau = 1$, {$\ell_\Gamma = 0.01$ and 
\begin{align}\label{def:perm.conv.anal}
	\boldsymbol{K} \coloneq \left[
  \begin{tabular}{cc}
    $\kappa_{\Gamma}^n/(2\ell_\Gamma)$ & $0$ \\
    $0$ & $1$
  \end{tabular}\right],
\end{align}
where $\kappa_n^{\Gamma} > 0$ is the normal permeability of the fracture.}
The expression of the source terms $f$, $f_\Gamma$, and of the Dirichlet data $g_\bulk$ and $g_\Gamma$ are inferred from \eqref{eq:discret}.
It can be checked that, with this choice, the quantities $[\![ p ]\!]_\Gamma$, $[\![ \boldsymbol{u} ]\!]_\Gamma$, and $\{\!\{\boldsymbol{u}\}\!\}_\Gamma$ are not identically zero on the fracture.
\begin{figure}\centering
  \begin{minipage}[t]{0.25\textwidth}
    \begin{center}
      \begin{tikzpicture}[scale=3]
        \draw plot coordinates {(0.00000000,0.50000000) (0.25000000,0.50000000) (0.15000000,0.65000000) (0.00000000,0.50000000)};
        \draw plot coordinates {(0.25000000,0.50000000) (0.32500000,0.67500000) (0.15000000,0.65000000) (0.25000000,0.50000000)};
        \draw plot coordinates {(0.25000000,0.50000000) (0.50000000,0.50000000) (0.32500000,0.67500000) (0.25000000,0.50000000)};
        \draw plot coordinates {(0.50000000,0.50000000) (0.50000000,0.75000000) (0.32500000,0.67500000) (0.50000000,0.50000000)};
        \draw plot coordinates {(0.50000000,0.75000000) (0.35000000,0.85000000) (0.32500000,0.67500000) (0.50000000,0.75000000)};
        \draw plot coordinates {(0.50000000,0.75000000) (0.50000000,1.00000000) (0.35000000,0.85000000) (0.50000000,0.75000000)};
        \draw plot coordinates {(0.50000000,1.00000000) (0.25000000,1.00000000) (0.35000000,0.85000000) (0.50000000,1.00000000)};
        \draw plot coordinates {(0.35000000,0.85000000) (0.25000000,1.00000000) (0.17500000,0.82500000) (0.35000000,0.85000000)};
        \draw plot coordinates {(0.25000000,1.00000000) (0.00000000,1.00000000) (0.17500000,0.82500000) (0.25000000,1.00000000)};
        \draw plot coordinates {(0.00000000,1.00000000) (0.00000000,0.75000000) (0.17500000,0.82500000) (0.00000000,1.00000000)};
        \draw plot coordinates {(0.00000000,0.75000000) (0.15000000,0.65000000) (0.17500000,0.82500000) (0.00000000,0.75000000)};
        \draw plot coordinates {(0.00000000,0.75000000) (0.00000000,0.50000000) (0.15000000,0.65000000) (0.00000000,0.75000000)};
        \draw plot coordinates {(0.15000000,0.65000000) (0.32500000,0.67500000) (0.17500000,0.82500000) (0.15000000,0.65000000)};
        \draw plot coordinates {(0.32500000,0.67500000) (0.35000000,0.85000000) (0.17500000,0.82500000) (0.32500000,0.67500000)};
        \draw plot coordinates {(0.50000000,0.50000000) (0.75000000,0.50000000) (0.65000000,0.65000000) (0.50000000,0.50000000)};
        \draw plot coordinates {(0.75000000,0.50000000) (0.82500000,0.67500000) (0.65000000,0.65000000) (0.75000000,0.50000000)};
        \draw plot coordinates {(0.75000000,0.50000000) (1.00000000,0.50000000) (0.82500000,0.67500000) (0.75000000,0.50000000)};
        \draw plot coordinates {(1.00000000,0.50000000) (1.00000000,0.75000000) (0.82500000,0.67500000) (1.00000000,0.50000000)};
        \draw plot coordinates {(1.00000000,0.75000000) (0.85000000,0.85000000) (0.82500000,0.67500000) (1.00000000,0.75000000)};
        \draw plot coordinates {(1.00000000,0.75000000) (1.00000000,1.00000000) (0.85000000,0.85000000) (1.00000000,0.75000000)};
        \draw plot coordinates {(1.00000000,1.00000000) (0.75000000,1.00000000) (0.85000000,0.85000000) (1.00000000,1.00000000)};
        \draw plot coordinates {(0.85000000,0.85000000) (0.75000000,1.00000000) (0.67500000,0.82500000) (0.85000000,0.85000000)};
        \draw plot coordinates {(0.75000000,1.00000000) (0.50000000,1.00000000) (0.67500000,0.82500000) (0.75000000,1.00000000)};
        \draw plot coordinates {(0.50000000,1.00000000) (0.50000000,0.75000000) (0.67500000,0.82500000) (0.50000000,1.00000000)};
        \draw plot coordinates {(0.50000000,0.75000000) (0.65000000,0.65000000) (0.67500000,0.82500000) (0.50000000,0.75000000)};
        \draw plot coordinates {(0.50000000,0.75000000) (0.50000000,0.50000000) (0.65000000,0.65000000) (0.50000000,0.75000000)};
        \draw plot coordinates {(0.65000000,0.65000000) (0.82500000,0.67500000) (0.67500000,0.82500000) (0.65000000,0.65000000)};
        \draw plot coordinates {(0.82500000,0.67500000) (0.85000000,0.85000000) (0.67500000,0.82500000) (0.82500000,0.67500000)};
        \draw plot coordinates {(0.00000000,0.00000000) (0.25000000,0.00000000) (0.15000000,0.15000000) (0.00000000,0.00000000)};
        \draw plot coordinates {(0.25000000,0.00000000) (0.32500000,0.17500000) (0.15000000,0.15000000) (0.25000000,0.00000000)};
        \draw plot coordinates {(0.25000000,0.00000000) (0.50000000,0.00000000) (0.32500000,0.17500000) (0.25000000,0.00000000)};
        \draw plot coordinates {(0.50000000,0.00000000) (0.50000000,0.25000000) (0.32500000,0.17500000) (0.50000000,0.00000000)};
        \draw plot coordinates {(0.50000000,0.25000000) (0.35000000,0.35000000) (0.32500000,0.17500000) (0.50000000,0.25000000)};
        \draw plot coordinates {(0.50000000,0.25000000) (0.50000000,0.50000000) (0.35000000,0.35000000) (0.50000000,0.25000000)};
        \draw plot coordinates {(0.50000000,0.50000000) (0.25000000,0.50000000) (0.35000000,0.35000000) (0.50000000,0.50000000)};
        \draw plot coordinates {(0.35000000,0.35000000) (0.25000000,0.50000000) (0.17500000,0.32500000) (0.35000000,0.35000000)};
        \draw plot coordinates {(0.25000000,0.50000000) (0.00000000,0.50000000) (0.17500000,0.32500000) (0.25000000,0.50000000)};
        \draw plot coordinates {(0.00000000,0.50000000) (0.00000000,0.25000000) (0.17500000,0.32500000) (0.00000000,0.50000000)};
        \draw plot coordinates {(0.00000000,0.25000000) (0.15000000,0.15000000) (0.17500000,0.32500000) (0.00000000,0.25000000)};
        \draw plot coordinates {(0.00000000,0.25000000) (0.00000000,0.00000000) (0.15000000,0.15000000) (0.00000000,0.25000000)};
        \draw plot coordinates {(0.15000000,0.15000000) (0.32500000,0.17500000) (0.17500000,0.32500000) (0.15000000,0.15000000)};
        \draw plot coordinates {(0.32500000,0.17500000) (0.35000000,0.35000000) (0.17500000,0.32500000) (0.32500000,0.17500000)};
        \draw plot coordinates {(0.50000000,0.00000000) (0.75000000,0.00000000) (0.65000000,0.15000000) (0.50000000,0.00000000)};
        \draw plot coordinates {(0.75000000,0.00000000) (0.82500000,0.17500000) (0.65000000,0.15000000) (0.75000000,0.00000000)};
        \draw plot coordinates {(0.75000000,0.00000000) (1.00000000,0.00000000) (0.82500000,0.17500000) (0.75000000,0.00000000)};
        \draw plot coordinates {(1.00000000,0.00000000) (1.00000000,0.25000000) (0.82500000,0.17500000) (1.00000000,0.00000000)};
        \draw plot coordinates {(1.00000000,0.25000000) (0.85000000,0.35000000) (0.82500000,0.17500000) (1.00000000,0.25000000)};
        \draw plot coordinates {(1.00000000,0.25000000) (1.00000000,0.50000000) (0.85000000,0.35000000) (1.00000000,0.25000000)};
        \draw plot coordinates {(1.00000000,0.50000000) (0.75000000,0.50000000) (0.85000000,0.35000000) (1.00000000,0.50000000)};
        \draw plot coordinates {(0.85000000,0.35000000) (0.75000000,0.50000000) (0.67500000,0.32500000) (0.85000000,0.35000000)};
        \draw plot coordinates {(0.75000000,0.50000000) (0.50000000,0.50000000) (0.67500000,0.32500000) (0.75000000,0.50000000)};
        \draw plot coordinates {(0.50000000,0.50000000) (0.50000000,0.25000000) (0.67500000,0.32500000) (0.50000000,0.50000000)};
        \draw plot coordinates {(0.50000000,0.25000000) (0.65000000,0.15000000) (0.67500000,0.32500000) (0.50000000,0.25000000)};
        \draw plot coordinates {(0.50000000,0.25000000) (0.50000000,0.00000000) (0.65000000,0.15000000) (0.50000000,0.25000000)};
        \draw plot coordinates {(0.65000000,0.15000000) (0.82500000,0.17500000) (0.67500000,0.32500000) (0.65000000,0.15000000)};
        \draw plot coordinates {(0.82500000,0.17500000) (0.85000000,0.35000000) (0.67500000,0.32500000) (0.82500000,0.17500000)};
        
        \draw[very thick] (0.5,0) -- (0.5,1);
        \draw plot[mark=*, mark size = 0.3pt] (0.5,0.);
        \draw plot[mark=*, mark size = 0.3pt] (0.5,0.25);
        \draw plot[mark=*, mark size = 0.3pt] (0.5,0.5);
        \draw plot[mark=*, mark size = 0.3pt] (0.5,0.75);
        \draw plot[mark=*, mark size = 0.3pt] (0.5,1);
      \end{tikzpicture}
      \subcaption{Triangular\label{fig:meshfamilies:triangular}}
    \end{center}
  \end{minipage}
  \hspace{0.5cm}
  \begin{minipage}[t]{0.25\textwidth}
    \begin{center}
      \begin{tikzpicture}[scale=3]
        \draw plot coordinates {(0.00000000,0.00000000) (0.2,0.00000000) (0.2,0.2) (0.00000000,0.2) (0.00000000,0.00000000)};
        \draw plot coordinates {(0.20000000,0.00000000) (0.4,0.0) (0.40000000,0.2) (0.20000000,0.20000000)};
        \draw plot coordinates {(0.40000000,0.00000000) (0.6,0.0) (0.60000000,0.2) (0.40000000,0.20000000)};
        \draw plot coordinates {(0.60000000,0.00000000) (0.8,0.0) (0.80000000,0.2) (0.60000000,0.20000000)};
        \draw plot coordinates {(0.80000000,0.00000000) (1,0.0) (1.0000000,0.2) (0.80000000,0.20000000)};

        \draw plot coordinates {(0.00000000,0.20000000) (0.0,0.4) (0.20000000,0.4) (0.20000000,0.20000000)};
        \draw plot coordinates {(0.20000000,0.40000000) (0.4,0.4) (0.40000000,0.2) };
        \draw plot coordinates {(0.40000000,0.40000000) (0.6,0.4) (0.60000000,0.2) };
        \draw plot coordinates {(0.60000000,0.40000000) (0.8,0.4) (0.80000000,0.2) };
        \draw plot coordinates {(0.80000000,0.40000000) (1,0.4) (1,0.2) };

        \draw plot coordinates {(0.00000000,0.40000000) (0.0,0.6) (0.20000000,0.6) (0.20000000,0.40000000)};
        \draw plot coordinates {(0.20000000,0.60000000) (0.4,0.6) (0.40000000,0.4) };
        \draw plot coordinates {(0.40000000,0.60000000) (0.6,0.6) (0.60000000,0.4) };
        \draw plot coordinates {(0.60000000,0.60000000) (0.8,0.6) (0.80000000,0.4) };
        \draw plot coordinates {(0.80000000,0.60000000) (1,0.6) (1,0.4) };

        \draw plot coordinates {(0.00000000,0.60000000) (0.0,0.8) (0.20000000,0.8) (0.20000000,0.60000000)};
        \draw plot coordinates {(0.20000000,0.80000000) (0.4,0.8) (0.40000000,0.6) };
        \draw plot coordinates {(0.40000000,0.80000000) (0.6,0.8) (0.60000000,0.6) };
        \draw plot coordinates {(0.60000000,0.80000000) (0.8,0.8) (0.80000000,0.6) };
        \draw plot coordinates {(0.80000000,0.80000000) (1,0.8) (1,0.6) };

        \draw plot coordinates {(0.00000000,0.80000000) (0.0,1) (0.20000000,1) (0.20000000,0.80000000)};
        \draw plot coordinates {(0.20000000,1.0000000) (0.4,1) (0.40000000,0.8) };
        \draw plot coordinates {(0.40000000,1) (0.6,1) (0.60000000,0.8) };
        \draw plot coordinates {(0.60000000,1) (0.8,1) (0.80000000,0.8) };
        \draw plot coordinates {(0.80000000,1) (1,1) (1,0.8) };

        \draw plot coordinates{(0,0.1) (1,0.1)};
        \draw plot coordinates{(0,0.3) (1,0.3)};
        \draw plot coordinates{(0,0.5) (1,0.5)};
        \draw plot coordinates{(0,0.7) (1,0.7)};
        \draw plot coordinates{(0,0.9) (1,0.9)};

        \draw plot coordinates{(0.1,0) (0.1,1)};
        \draw plot coordinates{(0.3,0) (0.3,1)};
        \draw plot coordinates{(0.5,0) (0.5,1)};
        \draw plot coordinates{(0.7,0) (0.7,1)};
        \draw plot coordinates{(0.9,0) (0.9,1)};

        \draw[very thick] (0.5,0) -- (0.5,1);
        \draw plot[mark=*, mark size = 0.3pt] (0.5,0.);
        \draw plot[mark=*, mark size = 0.3pt] (0.5,0.1);
        \draw plot[mark=*, mark size = 0.3pt] (0.5,0.2);
        \draw plot[mark=*, mark size = 0.3pt] (0.5,0.3);
        \draw plot[mark=*, mark size = 0.3pt] (0.5,0.4);
        \draw plot[mark=*, mark size = 0.3pt] (0.5,0.5);
        \draw plot[mark=*, mark size = 0.3pt] (0.5,0.6);
        \draw plot[mark=*, mark size = 0.3pt] (0.5,0.7);
        \draw plot[mark=*, mark size = 0.3pt] (0.5,0.8);
        \draw plot[mark=*, mark size = 0.3pt] (0.5,0.9);
        \draw plot[mark=*, mark size = 0.3pt] (0.5,1);
      \end{tikzpicture}
      \subcaption{Cartesian\label{fig:meshfamilies:cartesian}}
    \end{center}
  \end{minipage}
  \hspace{0.5cm}
  \begin{minipage}[t]{0.25\textwidth}
    \begin{center}
      \begin{tikzpicture}[scale=3]
        \draw plot coordinates {(0,0) (1,0) (1,1) (0,1) (0,0)};
        \draw plot coordinates {(0,0.25) (0.5,0.25)};
        \draw plot coordinates {(0,0.5) (0.5,0.5)};
        \draw plot coordinates {(0,0.75) (0.5,0.75)};
        \draw plot coordinates {(0.25,0) (0.25,1)};
        
        \draw plot coordinates {(0.5,0.125) (1,0.125)};
        \draw plot coordinates {(0.5,0.375) (1,0.375)};
        \draw plot coordinates {(0.5,0.625) (1,0.625)};
        \draw plot coordinates {(0.5,0.875) (1,0.875)};
        \draw plot coordinates {(0.75,0) (0.75,1)};

%
%
		\draw[very thick] (0.5,0) -- (0.5,1);
        \draw plot[mark=*, mark size = 0.3pt] (0.5,0);
        \draw plot[mark=*, mark size = 0.3pt] (0.5,0.125);
        \draw plot[mark=*, mark size = 0.3pt] (0.5,0.25);
        \draw plot[mark=*, mark size = 0.3pt] (0.5,0.375);
        \draw plot[mark=*, mark size = 0.3pt] (0.5,0.5);
        \draw plot[mark=*, mark size = 0.3pt] (0.5,0.625);
        \draw plot[mark=*, mark size = 0.3pt] (0.5,0.75);
        \draw plot[mark=*, mark size = 0.3pt] (0.5,0.875);
        \draw plot[mark=*, mark size = 0.3pt] (0.5,1);
      \end{tikzpicture}
      \subcaption{\label{fig:meshfamilies:locref}Nonconforming}
    \end{center}
  \end{minipage}
  \caption{Mesh families for the numerical tests\label{fig:meshfamilies}}
\end{figure}
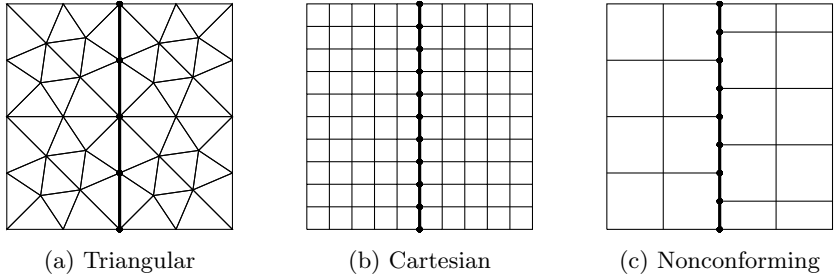
We consider the triangular, Cartesian, and nonconforming mesh families of Figure \ref{fig:meshfamilies} and monitor the following errors:
\begin{align}\label{eq:error.shortcuts}
\underline{\boldsymbol{e}}_{h} \coloneq \boldsymbol{\underline{u}}_h - \widehat{\underline{\boldsymbol{u}}}_h,\qquad \epsilon_{h} \coloneq p_h - \widehat{p}_h,\qquad \underline{\epsilon}_{h}^\Gamma \coloneq \underline{p}_h^\Gamma - \underline{\widehat{p}}_h^\Gamma,\qquad \epsilon_{h}^\Gamma \coloneq p_h^\Gamma - \widehat{p}_h^\Gamma,
\end{align}
where $\widehat{\underline{\boldsymbol{u}}}_h$, $\widehat{p}_h$, and $\underline{\widehat{p}}_h^\Gamma$ are the broken fracture pressures defined by \eqref{def:interpsol}, while $p_h^\Gamma$ and $\widehat{p}_h^\Gamma$ are defined as in Remark~\ref{rem:supercloseness}.
Notice that, while the triangular and Cartesian mesh families can be handled by standard finite element discretizations, this is not the case for the nonconforming mesh.
This kind of nonconforming meshes appear, e.g., when the fracture occurs between two plates, and the mesh of each bulk subdomain is designed to be compliant with the permeability values therein.

{We display in Figure \ref{fig:cv.2} and \ref{fig:cv.1} various error norms as a function of the meshsize, obtained with different values of the normal fracture permeability $\kappa_\Gamma^n \in \{2\ell_\Gamma, 1\}$ in order to show (i) the convergence rates, and (ii) the influence of the global anisotropy ratio $\varrho_\bulk$ on the value of the error, both predicted by Theorem \ref{thm:energyerror}. By choosing $\kappa_\Gamma^n = 2\ell_\Gamma$, we obtain an homogeneous bulk permeability tensor $\boldsymbol{K} = \boldsymbol{I}_2$ so the value of the error is not impacted by the global anisotropy ratio $\varrho_\bulk$ (since it is equal to $1$ in that case); see Figure \ref{fig:cv.2}.
On the other hand, letting $\kappa_\Gamma^n = 1$, we obtain a global anisotropy ratio $\varrho_\bulk = 50$ and we can clearly see the impact on the value of the error $\|\underline{\boldsymbol{e}}_h\|_{\boldsymbol{U},\xi,h}$ in Figure \ref{fig:cv.1}.}
For both configurations, the orders of convergence predicted by Theorem \ref{thm:energyerror} are confirmed numerically for $\|\underline{\boldsymbol{e}}_h\|_{\boldsymbol{U},\xi,h}$ and $\|\underline{\epsilon}_h^\Gamma\|_{\Gamma,h}$ (and even a slightly better convergence rate on Cartesian and nonconforming meshes). 
Moreover, convergence in $h^{k+2}$ is observed for the $L^2$-norms of the bulk and fracture pressures, corresponding to $\|\epsilon_h\|_{\bulk,h}$ and $\|\epsilon_h^\Gamma\|_\Gamma$, respectively; see Remark~\ref{rem:supercloseness} on this point.
\begin{figure}
  \begin{center}
    \begin{minipage}[b]{0.04\columnwidth}
      \begin{tikzpicture}
      	\draw node[scale=0.8,rotate=90]{\hspace*{0.8cm}Error $\|\epsilon_h\|_{B,h}$};
      \end{tikzpicture}
    \end{minipage}
    \hfill
    \begin{minipage}[b]{0.30\columnwidth}
      \begin{tikzpicture}[scale=0.48]
        \begin{loglogaxis}
          \addplot table[x=meshsize,y=err_ph_L2] {k0_mesh1_Kn0dot02.dat};
          \addplot table[x=meshsize,y=err_ph_L2] {k1_mesh1_Kn0dot02.dat};
          \addplot table[x=meshsize,y=err_ph_L2] {k2_mesh1_Kn0dot02.dat};
          \logLogSlopeTriangle{0.90}{0.4}{0.1}{2}{black};
          \logLogSlopeTriangle{0.90}{0.4}{0.1}{3}{black};
          \logLogSlopeTriangle{0.90}{0.4}{0.1}{4}{black};
        \end{loglogaxis}
      \end{tikzpicture}
    \end{minipage}
    \hfill
    \begin{minipage}[b]{0.30\columnwidth}
      \begin{tikzpicture}[scale=0.48]
        \begin{loglogaxis}[
            legend style = { 
              at={(0.5,1.2)},
              anchor = north,
              legend columns=-1
            }
          ]
          \addplot table[x=meshsize,y=err_ph_L2] {k0_mesh2_Kn0dot02.dat};
          \addplot table[x=meshsize,y=err_ph_L2] {k1_mesh2_Kn0dot02.dat};
          \addplot table[x=meshsize,y=err_ph_L2] {k2_mesh2_Kn0dot02.dat};
          \logLogSlopeTriangle{0.90}{0.4}{0.1}{2}{black};
          \logLogSlopeTriangle{0.90}{0.4}{0.1}{3}{black};
          \logLogSlopeTriangle{0.90}{0.4}{0.1}{4}{black};
          \legend{$k=0$ , $k=1$ , $k=2$};
        \end{loglogaxis}
      \end{tikzpicture}
    \end{minipage}
    \hfill
    \begin{minipage}[b]{0.31\columnwidth}
      \begin{tikzpicture}[scale=0.48]
        \begin{loglogaxis}
          \addplot table[x=meshsize,y=err_ph_L2]{k0_mesh5_Kn0dot02.dat};
          \addplot table[x=meshsize,y=err_ph_L2]{k1_mesh5_Kn0dot02.dat};
          \addplot table[x=meshsize,y=err_ph_L2]{k2_mesh5_Kn0dot02.dat};
          \logLogSlopeTriangle{0.90}{0.4}{0.1}{2}{black};
          \logLogSlopeTriangle{0.90}{0.4}{0.1}{3}{black};
          \logLogSlopeTriangle{0.90}{0.4}{0.1}{4}{black};
        \end{loglogaxis}
      \end{tikzpicture}
    \end{minipage}
  \\[0.5cm]
    \begin{minipage}[b]{0.04\columnwidth}
      \begin{tikzpicture}
      	\draw node[scale=0.8,rotate=90]{\hspace*{0.8cm}Error $\|\underline{\boldsymbol{e}}_h\|_{\boldsymbol{U},\xi,h}$};
      \end{tikzpicture}
    \end{minipage}
    \hfill
    \begin{minipage}[b]{0.3\columnwidth}
      \begin{tikzpicture}[scale=0.48]
        \begin{loglogaxis}
          \addplot table[x=meshsize,y=err_Uh_H] {k0_mesh1_Kn0dot02.dat};
          \addplot table[x=meshsize,y=err_Uh_H] {k1_mesh1_Kn0dot02.dat};
          \addplot table[x=meshsize,y=err_Uh_H] {k2_mesh1_Kn0dot02.dat};
          \logLogSlopeTriangle{0.90}{0.4}{0.1}{1}{black};
          \logLogSlopeTriangle{0.90}{0.4}{0.1}{2}{black};
          \logLogSlopeTriangle{0.90}{0.4}{0.1}{3}{black};
        \end{loglogaxis}
      \end{tikzpicture}
    \end{minipage}
    \hfill
    \begin{minipage}[b]{0.3\columnwidth}
      \begin{tikzpicture}[scale=0.48]
        \begin{loglogaxis}
          \addplot table[x=meshsize,y=err_Uh_H] {k0_mesh2_Kn0dot02.dat};
          \addplot table[x=meshsize,y=err_Uh_H] {k1_mesh2_Kn0dot02.dat};
          \addplot table[x=meshsize,y=err_Uh_H] {k2_mesh2_Kn0dot02.dat};
          \logLogSlopeTriangle{0.90}{0.4}{0.1}{1}{black};
          \logLogSlopeTriangle{0.90}{0.4}{0.1}{2}{black};
          \logLogSlopeTriangle{0.90}{0.4}{0.1}{3}{black};
        \end{loglogaxis}
      \end{tikzpicture}
    \end{minipage}
    \hfill
    \begin{minipage}[b]{0.31\columnwidth}
      \begin{tikzpicture}[scale=0.48]
        \begin{loglogaxis}
          \addplot table[x=meshsize,y=err_Uh_H] {k0_mesh5_Kn0dot02.dat};
          \addplot table[x=meshsize,y=err_Uh_H] {k1_mesh5_Kn0dot02.dat};
          \addplot table[x=meshsize,y=err_Uh_H] {k2_mesh5_Kn0dot02.dat};
          \logLogSlopeTriangle{0.90}{0.4}{0.1}{1}{black};
          \logLogSlopeTriangle{0.90}{0.4}{0.1}{2}{black};
          \logLogSlopeTriangle{0.90}{0.4}{0.1}{3}{black};
        \end{loglogaxis}
      \end{tikzpicture}
    \end{minipage}
  \\[0.5cm]
    \begin{minipage}[b]{0.04\columnwidth}
      \begin{tikzpicture}
      	\draw node[scale=0.8,rotate=90]{\hspace*{1cm}Error $\|\epsilon_h^\Gamma\|_{\Gamma}$};
      \end{tikzpicture}
    \end{minipage}
    \hfill
    \begin{minipage}[b]{0.3\columnwidth}
      \begin{tikzpicture}[scale=0.48]
        \begin{loglogaxis}
          \addplot table[x=meshsize,y=err_pFract_L2] {k0_mesh1_Kn0dot02.dat};
          \addplot table[x=meshsize,y=err_pFract_L2] {k1_mesh1_Kn0dot02.dat};
          \addplot table[x=meshsize,y=err_pFract_L2] {k2_mesh1_Kn0dot02.dat};
          \logLogSlopeTriangle{0.90}{0.4}{0.1}{2}{black};
          \logLogSlopeTriangle{0.90}{0.4}{0.1}{3}{black};
          \logLogSlopeTriangle{0.90}{0.4}{0.1}{4}{black};
        \end{loglogaxis}
      \end{tikzpicture}
    \end{minipage}
    \hfill
    \begin{minipage}[b]{0.3\columnwidth}
      \begin{tikzpicture}[scale=0.48]
        \begin{loglogaxis}
          \addplot table[x=meshsize,y=err_pFract_L2] {k0_mesh2_Kn0dot02.dat};
          \addplot table[x=meshsize,y=err_pFract_L2] {k1_mesh2_Kn0dot02.dat};
          \addplot table[x=meshsize,y=err_pFract_L2] {k2_mesh2_Kn0dot02.dat};
          \logLogSlopeTriangle{0.90}{0.4}{0.1}{2}{black};
          \logLogSlopeTriangle{0.90}{0.4}{0.1}{3}{black};
          \logLogSlopeTriangle{0.90}{0.4}{0.1}{4}{black};
        \end{loglogaxis}
      \end{tikzpicture}
    \end{minipage}
    \hfill
    \begin{minipage}[b]{0.31\columnwidth}
      \begin{tikzpicture}[scale=0.48]
        \begin{loglogaxis}
          \addplot table[x=meshsize,y=err_pFract_L2] {k0_mesh5_Kn0dot02.dat};
          \addplot table[x=meshsize,y=err_pFract_L2] {k1_mesh5_Kn0dot02.dat};
          \addplot table[x=meshsize,y=err_pFract_L2] {k2_mesh5_Kn0dot02.dat};
          \logLogSlopeTriangle{0.90}{0.4}{0.1}{2}{black};
          \logLogSlopeTriangle{0.90}{0.4}{0.1}{3}{black};
          \logLogSlopeTriangle{0.90}{0.4}{0.1}{4}{black};
        \end{loglogaxis}
      \end{tikzpicture}
    \end{minipage}
  \\[0.5cm]
    \begin{minipage}[b]{0.04\columnwidth}
      \begin{tikzpicture}
      	\draw node[scale=0.8,rotate=90]{\hspace*{2cm}Error $\|\underline{\epsilon}_h^\Gamma\|_{\Gamma,h}$};
      \end{tikzpicture}
    \end{minipage}
    \hfill
    \begin{minipage}[b]{0.3\columnwidth}
      \begin{tikzpicture}[scale=0.48]
        \begin{loglogaxis}
          \addplot table[x=meshsize,y=err_pFract_en] {k0_mesh1_Kn0dot02.dat};
          \addplot table[x=meshsize,y=err_pFract_en] {k1_mesh1_Kn0dot02.dat};
          \addplot table[x=meshsize,y=err_pFract_en] {k2_mesh1_Kn0dot02.dat};
          \logLogSlopeTriangle{0.90}{0.4}{0.1}{1}{black};
          \logLogSlopeTriangle{0.90}{0.4}{0.1}{2}{black};
          \logLogSlopeTriangle{0.90}{0.4}{0.1}{3}{black};
        \end{loglogaxis}
      \end{tikzpicture}
      \subcaption*{Triangular}
    \end{minipage}
    \hfill
    \begin{minipage}[b]{0.3\columnwidth}
      \begin{tikzpicture}[scale=0.48]
        \begin{loglogaxis}
          \addplot table[x=meshsize,y=err_pFract_en] {k0_mesh2_Kn0dot02.dat};
          \addplot table[x=meshsize,y=err_pFract_en] {k1_mesh2_Kn0dot02.dat};
          \addplot table[x=meshsize,y=err_pFract_en] {k2_mesh2_Kn0dot02.dat};
          \logLogSlopeTriangle{0.90}{0.4}{0.1}{1}{black};
          \logLogSlopeTriangle{0.90}{0.4}{0.1}{2}{black};
          \logLogSlopeTriangle{0.90}{0.4}{0.1}{3}{black};
        \end{loglogaxis}
      \end{tikzpicture}
      \subcaption*{Cartesian}
    \end{minipage}
    \hfill
    \begin{minipage}[b]{0.31\columnwidth}
      \begin{tikzpicture}[scale=0.48]
        \begin{loglogaxis}
          \addplot table[x=meshsize,y=err_pFract_en] {k0_mesh5_Kn0dot02.dat};
          \addplot table[x=meshsize,y=err_pFract_en] {k1_mesh5_Kn0dot02.dat};
          \addplot table[x=meshsize,y=err_pFract_en] {k2_mesh5_Kn0dot02.dat};
          \logLogSlopeTriangle{0.90}{0.4}{0.1}{1}{black};
          \logLogSlopeTriangle{0.90}{0.4}{0.1}{2}{black};
          \logLogSlopeTriangle{0.90}{0.4}{0.1}{3}{black};
        \end{loglogaxis}
      \end{tikzpicture}
      \subcaption*{Nonconforming}
    \end{minipage}
  \end{center}
  \caption{{Errors vs. $h$ for the test case of Section~\ref{sec:num.tests:convergence} on the mesh families introduced in Figure \ref{fig:meshfamilies} with $\kappa_\Gamma^n = 2\ell_\Gamma$}\label{fig:cv.2}}
  
\end{figure}
\begin{figure}
  \begin{center}
    \begin{minipage}[b]{0.04\columnwidth}
      \begin{tikzpicture}
      	\draw node[scale=0.8,rotate=90]{\hspace*{0.8cm}Error $\|\epsilon_h\|_{B,h}$};
      \end{tikzpicture}
    \end{minipage}
    \hfill
    \begin{minipage}[b]{0.30\columnwidth}
      \begin{tikzpicture}[scale=0.48]
        \begin{loglogaxis}
          \addplot table[x=meshsize,y=err_ph_L2] {k0_mesh1_Kn1.dat};
          \addplot table[x=meshsize,y=err_ph_L2] {k1_mesh1_Kn1.dat};
          \addplot table[x=meshsize,y=err_ph_L2] {k2_mesh1_Kn1.dat};
          \logLogSlopeTriangle{0.90}{0.4}{0.1}{2}{black};
          \logLogSlopeTriangle{0.90}{0.4}{0.1}{3}{black};
          \logLogSlopeTriangle{0.90}{0.4}{0.1}{4}{black};
        \end{loglogaxis}
      \end{tikzpicture}
    \end{minipage}
    \hfill
    \begin{minipage}[b]{0.30\columnwidth}
      \begin{tikzpicture}[scale=0.48]
        \begin{loglogaxis}[
            legend style = { 
              at={(0.5,1.2)},
              anchor = north,
              legend columns=-1
            }
          ]
          \addplot table[x=meshsize,y=err_ph_L2] {k0_mesh2_Kn1.dat};
          \addplot table[x=meshsize,y=err_ph_L2] {k1_mesh2_Kn1.dat};
          \addplot table[x=meshsize,y=err_ph_L2] {k2_mesh2_Kn1.dat};
          \logLogSlopeTriangle{0.90}{0.4}{0.1}{2}{black};
          \logLogSlopeTriangle{0.90}{0.4}{0.1}{3}{black};
          \logLogSlopeTriangle{0.90}{0.4}{0.1}{4}{black};
          \legend{$k=0$ , $k=1$ , $k=2$};
        \end{loglogaxis}
      \end{tikzpicture}
    \end{minipage}
    \hfill
    \begin{minipage}[b]{0.31\columnwidth}
      \begin{tikzpicture}[scale=0.48]
        \begin{loglogaxis}
          \addplot table[x=meshsize,y=err_ph_L2]{k0_mesh5_Kn1.dat};
          \addplot table[x=meshsize,y=err_ph_L2]{k1_mesh5_Kn1.dat};
          \addplot table[x=meshsize,y=err_ph_L2]{k2_mesh5_Kn1.dat};
          \logLogSlopeTriangle{0.90}{0.4}{0.1}{2}{black};
          \logLogSlopeTriangle{0.90}{0.4}{0.1}{3}{black};
          \logLogSlopeTriangle{0.90}{0.4}{0.1}{4}{black};
        \end{loglogaxis}
      \end{tikzpicture}
    \end{minipage}
  \\[0.5cm]
    \begin{minipage}[b]{0.04\columnwidth}
      \begin{tikzpicture}
      	\draw node[scale=0.8,rotate=90]{\hspace*{0.8cm}Error $\|\underline{\boldsymbol{e}}_h\|_{\boldsymbol{U},\xi,h}$};
      \end{tikzpicture}
    \end{minipage}
    \hfill
    \begin{minipage}[b]{0.3\columnwidth}
      \begin{tikzpicture}[scale=0.48]
        \begin{loglogaxis}
          \addplot table[x=meshsize,y=err_Uh_H] {k0_mesh1_Kn1.dat};
          \addplot table[x=meshsize,y=err_Uh_H] {k1_mesh1_Kn1.dat};
          \addplot table[x=meshsize,y=err_Uh_H] {k2_mesh1_Kn1.dat};
          \logLogSlopeTriangle{0.90}{0.4}{0.1}{1}{black};
          \logLogSlopeTriangle{0.90}{0.4}{0.1}{2}{black};
          \logLogSlopeTriangle{0.90}{0.4}{0.1}{3}{black};
        \end{loglogaxis}
      \end{tikzpicture}
    \end{minipage}
    \hfill
    \begin{minipage}[b]{0.3\columnwidth}
      \begin{tikzpicture}[scale=0.48]
        \begin{loglogaxis}
          \addplot table[x=meshsize,y=err_Uh_H] {k0_mesh2_Kn1.dat};
          \addplot table[x=meshsize,y=err_Uh_H] {k1_mesh2_Kn1.dat};
          \addplot table[x=meshsize,y=err_Uh_H] {k2_mesh2_Kn1.dat};
          \logLogSlopeTriangle{0.90}{0.4}{0.1}{1}{black};
          \logLogSlopeTriangle{0.90}{0.4}{0.1}{2}{black};
          \logLogSlopeTriangle{0.90}{0.4}{0.1}{3}{black};
        \end{loglogaxis}
      \end{tikzpicture}
    \end{minipage}
    \hfill
    \begin{minipage}[b]{0.31\columnwidth}
      \begin{tikzpicture}[scale=0.48]
        \begin{loglogaxis}
          \addplot table[x=meshsize,y=err_Uh_H] {k0_mesh5_Kn1.dat};
          \addplot table[x=meshsize,y=err_Uh_H] {k1_mesh5_Kn1.dat};
          \addplot table[x=meshsize,y=err_Uh_H] {k2_mesh5_Kn1.dat};
          \logLogSlopeTriangle{0.90}{0.4}{0.1}{1}{black};
          \logLogSlopeTriangle{0.90}{0.4}{0.1}{2}{black};
          \logLogSlopeTriangle{0.90}{0.4}{0.1}{3}{black};
        \end{loglogaxis}
      \end{tikzpicture}
    \end{minipage}
  \\[0.5cm]
    \begin{minipage}[b]{0.04\columnwidth}
      \begin{tikzpicture}
      	\draw node[scale=0.8,rotate=90]{\hspace*{1cm}Error $\|\epsilon_h^\Gamma\|_{\Gamma}$};
      \end{tikzpicture}
    \end{minipage}
    \hfill
    \begin{minipage}[b]{0.3\columnwidth}
      \begin{tikzpicture}[scale=0.48]
        \begin{loglogaxis}
          \addplot table[x=meshsize,y=err_pFract_L2] {k0_mesh1_Kn1.dat};
          \addplot table[x=meshsize,y=err_pFract_L2] {k1_mesh1_Kn1.dat};
          \addplot table[x=meshsize,y=err_pFract_L2] {k2_mesh1_Kn1.dat};
          \logLogSlopeTriangle{0.90}{0.4}{0.1}{2}{black};
          \logLogSlopeTriangle{0.90}{0.4}{0.1}{3}{black};
          \logLogSlopeTriangle{0.90}{0.4}{0.1}{4}{black};
        \end{loglogaxis}
      \end{tikzpicture}
    \end{minipage}
    \hfill
    \begin{minipage}[b]{0.3\columnwidth}
      \begin{tikzpicture}[scale=0.48]
        \begin{loglogaxis}
          \addplot table[x=meshsize,y=err_pFract_L2] {k0_mesh2_Kn1.dat};
          \addplot table[x=meshsize,y=err_pFract_L2] {k1_mesh2_Kn1.dat};
          \addplot table[x=meshsize,y=err_pFract_L2] {k2_mesh2_Kn1.dat};
          \logLogSlopeTriangle{0.90}{0.4}{0.1}{2}{black};
          \logLogSlopeTriangle{0.90}{0.4}{0.1}{3}{black};
          \logLogSlopeTriangle{0.90}{0.4}{0.1}{4}{black};
        \end{loglogaxis}
      \end{tikzpicture}
    \end{minipage}
    \hfill
    \begin{minipage}[b]{0.31\columnwidth}
      \begin{tikzpicture}[scale=0.48]
        \begin{loglogaxis}
          \addplot table[x=meshsize,y=err_pFract_L2] {k0_mesh5_Kn1.dat};
          \addplot table[x=meshsize,y=err_pFract_L2] {k1_mesh5_Kn1.dat};
          \addplot table[x=meshsize,y=err_pFract_L2] {k2_mesh5_Kn1.dat};
          \logLogSlopeTriangle{0.90}{0.4}{0.1}{2}{black};
          \logLogSlopeTriangle{0.90}{0.4}{0.1}{3}{black};
          \logLogSlopeTriangle{0.90}{0.4}{0.1}{4}{black};
        \end{loglogaxis}
      \end{tikzpicture}
    \end{minipage}
  \\[0.5cm]
    \begin{minipage}[b]{0.04\columnwidth}
      \begin{tikzpicture}
      	\draw node[scale=0.8,rotate=90]{\hspace*{2cm}Error $\|\underline{\epsilon}_h^\Gamma\|_{\Gamma,h}$};
      \end{tikzpicture}
    \end{minipage}
    \hfill
    \begin{minipage}[b]{0.3\columnwidth}
      \begin{tikzpicture}[scale=0.48]
        \begin{loglogaxis}
          \addplot table[x=meshsize,y=err_pFract_en] {k0_mesh1_Kn1.dat};
          \addplot table[x=meshsize,y=err_pFract_en] {k1_mesh1_Kn1.dat};
          \addplot table[x=meshsize,y=err_pFract_en] {k2_mesh1_Kn1.dat};
          \logLogSlopeTriangle{0.90}{0.4}{0.1}{1}{black};
          \logLogSlopeTriangle{0.90}{0.4}{0.1}{2}{black};
          \logLogSlopeTriangle{0.90}{0.4}{0.1}{3}{black};
        \end{loglogaxis}
      \end{tikzpicture}
      \subcaption*{Triangular}
    \end{minipage}
    \hfill
    \begin{minipage}[b]{0.3\columnwidth}
      \begin{tikzpicture}[scale=0.48]
        \begin{loglogaxis}
          \addplot table[x=meshsize,y=err_pFract_en] {k0_mesh2_Kn1.dat};
          \addplot table[x=meshsize,y=err_pFract_en] {k1_mesh2_Kn1.dat};
          \addplot table[x=meshsize,y=err_pFract_en] {k2_mesh2_Kn1.dat};
          \logLogSlopeTriangle{0.90}{0.4}{0.1}{1}{black};
          \logLogSlopeTriangle{0.90}{0.4}{0.1}{2}{black};
          \logLogSlopeTriangle{0.90}{0.4}{0.1}{3}{black};
        \end{loglogaxis}
      \end{tikzpicture}
      \subcaption*{Cartesian}
    \end{minipage}
    \hfill
    \begin{minipage}[b]{0.31\columnwidth}
      \begin{tikzpicture}[scale=0.48]
        \begin{loglogaxis}
          \addplot table[x=meshsize,y=err_pFract_en] {k0_mesh5_Kn1.dat};
          \addplot table[x=meshsize,y=err_pFract_en] {k1_mesh5_Kn1.dat};
          \addplot table[x=meshsize,y=err_pFract_en] {k2_mesh5_Kn1.dat};
          \logLogSlopeTriangle{0.90}{0.4}{0.1}{1}{black};
          \logLogSlopeTriangle{0.90}{0.4}{0.1}{2}{black};
          \logLogSlopeTriangle{0.90}{0.4}{0.1}{3}{black};
        \end{loglogaxis}
      \end{tikzpicture}
      \subcaption*{Nonconforming}
    \end{minipage}
  \end{center}
  \caption{{Errors vs. $h$ for the test case of Section~\ref{sec:num.tests:convergence} on the mesh families introduced in Figure \ref{fig:meshfamilies} with $\kappa_\Gamma^n = 1$}\label{fig:cv.1}}
\end{figure}

\subsection{Quarter five-spot problem\label{sec:fivespot}}

The five-spot pattern is a standard configuration in petroleum engineering used to displace and extract the oil in the basement by injecting water, steam, or gas; see, e.g., \cite{Coats.George.Chu.Marcum:74, Todd.ODell.Hirasaki:72}. The injection well sits in the center of a square, and four production wells are located at the corners. Due to the symmetry of the problem, we consider here only a quarter five-spot pattern on $\Omega= (0,1)^2$ with injection and production wells located in $(0,0)$ and $(1,1)$, respectively, and modelled by the source term $f :\Omega_\bulk\rightarrow\mathbb{R}$ such that
\begin{equation*}
\begin{aligned}
f(\boldsymbol{x}) = 200\Big(
	\tanh \big(
		200 &(0.025 - (x_1^2+x_2^2)^{\nicefrac12})
	\big)
	\\
	&- \tanh \big(
		200 (0.025 - ((x_1-1)^2+(x_2-1)^2)^{\nicefrac12})
	\big)
\Big).
\end{aligned}
\end{equation*}

\paragraph{Test 1: No fracture} In Figure~\ref{fig:5spot:nofract}, we display the pressure distribution when the domain $\Omega$ contains no fracture, i.e. $\Omega_\bulk= \Omega$; see Figure~\ref{fig:5spot:domainConfig}. The bulk tensor is given by $\boldsymbol{K} = \boldsymbol{I}_2$, and we enforce homogeneous Neumann and Dirichlet boundary conditions, respectively, on (see Figure~\ref{fig:5spot:domainConfig})
\begin{align*}
  \partial\Omega_{\rm B}^{\rm N} &= \{\boldsymbol{x}\in\partial\Omega_\bulk ~|~ x_1 = 0 \text{ or } x_2 = 0 \text{ or } \left(x_1 > \nicefrac{3}{4} \text{ and } x_2 > \nicefrac{3}{4}\right)\},
  \\
  \partial\Omega_{\rm B}^{\rm D} &= \{\boldsymbol{x}\in\partial\Omega_\bulk ~|~ \left(x_1 = 1 \text{ and } x_2 \leq \nicefrac{3}{4}\right) \text{ or } \left(x_2 = 1 \text{ and } x_1 \leq \nicefrac{3}{4}\right)\}.
\end{align*}
Since the bulk permeability is the identity matrix and there is no fracture inside the domain, the pressure decreases continuously moving from the injection well towards the production well.

\paragraph{Test 2: Permeable fracture} We now let the domain $\Omega$ be crossed by the fracture $\Gamma = \{\boldsymbol{x}\in\Omega ~|~ x_2 = 1-x_1\}$ of constant thickness $\ell_\Gamma = 10^{-2}$, and we let $f_\Gamma\equiv 0$.
In addition to the bulk boundary conditions described in \textbf{Test 1}, we enforce homogeneous Dirichlet boundary conditions on $\partial\Gamma^{\rm D} =\partial\Gamma$; see Figure~\ref{fig:5spot:domainConfig}.
The bulk and fracture permeability parameters are such that%
\begin{align*}
\boldsymbol{K} = \boldsymbol{I}_2\qquad\kappa_\Gamma^n = 1,\qquad
\kappa_\Gamma^\tau = 100,
\end{align*}
and are chosen in such a way that the fracture is permeable, which means that the fluid should be attracted by it.
The bulk pressure corresponding to this configuration is depicted in Figure~\ref{fig:bulkpressure:perm}. As shown in Figure~\ref{fig:pressurecut}, we remark that (i) in $\Omega_{\bulk,1}$, we have a lower pressure, and that the pressure decreases more slowly than in \textbf{Test 1} going from the injection well towards the fracture and (ii) in $\Omega_{\bulk,2}$, the flow caused by the production well attracts, less significantly than in \textbf{Test 1}, the flow outside the fracture.

\paragraph{Test 3: Impermeable fracture} We next consider the case of an impermeable fracture: we keep the same domain configuration as before, but we let
\begin{align*}
\kappa_\Gamma^n = 10^{-2},\qquad\kappa_\Gamma^\tau = 1.
\end{align*}
Unlike before, we observe in this case a significant jump of the bulk pressure across the fracture $\Gamma$, see Figure~\ref{fig:bulkpressure:imperm}.
This can be better appreciated in Figure~\ref{fig:pressurecut}, which contains the plots of the bulk pressure over the line $x_1=x_2$ for the various configurations considered.
\begin{figure}
\begin{center}
\begin{minipage}[b]{0.32\columnwidth}\center
\includegraphics[width=4cm]{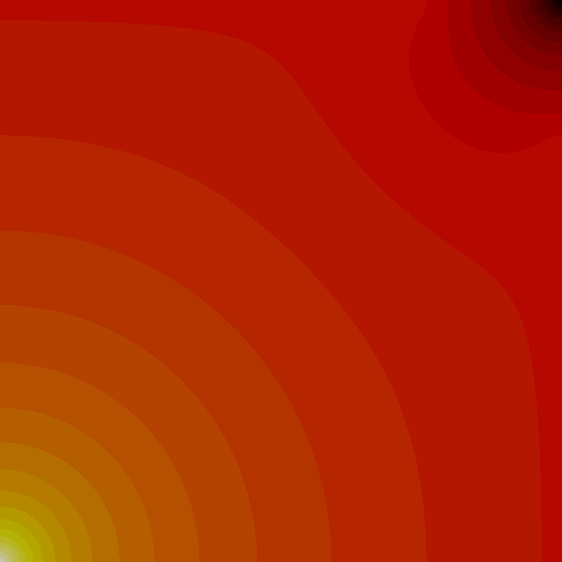} 
\vfill
\begin{minipage}[b]{1\columnwidth}\center\tiny
$-4.70\cdot10^{-1}$\hfill\includegraphics[width=0.3\columnwidth]{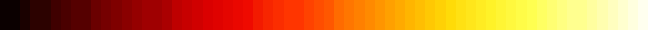} \hfill$6.17\cdot10^{-1}$
\end{minipage}
\subcaption{No fracture\label{fig:5spot:nofract}}
\end{minipage}
\hfill
\begin{minipage}[b]{0.32\columnwidth}\center
\includegraphics[width=4cm]{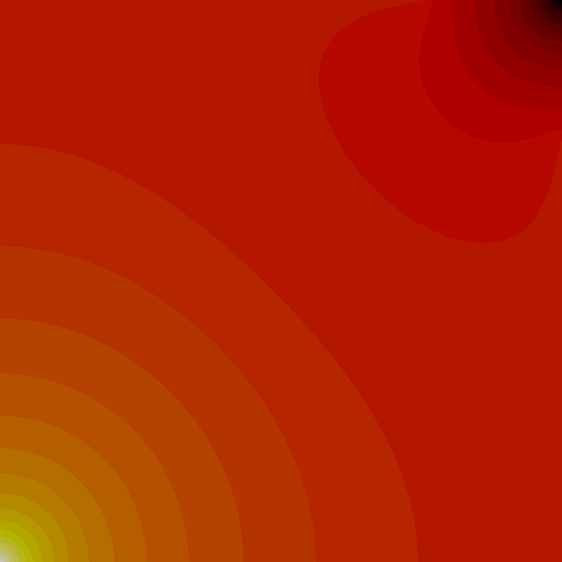} 
\vfill
\begin{minipage}[b]{1\columnwidth}\center\tiny
$-4.73\cdot10^{-1}$\hfill\includegraphics[width=0.3\columnwidth]{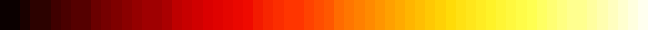} \hfill$5.96\cdot10^{-1}$
\end{minipage}
\subcaption{Permeable fracture\label{fig:bulkpressure:perm}}
\end{minipage}
\hfill
\begin{minipage}[b]{0.32\columnwidth}\center
\includegraphics[width=4cm]{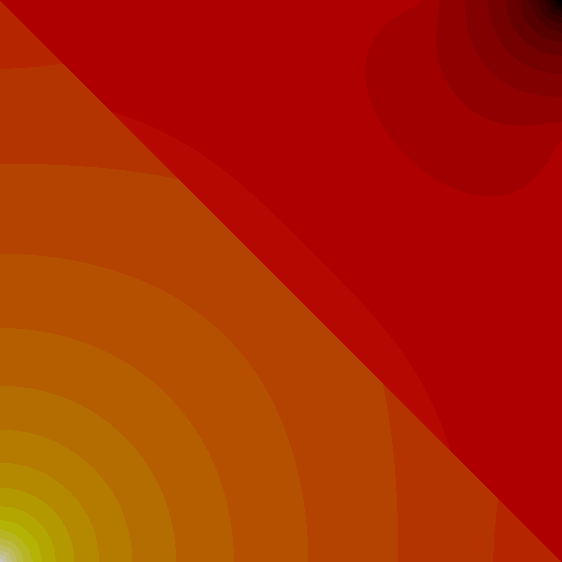} 
\vfill
\begin{minipage}[b]{1\columnwidth}\center\tiny
$-4.71\cdot10^{-1}$\hfill\includegraphics[width=0.3\columnwidth]{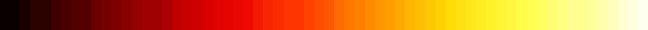} \hfill$7.54\cdot10^{-1}$
\end{minipage}
\subcaption{Impermeable fracture\label{fig:bulkpressure:imperm}}
\end{minipage}
\end{center}
\caption{\label{pic:5spotpatterns}Bulk pressure for the test cases of Section~\ref{sec:fivespot} on a triangular mesh ($h=7.68\cdot 10^{-3}$) with $k=2$}
\end{figure}

\paragraph{Flow across the fracture}
Since an exact solution is not available for the previous test cases, we provide a quantitative assessment of the convergence by monitoring the quantity
$$ 
M^{k,h}_{\nicefrac{\mathrm{p}}{\mathrm{i}}}\coloneq\sum_{F\in\f_h^\Gamma} \int_F [\![\underline{\boldsymbol{u}}_h]\!]_F,
$$
which corresponds to the global flux entering the fracture for the permeable (subscript p) and impermeable (subscript i) fractured test cases.
The index $k$ refers to the polynomial degree $k\in\{0,1,2\}$, and the index $h$ to the meshsize.
Five refinement levels of the triangular mesh depicted in Figure~\ref{fig:meshfamilies:triangular} are considered.
We plot in Figure~\ref{fig:plot.meas.err.perm} and~\ref{fig:plot.meas.err.imperm} the errors $\epsilon_{\nicefrac{\mathrm{p}}{\mathrm{i}}}\coloneq |{\rm M}^{\mathrm{r}}_{\nicefrac{\mathrm{p}}{\mathrm{i}}} - {\rm M}_{\nicefrac{\mathrm{p}}{\mathrm{i}}}^{k,h}|$ for the permeable/impermeable case ($\nicefrac{\mathrm{p}}{\mathrm{i}}$), where ${\rm M}^{r}_{\nicefrac{\mathrm{p}}{\mathrm{i}}}$ denotes the reference value obtained with $k=2$ on the fifth mesh refinement corresponding to $h=9.60\cdot 10^{-4}$.
In both cases we have convergence, with respect to the polynomial degree and the meshsize, to the reference values ${\rm M}^{r}_{\mathrm{p}} = 9.96242\cdot 10^{-2}$ and ${\rm M}^{r}_{\mathrm{i}} = 3.19922\cdot 10^{-2}$.
For the permeable test case depicted in Figure~\ref{fig:plot.meas.err.perm}, after the second refinement, increasing the polynomial degree only modestly affect the error decay, which suggests that convergence may be limited by the local regularity of the exact solution.
For the impermeable test case depicted in Figure~\ref{fig:plot.meas.err.imperm}, on the other hand, the local regularity of the exact solution seems sufficient to benefit from the increase of the approximation order.

\begin{figure}
\begin{minipage}[b]{0.68\columnwidth}
\begin{minipage}[b]{0.44\columnwidth}\center
\begin{tikzpicture}[scale=1.3]\scriptsize
\draw[fill=black!10!white] (0.4,0) arc (0:90:0.4) -- (0,0) -- (0.4,0);
\draw (0.15,0.15) node {$\boldsymbol{\oplus}$};
\draw[fill=black!10!white] (1.6,2) arc (180:270:0.4) -- (2,2) -- (1.6,2);
\draw (1.85,1.85) node {$\boldsymbol{\ominus}$};
\draw (0,0) -- (2,0) -- (2,2) -- (0,2) -- (0,0);
\draw (1,1) node{$\Omega$};
\draw (-0.15, 1) node[rotate=90]{$\boldsymbol{u}\cdot\normal_{\partial\Omega} = 0$};
\draw (1, -0.15) node {$\boldsymbol{u}\cdot\normal_{\partial\Omega} = 0$};
\draw (1.5,2) node {$\mid$};
\draw (2,1.5) node[rotate=90] {$\mid$};
\draw (2.15, 2.15) node[rotate=-45]{$\boldsymbol{u}\cdot\normal_{\partial\Omega} = 0$};
\draw (0.75,2.15) node {$p = 0$};
\draw (2.15, 0.75) node[rotate=90]{$p = 0$};
\end{tikzpicture}
\end{minipage}
\hfill
\begin{minipage}[b]{0.55\columnwidth}\center
\begin{tikzpicture}[scale=1.3]\scriptsize
\draw[fill=black!10!white] (0.4,0) arc (0:90:0.4) -- (0,0) -- (0.4,0);
\draw (0.15,0.15) node {$\boldsymbol{\oplus}$};
\draw[fill=black!10!white] (2.1,2) arc (180:270:0.4) -- (2.5,2) -- (2.1,2);
\draw (2.35,1.85) node {$\boldsymbol{\ominus}$};
\draw (0,0) -- (2,0) -- (0,2) -- (0,0);
\draw (0.5,0.5) node{$\Omega_{\bulk,1}$};
\draw (2.25,0) -- (0.25,2);
\draw[color=white, fill=white] (1.25,1.) circle (0.1);
\draw (1.25,1.) node{$\Gamma$};
\draw (2.5,0) -- (2.5,2) -- (0.5,2) -- (2.5,0);
\draw (2.0,1.5) node{$\Omega_{\bulk,2}$};
\draw (-0.15, 1) node[rotate=90]{$\boldsymbol{u}\cdot\normal_{\partial\Omega} = 0$};
\draw (1, -0.15) node {$\boldsymbol{u}\cdot\normal_{\partial\Omega} = 0$};
\draw (2,2) node {$\mid$};
\draw (2.5,1.5) node[rotate=90] {$\mid$};
\draw (2.65, 2.15) node[rotate=-45]{$\boldsymbol{u}\cdot\normal_{\partial\Omega} = 0$};
\draw (0.25,2.15) node {$p_\Gamma = 0$};
\draw (2.25,-0.15) node {$p_\Gamma = 0$};
\draw (1.25,2.15) node {$p = 0$};
\draw (2.65, 0.75) node[rotate=90]{$p = 0$};
\end{tikzpicture}
\end{minipage}
\subcaption{Domain configurations without (left) and with (right) fracture\label{fig:5spot:domainConfig}}
\end{minipage}
\hfill
  \begin{minipage}[b]{0.314\columnwidth}\center
    \includegraphics[width=1\columnwidth]{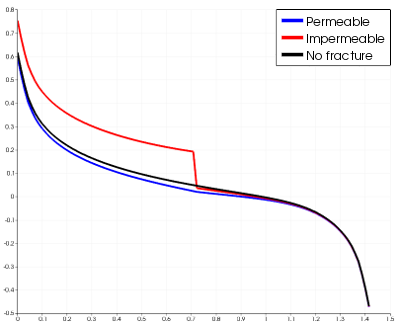} 
    \subcaption{Bulk pressure over $x_1 = x_2$\label{fig:pressurecut}}
  \end{minipage}
\vfill
  \begin{minipage}[b]{0.45\columnwidth}\center
    \begin{tikzpicture}[scale=0.7]
      \begin{axis}[
          legend style = {
            at={(0.05,0.95)}, 
            anchor = north west
          }
        ]
        \addplot table[x=meshsize,y=perm_k0] {conv_jump.dat};
        \addplot table[x=meshsize,y=perm_k1] {conv_jump.dat};
        \addplot table[x=meshsize,y=perm_k2] {conv_jump.dat};
        \legend{$k=0$ , $k=1$ , $k=2$};
      \end{axis}
    \end{tikzpicture}
    \subcaption{$\epsilon_{\rm p}$ vs. $h$\label{fig:plot.meas.err.perm}}
  \end{minipage}
  \hfill
  \begin{minipage}[b]{0.45\columnwidth}\center
    \begin{tikzpicture}[scale=0.7]
      \begin{axis}[
          legend style = {
            at={(0.05,0.95)}, 
            anchor = north west
          }
        ]
        \addplot table[x=meshsize,y=imperm_k0] {conv_jump.dat};
        \addplot table[x=meshsize,y=imperm_k1] {conv_jump.dat};
        \addplot table[x=meshsize,y=imperm_k2] {conv_jump.dat};
        \legend{$k=0$ , $k=1$ , $k=2$};
      \end{axis}
    \end{tikzpicture}
    \subcaption{$\epsilon_{\rm i}$ vs. $h$\label{fig:plot.meas.err.imperm}}
  \end{minipage}
  \caption{Domain configurations, pressure along the line $x_{1}=x_{2}$, and errors on the flow across the fracture vs. $h$ for the test cases of Section~\ref{sec:fivespot}.}
\end{figure}

\subsection{Porous medium with random permeability}\label{sec:num.tests:random}

To show the influence of the bulk permeability tensor on the solution, we consider two piecewise constants functions $\mu_1,\mu_2:\Omega_{\bulk}\to(0,2)$ and the heterogeneous and possibly anisotropic bulk tensor $\boldsymbol{K}$ given by
\begin{align*}
  \boldsymbol{K} \coloneq \left[
  \begin{tabular}{cc}
    $\mu_1$ & $0$ \\
    $0$ & $\mu_2$
  \end{tabular}\right].
\end{align*}
For the following tests, we use a $64\times 64$ uniform Cartesian mesh ($h=3.91\cdot 10^{-3}$) and $k=2$.
The domain $\Omega \coloneq (0,1)^2$ is crossed by a fracture $\Gamma \coloneq \{0.5\}\times(0,1)$ of constant thickness $\ell_\Gamma\coloneq 10^{-2}$. We set the fracture permeability parameters $\kappa_\Gamma^n \coloneq 1$ and $\kappa_\Gamma^\tau \coloneq 100$, corresponding to a permeable fracture.
The source terms are constant and such that $f\equiv 4$ and $f_\Gamma\equiv 4$.
We enforce homogeneous Neumann boundary conditions on $\partial\Omega_{\bulk}^{\rm N} \coloneq \{\boldsymbol{x}\in\partial\Omega_\bulk ~|~ x_1 \in\{0,1\}\}$ and Dirichlet boundary conditions on $\partial\Omega_{\bulk}^{\rm D} \coloneq \{\boldsymbol{x}\in\partial\Omega_\bulk ~|~ x_2 \in\{0,1\}\}$ and $\partial\Gamma^{\rm D} \coloneq\partial\Gamma$ with
\begin{align*}
g_B(\boldsymbol{x}) \coloneq x_2\quad\forall\boldsymbol{x}\in\partial\Omega_{\bulk}^{\rm D},\qquad\qquad g_\Gamma(\boldsymbol{x}) \coloneq x_2\quad\forall\boldsymbol{x}\in\partial\Gamma^{\rm D}.
\end{align*}
\paragraph{Test 1: Homogeneous permeability} In Figure~\ref{pic:norandom}, we depict the bulk pressure distribution corresponding to $\mu_1=\mu_2\coloneq 1$. As expected, the flow is moving towards the fracture but less and less significantly as we approach the bottom of the domain since the pressure decreases with respect to the boundary conditions.

\begin{figure}
\begin{center}
\begin{minipage}[b]{0.49\columnwidth}
\begin{tikzpicture}[scale=1.5]\scriptsize
\draw (0,0) -- (1,0) -- (1,2) -- (0,2) -- (0,0);
\draw (0.25,1.75) node{$\Omega_\bulk$};
\draw (1.5,0) -- (1.5,2);
\draw (1.75,0.9) node{$\Gamma$};
\draw (2,0) -- (3,0) -- (3,2) -- (2,2) -- (2,0);
\draw (2.75,0.25) node{$\Omega_\bulk$};
\draw (-0.25, 1) node[rotate=90]{$\boldsymbol{u}\cdot\normal_{\partial\Omega} = 0$};
\draw (3.25, 1) node[rotate=90]{$\boldsymbol{u}\cdot\normal_{\partial\Omega} = 0$};
\draw (0.5,-0.25) node {$p=0$};
\draw (0.5,2.25) node {$p=1$};
\draw (2.5,-0.25) node {$p=0$};
\draw (2.5,2.25) node {$p=1$};
\draw (1.5,2.25) node {$p_\Gamma = 1$};
\draw (1.5,-0.25) node {$p_\Gamma = 0$};
\end{tikzpicture}
\subcaption{Domain configuration}
\end{minipage}
\hfill
\begin{minipage}[b]{0.49\columnwidth}\center
\begin{minipage}[b]{0.25\columnwidth}\center\scriptsize
$1.05$\vfill\includegraphics[width=4cm,angle=90]{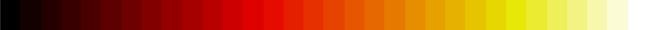} \vfill$0$
\end{minipage}
\hfill
\includegraphics[width=4.5cm]{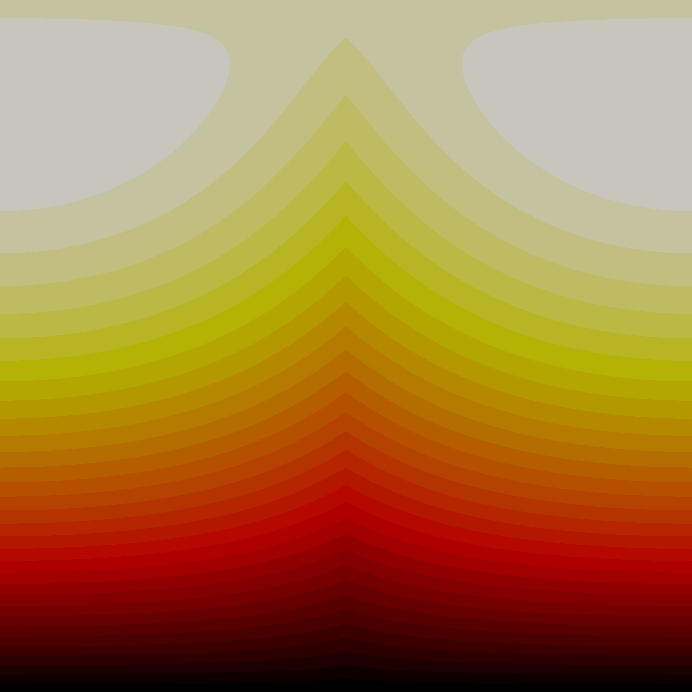} 
\subcaption{Bulk pressure $p$}
\end{minipage}
\end{center}
\caption{\label{pic:norandom}Bulk pressure for the first test case of Section~\ref{sec:num.tests:random} (homogeneous permeability).}
\end{figure}

\paragraph{Test 2: Random permeability} We next define inside the bulk region $\Omega_\bulk$ horizontal layers of random permeabilities which are separated by the fracture, and let the functions $\mu_1$ and $\mu_2$ take, inside each element, a random value between $0$ and $1$ on one side of each layer, and between $1$ and $2$ on the other side; see Figure~\ref{pic:randomvalues}. 
High permeability zones are prone to let the fluid flow towards the fracture, in contrast to the low permeability zones in which the pressure variations are larger; see Figure~\ref{pic:randomperlayer:pressure}, where dashed lines represent the different layers described above.
This qualitative behaviour is well captured by the numerical solution.

\begin{figure}
\begin{center}
\begin{minipage}[c]{0.49\columnwidth}\center
\begin{minipage}[c]{0.4\columnwidth}\center
\includegraphics[width=2.5cm]{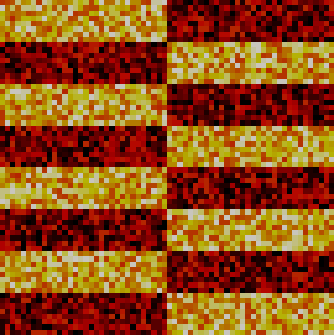}\vspace*{2.5cm}
\end{minipage}
\hfill
\begin{minipage}[c]{0.1\columnwidth}\center\scriptsize
$2$\vfill\includegraphics[width=2cm,angle=90]{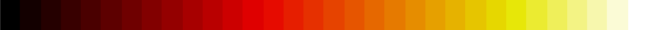} \vfill$10^{-2}$
\end{minipage}
\hfill
\begin{minipage}[c]{0.4\columnwidth}\center
\vspace*{2.5cm}\includegraphics[width=2.5cm]{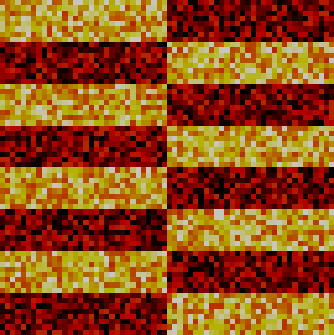}
\end{minipage}
\subcaption{Values of $\mu_1$ (left) and $\mu_2$ (right)\label{pic:randomvalues}}
\end{minipage}
\hfill
\begin{minipage}[c]{0.49\columnwidth}\center
\begin{minipage}[b]{0.25\columnwidth}\center\scriptsize
$1.14$\vfill\includegraphics[width=4cm,angle=90]{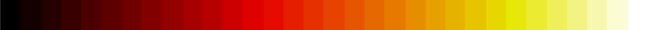} \vfill$-2.76\cdot 10^{-3}$
\end{minipage}
\hfill
\begin{tikzpicture}
\node[inner sep=0pt] (russell) at (0,0)
    {\includegraphics[width=4.5cm]{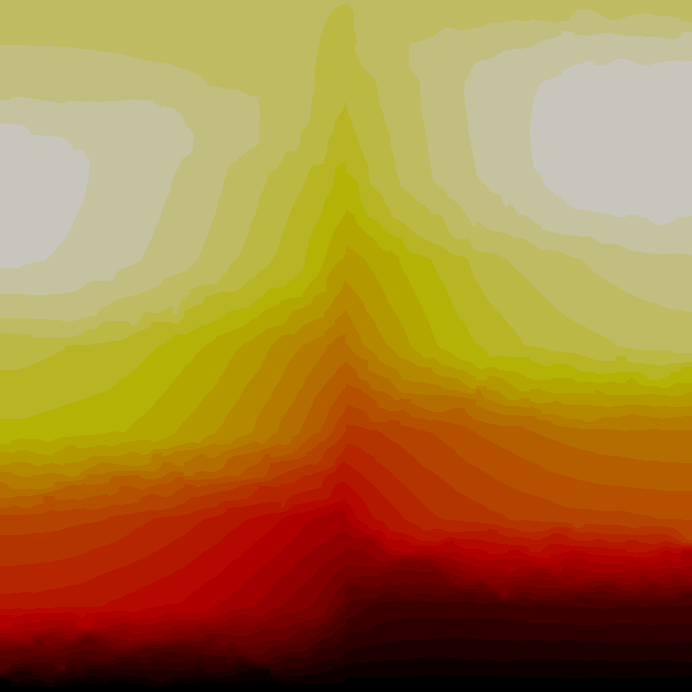}};
\draw[dashed, opacity=0.2] (0,-2.25) -- (0,2.25);
\draw[dashed, opacity=0.2] (-2.25,-1.6875) -- (2.25,-1.6875);
\draw[dashed, opacity=0.2] (-2.25,-1.125) -- (2.25,-1.125);
\draw[dashed, opacity=0.2] (-2.25,-0.5625) -- (2.25,-0.5625);
\draw[dashed, opacity=0.2] (-2.25,0) -- (2.25,0);
\draw[dashed, opacity=0.2] (-2.25,0.5625) -- (2.25,0.5625);
\draw[dashed, opacity=0.2] (-2.25,1.125) -- (2.25,1.125);
\draw[dashed, opacity=0.2] (-2.25,1.6875) -- (2.25,1.6875);
\end{tikzpicture}
\subcaption{Bulk pressure $p$\label{pic:randomperlayer:pressure}}
\end{minipage}
\end{center}
\caption{\label{pic:randomperlayer}Permeability components distribution and bulk pressure for the second test case of Section~\ref{sec:num.tests:random} (random permeability).}
\end{figure}


\section{Stability analysis}\label{sec:stability:proof}

This section contains the proof of Theorem~\ref{thm:stability} preceeded by the required preliminary results.
We recall that, for the sake of simplicity, we work here under the assumption that homogeneous Dirichlet boundary conditions are enforced on both the bulk and the fracture pressures; see~\eqref{eq:dirichlet.only}.
This simplifies the arguments of Lemma~\ref{lem:inf-sup.bh} below.

Recalling the definition~\eqref{def:ah} of $a_h^{\xi}$, and using~\eqref{proof:equivMTST} together with Cauchy--Schwarz inequalities, we infer the existence of a real number $\eta_a>0$ independent of $h$ and of the problem data such that, for all $\underline{\boldsymbol{v}}_h\in\widecheck{\underline{\boldsymbol{U}}}_h^k$,
\begin{equation}\label{eq:stability:ah}
  \eta_a^{-1}\|\underline{\boldsymbol{v}}_h\|_{\boldsymbol{U},\xi,h}^2
  \le \|\underline{\boldsymbol{v}}_h\|_{a,\xi,h}^2\coloneq a_h^{\xi}(\underline{\boldsymbol{v}}_h,\underline{\boldsymbol{v}}_h)
  \le \eta_a \varrho_\bulk\|\underline{\boldsymbol{v}}_h\|_{\boldsymbol{U},\xi,h}^2,
\end{equation}
with global bulk anisotropy ratio $\varrho_\bulk$ defined by~\eqref{eq:varrho.B}.
Similarly, summing~\eqref{lem:defstabmF} over $F\in\f_h^\Gamma$, it is readily inferred that it holds, for all $\underline{q}_h^\Gamma\in\underline{P}_{\Gamma,h}^k$,
\begin{equation}\label{eq:stability.dh}
  \eta_d^{-1}\|\underline{q}_h^\Gamma\|_{\Gamma,h}^2
  \le d_h(\underline{q}_h^\Gamma,\underline{q}_h^\Gamma)
  \le \eta_d \|\underline{q}_h^\Gamma\|_{\Gamma,h}^2.
\end{equation}

The following lemma contains a stability result for the bilinear form $b_h$.
\begin{lemma}[Inf-sup stability of $b_h$]\label{lem:inf-sup.bh}
  There is a real number $\beta>0$ independent of $h$, but possibly depending on $\varrho$, $k$, and on the problem geometry and data, such that, for all $q_h\in P_{\bulk,h}^k$,
  \begin{equation}\label{eq:inf-sup.bh}
    \|q_h\|_{\bulk,h}
    \le \beta
    \sup_{\underline{\boldsymbol{w}}_h\in\underline{\boldsymbol{U}}_{h,0}^k, \|\underline{\boldsymbol{w}}_h\|_{\boldsymbol{U},\xi,h}=1}b_h(\underline{\boldsymbol{w}}_h,q_h).
  \end{equation}
\end{lemma}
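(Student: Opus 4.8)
The standard route to an inf-sup condition of this type is a Fortin argument: given $q_h \in P_{\bulk,h}^k$, exhibit an explicit $\underline{\boldsymbol{w}}_h \in \underline{\boldsymbol{U}}_{h,0}^k$ for which $b_h(\underline{\boldsymbol{w}}_h, q_h)$ controls $\|q_h\|_{\bulk,h}^2$ from below while $\|\underline{\boldsymbol{w}}_h\|_{\boldsymbol{U},\xi,h}$ is controlled from above by $\|q_h\|_{\bulk,h}$. First I would invoke the continuous inf-sup stability (surjectivity of the divergence) on the split domain: since $\partial\Omega_\bulk^{\rm D}$ has positive measure, for the given $q_h$ there exists $\boldsymbol{v} \in \boldsymbol{U}$ with $\nabla\cdot\boldsymbol{v} = q_h$ on $\Omega_\bulk$, with $\boldsymbol{v}\cdot\boldsymbol{n}_{\partial\Omega} = 0$ on $\partial\Omega_\bulk^{\rm N}$, and with $\|\boldsymbol{v}\|_{H^1(\Omega_\bulk)} \le C_\Omega \|q_h\|_{\bulk,h}$ — here one must be slightly careful that $\boldsymbol{v}$ is allowed to have a two-valued normal trace on $\Gamma$ lying in $L^2(\Gamma)^2$, which is precisely the definition of $\boldsymbol{U}$; choosing $\boldsymbol{v}$ with sufficient elementwise $H^1$ regularity is possible (e.g. solve an auxiliary elliptic problem on each subdomain $\Omega_{\bulk,i}$ with mixed boundary data). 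Crucially $\boldsymbol{v}\in H^1(\t_h)^2$, so $\underline{\boldsymbol{I}}_h^k\boldsymbol{v}$ is well defined.

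The second step is to set $\underline{\boldsymbol{w}}_h \coloneq \underline{\boldsymbol{I}}_h^k\boldsymbol{v}$ and check it lies in $\underline{\boldsymbol{U}}_{h,0}^k$: the interface jump conditions $[\![\underline{\boldsymbol{w}}_h]\!]_F = 0$ for $F\in\f_h^{\rm i}\setminus\f_h^\Gamma$ follow because the two adjacent elements interpolate the \emph{same} single-valued normal trace $\boldsymbol{v}\cdot\normal_{TF}$ via the same $L^2$-projection $\pi_F^k$ (with opposite signs of the outward normals, so the sum vanishes), and $v_F = 0$ on $\f_h^{\rm N}$ follows from $\pi_F^k(\boldsymbol{v}\cdot\boldsymbol{n}_{\partial\Omega}) = 0$ there. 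Then I use the commuting property~\eqref{lem:commutdivbulk}, namely $\DT\underline{\boldsymbol{I}}_T^k(\boldsymbol{v}_{|T}) = \pi_T^k(\nabla\cdot\boldsymbol{v}_{|T}) = \pi_T^k q_T = q_T$ since $q_T\in\mathbb{P}^k(T)$; hence by definition~\eqref{def:bh} of $b_h$,
\begin{equation*}
  b_h(\underline{\boldsymbol{w}}_h, q_h) = \sum_{T\in\t_h}(\DT\underline{\boldsymbol{I}}_T^k\boldsymbol{v}, q_T)_T = \sum_{T\in\t_h}(q_T, q_T)_T = \|q_h\|_{\bulk,h}^2.
\end{equation*}

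The third step is the bound $\|\underline{\boldsymbol{w}}_h\|_{\boldsymbol{U},\xi,h} \le C\|q_h\|_{\bulk,h}$. For the element- and face-based contributions making up $\sum_T\|\underline{\boldsymbol{w}}_T\|_{\boldsymbol{U},T}^2$, I would use the $L^2$-stability of $\pi_F^k$ for the face DOFs together with a discrete trace inequality to pass $h_F\|\pi_F^k(\boldsymbol{v}\cdot\normal_{TF})\|_F^2 \lesssim \|\boldsymbol{v}\|_T^2 + h_T^2|\boldsymbol{v}|_{H^1(T)}^2$, and for the element DOFs $\boldsymbol{K}_T\nabla y_T$ I would test the Neumann problem~\eqref{def:interpolatorbulk:neumann} with $q_T = y_T$ and use uniform ellipticity of $\boldsymbol{K}_T$ to get $(\uKBT)^{-1}\|\boldsymbol{K}_T\nabla y_T\|_T^2 \le (\lKBT)^{-1}\|\boldsymbol{K}_T\nabla y_T\|_T\,\|\boldsymbol{v}\|_T \cdot$ (a constant), so that $(\uKBT)^{-1/2}\|\boldsymbol{K}_T\nabla y_T\|_T \lesssim \varrho_{\bulk,T}^{1/2}\|\boldsymbol{v}\|_T$; summing over $T$ and bounding $\|\boldsymbol{v}\|_{H^1(\t_h)^2}$ by $C_\Omega\|q_h\|_{\bulk,h}$ gives the element/face part, at the price of a constant depending on the problem data (through $\varrho_\bulk$ and the $\lKB,\uKB$ bounds). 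For the fracture seminorm part $|\underline{\boldsymbol{w}}_h|_{\xi,h}^2$, I would estimate $\lambda_F^\xi\|[\![\underline{\boldsymbol{w}}_h]\!]_F\|_F^2$ and $\lambda_F\|\{\!\{\underline{\boldsymbol{w}}_h\}\!\}_F\|_F^2$ by $\ulkn$ times $\sum_{T\in\t_F}\|\pi_F^k(\boldsymbol{v}_{|T}\cdot\normal_{TF})\|_F^2$, again using trace and projector stability, noting $h_F \le h \le \mathrm{diam}(\Omega)$ to absorb the missing $h_F$ weight. Collecting everything, $\|\underline{\boldsymbol{w}}_h\|_{\boldsymbol{U},\xi,h} \le \widetilde{C}\|q_h\|_{\bulk,h}$ with $\widetilde{C}$ depending on $\varrho$, $k$, the geometry and the data but not on $h$, and then $\|q_h\|_{\bulk,h}^2 = b_h(\underline{\boldsymbol{w}}_h,q_h) \le \|\underline{\boldsymbol{w}}_h\|_{\boldsymbol{U},\xi,h}\sup_{\|\cdot\|=1}b_h(\cdot,q_h)$ yields~\eqref{eq:inf-sup.bh} with $\beta = \widetilde{C}$.

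\textbf{Main obstacle.} The delicate point is the very first step: constructing the continuous lifting $\boldsymbol{v}$ with the right boundary behaviour on the \emph{split} domain $\Omega_\bulk = \Omega_{\bulk,1}\cup\Omega_{\bulk,2}$ and, simultaneously, enough elementwise $H^1$-regularity for $\underline{\boldsymbol{I}}_h^k$ to be defined and the trace estimates to close — all with a stability constant $C_\Omega$ depending only on the geometry. One clean way is to solve on each $\Omega_{\bulk,i}$ the Neumann/Dirichlet Laplace problem $-\Delta\phi_i = q_h$ in $\Omega_{\bulk,i}$, $\partial_n\phi_i = 0$ on $\partial\Omega_{\bulk,i}^{\rm N}\cup\Gamma$, $\phi_i = 0$ on $\partial\Omega_{\bulk,i}^{\rm D}$, and set $\boldsymbol{v}_{|\Omega_{\bulk,i}} = \nabla\phi_i$; elliptic regularity on the polygonal subdomains gives $\boldsymbol{v}\in H^1$ on each subdomain (possibly only $H^{1+s}$, $s>0$, near reentrant corners, but $H^1$ elementwise suffices since mesh elements are away from corners up to the matching simplicial submesh argument), $\nabla\cdot\boldsymbol{v} = q_h$, and the homogeneous Neumann trace on $\Gamma$ makes $[\![\boldsymbol{v}]\!]_\Gamma\cdot\normal_\Gamma$ and $\{\!\{\boldsymbol{v}\}\!\}_\Gamma\cdot\normal_\Gamma$ well behaved in $L^2(\Gamma)$ — in fact the normal component vanishes there, which makes the fracture seminorm bound immediate. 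This is the argument alluded to in the text as being simplified by~\eqref{eq:dirichlet.only}; I would spell it out and then the remaining steps are routine HHO estimates.
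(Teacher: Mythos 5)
Your overall architecture is the same as the paper's: a Fortin argument in which you lift $q_h$ to a divergence-preimage $\boldsymbol{v}$ on the two bulk subdomains, interpolate through $\underline{\boldsymbol{I}}_h^k$, use the commuting property~\eqref{lem:commutdivbulk} to get $b_h(\underline{\boldsymbol{I}}_h^k\boldsymbol{v},q_h)=\|q_h\|_{\bulk,h}^2$, and bound $\|\underline{\boldsymbol{I}}_h^k\boldsymbol{v}\|_{\boldsymbol{U},\xi,h}$ by projector stability and trace inequalities. The steps after the lifting are essentially those of the paper and are fine (up to the harmless sign slip $\nabla\cdot\nabla\phi_i=-q_h$, and data-dependent constants, which the statement allows).

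The genuine gap is in your construction of the lifting itself. You propose $\boldsymbol{v}_{|\Omega_{\bulk,i}}=\nabla\phi_i$ with $-\Delta\phi_i=q_h$, homogeneous Neumann data on $\Gamma\cup\partial\Omega^{\rm N}_{\bulk,i}$ and Dirichlet data on $\partial\Omega^{\rm D}_{\bulk,i}$, and you need $\boldsymbol{v}\in H^1(\t_h)^2$ with $\sum_{T}\|\boldsymbol{v}\|_{H^1(T)^2}^2\lesssim\|q_h\|_{\bulk,h}^2$ uniformly in $h$. This requires, in effect, a global bound $\|\phi_i\|_{H^2(\Omega_{\bulk,i})}\lesssim\|q_h\|_{\Omega_{\bulk,i}}$, and such a bound is generally false on polygonal subdomains with mixed Dirichlet--Neumann conditions: at reentrant corners and, more to the point, at every Dirichlet--Neumann transition (e.g.\ where $\Gamma$ meets $\partial\Omega$) the solution is only $H^{1+s}$ with $s<1$ (as low as $s=\nicefrac12-\varepsilon$ at a flat D/N junction), so $\nabla\phi_i$ fails to be in $H^1$ on the elements touching those points. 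Your remark that ``mesh elements are away from corners'' does not rescue this: some elements always touch the corners/junctions, and even elementwise $H^1$ membership without a uniform-in-$h$ constant would not close the estimate~\eqref{eq:inf-sup.bh:est.norm.Ih.bulk}-type bound. The paper avoids elliptic corner regularity altogether: it first takes any $\boldsymbol{v}_i\in\boldsymbol{H}(\Hdiv;\Omega_{\bulk,i})$ with $\nabla\cdot\boldsymbol{v}_i=q_h$ (surjectivity of the divergence, valid on Lipschitz domains), and then invokes a smoothing result (\cite[Section~4.1]{Gatica:14}, a Bogovskii-type $H^1$ right inverse of the divergence) to produce $\tilde{\boldsymbol{v}}_i\in H^1(\Omega_{\bulk,i})^2$ with the \emph{same} divergence and $\|\tilde{\boldsymbol{v}}_i\|_{H^1(\Omega_{\bulk,i})^2}\lesssim\|q_h\|_{\Omega_{\bulk,i}}$; this is exactly where the simplifying assumption~\eqref{eq:dirichlet.only} is used. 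Replacing your auxiliary elliptic problem by this surjectivity-plus-smoothing step repairs the proof; the rest of your argument then goes through as written.
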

\begin{proof}
  We use the standard Fortin argument relying on the continuous inf-sup condition. In what follows, $a\lesssim b$ stands for the inequality $a\le C b$ with real number $C>0$ having the same dependencies as $\beta$ in~\eqref{eq:inf-sup.bh}.
  Let $q_h\in P_{\bulk,h}^k$.
  For each $i\in\{1,2\}$, the surjectivity of the continuous divergence operator from $\boldsymbol{H}(\Hdiv;\Omega_{\bulk,i})$ onto $L^2(\Omega_{\bulk,i})$ (see, e.g.,~\cite[Section~2.4.1]{Gatica:14}) yields the existence of $\boldsymbol{v}_i\in\boldsymbol{H}(\Hdiv;\Omega_{\bulk,i})$ such that
  \begin{equation}\label{eq:inf-sup.bh:v}
    \text{$\nabla\cdot\boldsymbol{v}_i=q_h$ in $\Omega_{\bulk,i}$
      and $\|\boldsymbol{v}_i\|_{\boldsymbol{H}(\Hdiv;\Omega_{\bulk,i})}\lesssim\|q_h\|_{\Omega_{\bulk,i}}$,}
  \end{equation}
  with hidden multiplicative constant depending on $\Omega_{\bulk,i}$.
  Let $\boldsymbol{v}:\Omega_\bulk\to\Real^2$ be such that $\boldsymbol{v}_{|\Omega_{\bulk,i}}=\boldsymbol{v}_i$ for $i\in\{1,2\}$.
  This function cannot be interpolated through $\underline{\boldsymbol{I}}_h^k$, as it does not belong to the space $H^1(\t_h)^2$ introduced in Section~\ref{sec:discrete:global:norms}; see also Remark~\ref{rem:U^+(T)} on this point.
  However, since we have assumed Dirichlet boundary conditions (cf.~\eqref{eq:dirichlet.only}), following the procedure described in~\cite[Section~4.1]{Gatica:14} one can construct smoothings $\tilde{\boldsymbol{v}}_i\in H^1(\Omega_{\bulk,i})^2$, $i\in\{1,2\}$, such that
    \begin{equation}\label{eq:inf-sup.bh:tv}
      \text{$\nabla\cdot\tilde{\boldsymbol{v}}_i=\nabla\cdot\boldsymbol{v}_i$ in $\Omega_{\bulk,i}$
        and $\|\tilde{\boldsymbol{v}}_i\|_{H^1(\Omega_{\bulk,i})^2}\lesssim\|\boldsymbol{v}_i\|_{\boldsymbol{H}(\Hdiv;\Omega_{\bulk,i})}$.}
  \end{equation}
  Let now $\tilde{\boldsymbol{v}}:\Omega_\bulk\to\Real^2$ be such that $\tilde{\boldsymbol{v}}_{|\Omega_{\bulk,i}}=\tilde{\boldsymbol{v}}_i$ for $i\in\{1,2\}$.
  The function $\tilde{\boldsymbol{v}}$ belongs to $\boldsymbol{U}\cap H^1(\t_h)^2$, and it can be easily checked that $\underline{\boldsymbol{I}}_h^k\tilde{\boldsymbol{v}}\in\underline{\boldsymbol{U}}_{h,0}^k$.
  Recalling the definition~\eqref{eq:normUT.normPT} of the $\|{\cdot}\|_{\boldsymbol{U},T}$-norm and using the boundedness of the $L^2$-orthogonal projector in the corresponding $L^2$-norm together with local continuous trace inequalities (see, e.g.,~\cite[Lemma~1.49]{Di-Pietro.Ern:12}), one has that
  \begin{equation}\label{eq:inf-sup.bh:est.norm.Ih.bulk}
    \sum_{T\in\t_h}\|\underline{\boldsymbol{I}}_T^k\tilde{\boldsymbol{v}}\|_{\boldsymbol{U},T}^2 
    \lesssim
    \sum_{i=1}^2\|\tilde{\boldsymbol{v}}_i\|_{H^1(\Omega_{\bulk,i})^2}^2
    \lesssim
    \sum_{i=1}^2\|\boldsymbol{v}_i\|_{\boldsymbol{H}(\Hdiv;\Omega_{\bulk,i})}^2
    \lesssim\|q_h\|_{\bulk,h}^2,
  \end{equation}
  where we have used~\eqref{eq:inf-sup.bh:tv} in the second inequality and~\eqref{eq:inf-sup.bh:v} in the third.
  The hidden constant depends here on $\lKB^{-1}$.
  Moreover, using a triangle inequality, the fact that $\lambda_F^\xi \le \lambda_F = (\lambda_\Gamma)_{|F} \le \ulkn$ (see~\eqref{eq:bnd.kappan}) for all $F\in\f_h^\Gamma$, the boundedness of the $L^2$-orthogonal projector, and a global continuous trace inequality in each bulk subdomain $\Omega_{\bulk,i}$, $i\in\{1,2\}$, we get
  \begin{equation}\label{eq:inf-sup.bh:est.norm.Ih.fract}
    |\underline{\boldsymbol{I}}_h^k\tilde{\boldsymbol{v}}|_{\xi,h}^2 
    \lesssim
    \sum_{i=1}^2 \|(\tilde{\boldsymbol{v}}_i)_{|\Gamma}\cdot\normal_{\Gamma}\|_{\Gamma}^2
    \lesssim
    \sum_{i=1}^2\|\tilde{\boldsymbol{v}}_i\|_{H^1(\Omega_{\bulk,i})^2}^2
    \lesssim\|q_h\|_{\bulk,h}^2,
  \end{equation}  
  where we have used~\eqref{eq:inf-sup.bh:tv} and~\eqref{eq:inf-sup.bh:v} in the third inequality.
  The hidden constant depends here on $\ulkn$ and on the inverse of the diameters of the bulk subdomains.
  Combining~\eqref{eq:inf-sup.bh:est.norm.Ih.bulk} and~\eqref{eq:inf-sup.bh:est.norm.Ih.fract}, and naming $\beta$ the hidden constant, we conclude that
  \begin{equation}\label{eq:inf-sup.bh:est.norm.Ih}
    \|\underline{\boldsymbol{I}}_h^k\tilde{\boldsymbol{v}}\|_{\boldsymbol{U},\xi,h} \le \beta\|q_h\|_{\bulk,h}.
  \end{equation}
  Finally,~\eqref{eq:inf-sup.bh:tv} together with the commuting property~\eqref{lem:commutdivbulk} of the local divergence reconstruction operator gives
  \begin{equation}\label{eq:inf-sup.bh:div.tv}
    \pi_T^k(\nabla\cdot\boldsymbol{v})
    = \pi_T^k(\nabla\cdot\tilde{\boldsymbol{v}})
    = D_T^k\underline{\boldsymbol{I}}_T^k\tilde{\boldsymbol{v}}_{|T}\qquad\forall T\in\t_h.
  \end{equation}
  Gathering all of the above properties, we infer that
  $$
  \|q_h\|_{\bulk,h}^2  
  = b(\boldsymbol{v},q_h)
  = b(\tilde{\boldsymbol{v}},q_h)
  = b_h(\underline{\boldsymbol{I}}_h^k\tilde{\boldsymbol{v}},q_h),
  $$
  where we have used~\eqref{eq:inf-sup.bh:v} together with the definition~\eqref{def:cont.bilin.forms} of $b$ in the first equality,~\eqref{eq:inf-sup.bh:tv} in the second, and~\eqref{eq:inf-sup.bh:div.tv} along with the definition~\eqref{eq:discret:2} of $b_h$ to conclude.
  Finally, factoring $\|\underline{\boldsymbol{I}}_h^k\tilde{\boldsymbol{v}}\|_{\boldsymbol{U},\xi,h}$, using the linearity of $b_h$ in its first argument, and denoting by $\$$ the supremum in~\eqref{eq:inf-sup.bh}, we get
  $$
  \|q_h\|_{\bulk,h}^2
  \le \$ \|\underline{\boldsymbol{I}}_h^k\tilde{\boldsymbol{v}}\|_{\boldsymbol{U},\xi,h}
  \le\beta\$\|q_h\|_{\bulk,h},
  $$
  where the conclusion follows from~\eqref{eq:inf-sup.bh:est.norm.Ih}.
  This proves~\eqref{eq:inf-sup.bh}.
\end{proof}

We next recall the following Poincar\'{e} inequality, which is a special case of the discrete Sobolev embeddings proved in~\cite[Proposition 5.4]{Di-Pietro.Droniou:16}: There exist a real number $C_{\rm P}>0$ independent of $h$ and of the problem data (but possibly depending on $\Gamma$ and $k$) such that, for all $\underline{q}_h^\Gamma = ((q_F^\Gamma)_{F\in\f_h^\Gamma},(q_V^\Gamma)_{V\in{\cal V}_h})\in\underline{P}_{\Gamma,h,0}^k$,
\begin{equation}\label{eq:poincare}
  \|q_h^\Gamma\|_\Gamma\le C_{\rm P} \lKG^{-\nicefrac12}\|\underline{q}_h^\Gamma\|_{\Gamma,h},
\end{equation}
where $q_h^\Gamma$ is the piecewise polynomial function on $\Gamma$ such that $(q_h^\Gamma)_{|F}=q_F^\Gamma$ for all $F\in\f_h^\Gamma$.

Using the Cauchy--Schwarz inequality together with the fact that $\lambda_F^\xi=(\lambda_\Gamma^\xi)_{|F}\ge\llkn\left(\frac\xi2-\frac14\right)$ for all $F\in\f_h^\Gamma$ (see~\eqref{eq:lambda.lambdaxi} and and~\eqref{eq:bnd.kappan}) and the Poincar\'{e} inequality~\eqref{eq:poincare}, we can prove the following boundedness property for the bilinear form $c_h$ defined by~\eqref{def:ch}:
For all $\underline{\boldsymbol{v}}_h\in\underline{\boldsymbol{U}}_{h,0}^k$ and all $\underline{q}_h^\Gamma\in\underline{P}_{\Gamma,h,0}^k$, it holds that
\begin{equation}\label{eq:boundedness:ch}
  |c_h(\underline{\boldsymbol{v}}_h,\underline{q}_h^\Gamma)|
  \le\eta_c\llkn^{-\nicefrac12}|\underline{\boldsymbol{v}}_h|_{\xi,h} \|\underline{q}_h^\Gamma\|_{\Gamma,h},\qquad
  \eta_c\coloneq C_{\rm P}\left(\frac\xi2-\frac14\right)^{-\nicefrac12}.
\end{equation}

We are now ready to prove Theorem~\ref{thm:stability}.
\begin{proof}[Proof of Theorem~\ref{thm:stability}]
  Let $\underline{\boldsymbol{z}}_h\coloneq(\boldsymbol{\underline{w}}_h,r_h,\underline{r}_h^\Gamma)\in\underline{\boldsymbol{X}}_h^k$.
  In the spirit of \cite[Lemma~4.38]{Ern.Guermond:04}, the proof proceeds in three steps.

  \paragraph{Step 1: Control of the flux in the bulk and of the pressure in the fracture}
  Using the coercivity~\eqref{eq:stability:ah} of the bilinear form $a_h^\xi$ and~\eqref{eq:stability.dh} of the bilinear form $d_h$, it is inferred that
  \begin{align}\label{eq:inf-sup.ah:1}
    \mathcal{A}_h^\xi(\underline{\boldsymbol{z}}_h,\underline{\boldsymbol{z}}_h)
    \ge \eta_a^{-1}\|\underline{\boldsymbol{w}}_h\|_{\boldsymbol{U},\xi,h}^2
    + \eta_d^{-1}\|\underline{r}_h^\Gamma\|_{\Gamma,h}^2.
  \end{align}

  \paragraph{Step 2: Control of the pressure in the bulk}
  The inf-sup condition~\eqref{eq:inf-sup.bh} on the bilinear form $b_h$ gives the existence of $\underline{\boldsymbol{v}}_h \in \underline{\boldsymbol{U}}_{h,0}^k$ such that
  \begin{equation}\label{eq:inf-sup.ah:vh}
    \text{$\|r_h\|_{\bulk,h}^2 = -b_h(\underline{\boldsymbol{v}}_h,r_h)$
      and $\|\underline{\boldsymbol{v}}_h\|_{\boldsymbol{U},\xi,h} \le\beta\|r_h\|_{\bulk,h}$.}
  \end{equation}
  Using the definition \eqref{def:Ahxi} of $\mathcal{A}_h^\xi$, it is readily inferred that
  \begin{equation}\label{eq:inf-sup.ah:2}
    \begin{aligned}
    \mathcal{A}_h^\xi(\underline{\boldsymbol{z}}_h, (\underline{\boldsymbol{v}}_h,0,\underline{0}))
    &= \|r_h\|_{\bulk,h}^2 + a_h^\xi(\underline{\boldsymbol{w}}_h,\underline{\boldsymbol{v}}_h) + c_h(\underline{\boldsymbol{v}}_h,\underline{r}_h^\Gamma)
    \\
    &\ge \|r_h\|_{\bulk,h}^2 - |a_h^\xi(\underline{\boldsymbol{w}}_h,\underline{\boldsymbol{v}}_h)| - |c_h(\underline{\boldsymbol{v}}_h,\underline{r}_h^\Gamma)|.
    \end{aligned}
  \end{equation}
  Using the continuity of $a_h^\xi$ expressed by the second inequality in~\eqref{eq:stability:ah} followed by Young's inequality, we infer that it holds, for all $\epsilon>0$,
  \begin{equation}\label{eq:inf-sup.ah:3}
    |a_h^\xi(\underline{\boldsymbol{w}}_h,\underline{\boldsymbol{v}}_h)|
    \le\eta_a\varrho_\bulk\|\underline{\boldsymbol{w}}_h\|_{\boldsymbol{U},\xi,h}\|\underline{\boldsymbol{v}}_h\|_{\boldsymbol{U},\xi,h}
    \le\frac{\epsilon}{4}\|\underline{\boldsymbol{v}}_h\|_{\boldsymbol{U},\xi,h}^2 + \frac{(\eta_a\varrho_\bulk)^2}{\epsilon}\|\underline{\boldsymbol{w}}_h\|_{\boldsymbol{U},\xi,h}^2.
  \end{equation}
  Similarly, the boundedness~\eqref{eq:boundedness:ch} of $c_h$ followed by Young's inequality gives
  \begin{equation}\label{eq:inf-sup.ah:4}
    |c_h(\underline{\boldsymbol{v}}_h,\underline{r}_h^\Gamma)|
    \le\eta_c\llkn^{-\nicefrac12}\|\underline{\boldsymbol{v}}_h\|_{\boldsymbol{U},\xi,h}\|\underline{r}_h^\Gamma\|_{\Gamma,h}
    \le\frac{\epsilon}{4}\|\underline{\boldsymbol{v}}_h\|_{\boldsymbol{U},\xi,h}^2
    + \frac{\eta_c^2}{\epsilon\llkn}\|\underline{r}_h^\Gamma\|_{\Gamma,h}^2.
  \end{equation}
  Plugging~\eqref{eq:inf-sup.ah:3} and~\eqref{eq:inf-sup.ah:4} into~\eqref{eq:inf-sup.ah:2}, selecting $\epsilon=\beta^{-2}$, and using the bound in~\eqref{eq:inf-sup.ah:vh}, we arrive at
  \begin{equation}\label{eq:inf-sup.ah:5}
    \mathcal{A}_h^\xi(\underline{\boldsymbol{z}}_h, (\underline{\boldsymbol{v}}_h,0,\underline{0}))
    \ge\frac12\|r_h\|_{\bulk,h}^2 - C_1 \|\underline{\boldsymbol{w}}_h\|_{\boldsymbol{U},\xi,h}^2
    - C_2 \|\underline{r}_h^\Gamma\|_{\Gamma,h}^2,
  \end{equation}
  with $C_1\coloneq(\eta_a\beta\varrho_\bulk)^2$ and $C_2\coloneq(\eta_c\beta)^2/\llkn$.

  \paragraph{Step 3: Conclusion}
  Setting $\alpha\coloneq(1+C_1\eta_a+C_2\eta_d)^{-1}/2$ and combining~\eqref{eq:inf-sup.ah:1} with~\eqref{eq:inf-sup.ah:5}, we infer that
  \begin{multline*}
    \mathcal{A}_h^\xi(\underline{\boldsymbol{z}}_h, (1-\alpha)\underline{\boldsymbol{z}}_h + \alpha(\underline{\boldsymbol{v}}_h,0,\underline{0}))
    \\
    \ge
    \frac\alpha2\|r_h\|_{\bulk,h}^2
    + \eta_a^{-1}\left(1-\alpha(1+C_1\eta_a)\right)\|\underline{\boldsymbol{w}}_h\|_{\boldsymbol{U},\xi,h}^2
    + \eta_d^{-1}\left(1-\alpha(1+C_2\eta_d)\right)\|\underline{r}_h^\Gamma\|_{\Gamma,h}^2.
  \end{multline*}
  Denoting by $\$$ the supremum in the right-hand side of~\eqref{thm:stability:eq}, we infer from the previous inequality that
  \begin{equation}\label{eq:inf-sup.ah:7}
    C_3\|\underline{\boldsymbol{z}}_h\|_{\boldsymbol{X},h}^2\le
    \mathcal{A}_h^\xi(\underline{\boldsymbol{z}}_h, (1-\alpha)\underline{\boldsymbol{z}}_h + \alpha(\underline{\boldsymbol{v}}_h,0,\underline{0}))
    \le \$\|(1-\alpha)\underline{\boldsymbol{z}}_h+\alpha(\underline{\boldsymbol{v}}_h,0,\underline{0})\|_{\boldsymbol{X},h}
  \end{equation}
  with $C_3\coloneq\min\left(\nicefrac\alpha2,\eta_a^{-1}(1-\alpha(1+C_1\eta_a)),\eta_d^{-1}(1-\alpha(1+C_2\eta_d))\right)>0$.
  Finally, observing that, by the definition~\eqref{def:Xnorm} of the $\|{\cdot}\|_{\boldsymbol{X},h}$-norm together with~\eqref{eq:inf-sup.ah:vh}, it holds that
  $\|(\underline{\boldsymbol{v}}_h,0,\underline{0})\|_{\boldsymbol{X},h}\le\beta\|r_h\|_{\bulk,h}\le\beta\|\underline{\boldsymbol{z}}_h\|_{\boldsymbol{X},h},$
  \eqref{eq:inf-sup.ah:7} gives~\eqref{thm:stability:eq} with $\gamma=C_3^{-1}(1+\beta)$.
\end{proof}


\section{Error analysis}\label{sec:error.analysis}

This section contains the proof of Theorem~\ref{thm:energyerror} preceeded by the required preliminary results.
As in the previous section, we work under the assumption that homogeneous Dirichlet boundary conditions are enforced on both the bulk and the fracture pressures; see~\eqref{eq:dirichlet.only}.
In what follows, $a\lesssim b$ means $a\le Cb$ with real number $C>0$ independent of $h$ and of the problem data, but possibly depending on $\varrho$, $k$, and on the problem geometry.

For all $T\in\t_h$, we define the local elliptic projection $\widecheck{p}_T\in\mathbb{P}^{k+1}(T)$ of the bulk pressure $p$ such that
\begin{equation}\label{eq:cpT}
  \text{%
    $(\boldsymbol{K}_T\nabla(\widecheck{p}_T-p),\nabla w)_T = 0$ for all $w\in\mathbb{P}^{k+1}(T)$ and $(\widecheck{p}_T-p,1)_T = 0$.%
  }
\end{equation}
Adapting the results of~\cite[Lemma 3]{Di-Pietro.Ern.ea:14}, it can be proved that the following approximation properties hold for all $T\in\t_h$ provided that $p_{|T}\in H^{k+2}(T)$:
\begin{equation}\label{eq:approx:cpT}
  \begin{aligned}
    &\|\boldsymbol{K}_T^{\nicefrac12}\nabla(p-\widecheck{p}_T)\|_T
    + h_T^{\nicefrac12}\|\boldsymbol{K}_T^{\nicefrac12}\nabla(p_{|T}-\widecheck{p}_T)\|_{\partial T}
    \\
    &\qquad
    + \lKBT^{\nicefrac12}h_T^{-1}\|p-\widecheck{p}_T\|_T
    + \lKBT^{\nicefrac12}h_T^{-\nicefrac12}\|p_{|T}-\widecheck{p}_T\|_{\partial T}
    \lesssim\uKBT^{\nicefrac12} h_T^{k+1}\|p\|_{H^{k+2}(T)}.
  \end{aligned}
\end{equation}
Note that we need to specify that the trace of $p$ and of the corresponding flux are taken from the side of $T$ in boundary norms, since these quantities are possibly two-valued on fracture faces.
We also introduce the broken polynomial function $\widecheck{p}_h$ such that
$$
(\widecheck{p}_h)_{|T}=\widecheck{p}_T\qquad\forall T\in\t_h.
$$

The following boundedness result for the bilinear form $b_h$ defined by~\eqref{def:bh} can be proved using~\eqref{proof:stabDiv}:
For all $\underline{\boldsymbol{v}}_h\in\widecheck{\underline{\boldsymbol{U}}}_h^k$ and all $q_h\in P_{\bulk,h}^k$,
\begin{equation}\label{eq:continuity:bh}
  \begin{aligned}
    |b_h(\underline{\boldsymbol{v}}_h,q_h)|
    &\lesssim\left(
    \sum_{T\in\t_h} \|\underline{\boldsymbol{v}}_T\|_{\boldsymbol{U},T}^2
    \right)^{\nicefrac12}\times
    \left(
    \sum_{T\in\t_h} \uKBT h_T^{-2}\|q_T\|_T^2
    \right)^{\nicefrac12}\\
    &\lesssim\|\underline{\boldsymbol{v}}_h\|_{m,h}
    \left(
    \sum_{T\in\t_h} \uKBT h_T^{-2}\|q_T\|_T^2
    \right)^{\nicefrac12},
  \end{aligned}
\end{equation}
where, to obtain the second inequality, we have used the first bound in~\eqref{proof:equivMTST} and summed over $T\in\t_h$ to infer
$$
\sum_{T\in\t_h}
\|\underline{\boldsymbol{v}}_T\|_{\boldsymbol{U},T}^2
\lesssim\|\underline{\boldsymbol{v}}_h\|_{m,h}^2
\coloneq\sum_{T\in\t_h}\|\underline{\boldsymbol{v}}_T\|_{m,T}^2.
$$

Finally, we note the following consistency property for the bilinear form $d_h$ defined by~\eqref{def:dh}, which can be inferred from~\cite[Theorem~8]{Di-Pietro.Ern.ea:14}:
For all $q\in H_0^1(\Gamma)$ such that $q\in H^{k+2}(F)$ for all $F\in\f_h^\Gamma$,
\begin{equation}\label{eq:consistency:dh}
  \begin{aligned}
  \sup_{\underline{r}_h^\Gamma\in\underline{P}_{\Gamma,h,0}^k, \|\underline{r}_h^\Gamma\|_{\Gamma,h}=1} \hspace{-0.5ex}\Bigg(
  \sum_{F\in\f_h^\Gamma}(\nabla_\tau{\cdot}(K_F\nabla_\tau q),r_F^\Gamma)_F
  &+d_h(\underline{I}_h^k q,\underline{r}_h^\Gamma)
  \Bigg)
  \\
  & \lesssim\hspace{-0.5ex}\left(
  \sum_{F\in\f_h^\Gamma} K_F h_F^{2(k+1)}\|q\|_{H^{k+2}(F)}^2
  \right)^{\nicefrac12}\hspace{-2ex}.
  \end{aligned}
\end{equation}

We are now ready to prove the error estimate.
\begin{proof}[Proof of Theorem \ref{thm:energyerror}]
  The proof proceeds in five steps:
  in {\bf Step 1} we derive an estimate for the discretization error measured by the left-hand side of~\eqref{eq:energyerror} in terms of a conformity error;
  in {\bf Step 2} we bound the different components of the conformity error;
  in {\bf Step 3} we combine the previous results to obtain~\eqref{eq:energyerror}.
  {\bf Steps 4-5} contain the proofs of technical results used in {\bf Step 2}.

  \begin{remark}[Role of {\bf Step 1}]
    The discretization error in the left-hand side of~\eqref{eq:energyerror} can be clearly estimated in terms of a conformity error using the inf-sup condition on $\mathcal{A}_h^\xi$ proved in Theorem~\ref{thm:stability}. Proceeding this way, however, we would end up with constants depending on the problem data (and, in particular, on the global bulk anisotropy ratio $\varrho_\bulk$ defined by~\eqref{eq:varrho.B}) in the right-hand side of~\eqref{eq:energyerror}.
    This is to be avoided if one wants to have a sharp indication of the behaviour of the method for strongly anisotropic bulk permeability tensors.
  \end{remark}
  
  In what follows, we use the shortcut notation for the error components introduced in~\eqref{eq:error.shortcuts}.

  \paragraph{Step 1: Basic error estimate}
  Recalling the definitions~\eqref{def:Ahxi} of $\mathcal{A}_h^{\xi}$ and~\eqref{eq:stability:ah} of the norm $\|{\cdot}\|_{a,\xi,h}$, and using the coercivity of $d_h$ expressed by the first inequality in~\eqref{eq:stability.dh}, we have that
  \begin{equation}\label{eq:basic.est:1}
    \|\underline{\boldsymbol{e}}_h\|_{a,\xi,h}^2
    + \|\underline{\epsilon}_h^\Gamma\|_{\Gamma,h}^2
    \lesssim
    \mathcal{A}_h^\xi((\underline{\boldsymbol{e}}_h,\epsilon_h,\underline{\epsilon}_h^\Gamma),(\underline{\boldsymbol{e}}_h,\epsilon_h,\underline{\epsilon}_h^\Gamma))
    = 
    \mathcal{E}_{h,1}(\underline{\boldsymbol{e}}_h)
    + \mathcal{E}_{h,2}(\epsilon_h)
    + \mathcal{E}_{h,3}(\underline{\epsilon}_h^\Gamma),
  \end{equation}
  where the linear forms
  $\mathcal{E}_{h,1}:\boldsymbol{\underline{U}}_{h,0}^k\to\Real$,
  $\mathcal{E}_{h,2}:P_{\bulk,h}^k\to\Real$, and
  $\mathcal{E}_{h,3}:\underline{P}_{\Gamma,h,0}^k\to\Real$
  correspond to the components of the conformity error and are defined such that
  \begin{subequations}\label{eq:err.decomp}
    \begin{align}\label{def:Eh1}
      \mathcal{E}_{h,1}(\boldsymbol{\underline{v}}_h)
      &\coloneq
      - a_h^\xi(\widehat{\boldsymbol{\underline{u}}}_h,\boldsymbol{\underline{v}}_h)
      + b_h(\underline{\boldsymbol{v}}_h,\widehat{p}_h)
      - c_h(\underline{\boldsymbol{v}}_h,\widehat{\underline{p}}_h^\Gamma),
      \\\label{def:Eh2}
      \mathcal{E}_{h,2}(q_h)
      &\coloneq
      \sum_{T\in\t_{h}}(f,q_T)_T - b_h(\widehat{\boldsymbol{\underline{u}}}_h,q_h),      
      \\\label{def:Eh3}
      \mathcal{E}_{h,3}(\underline{q}_h^\Gamma)
      &\coloneq
      \sum_{F\in\f_{h}^\Gamma}(\ell_F f_\Gamma,q_F^\Gamma)_F
      + c_h(\widehat{\boldsymbol{\underline{u}}}_h,\underline{q}_h^\Gamma)        
      - d_h(\widehat{\underline{p}}_h^\Gamma,\underline{q}_h^\Gamma).
    \end{align}
  \end{subequations}
  We next estimate the error $\epsilon_h$ on the bulk pressure.
  The inf-sup condition~\eqref{eq:inf-sup.bh} yields the existence of $\underline{\boldsymbol{v}}_h\in\underline{\boldsymbol{U}}_{h,0}^k$ such that
  \begin{equation}\label{eq:err.est:vh}
  \text{$\|\epsilon_h\|_{\bulk,h}^2 = -b_h(\underline{\boldsymbol{v}}_h,\epsilon_h)$
    and $\|\underline{\boldsymbol{v}}_h\|_{\boldsymbol{U},\xi,h} \le\beta\|\epsilon_h\|_{\bulk,h}$.}
  \end{equation}
  Hence,
  $$
  \begin{aligned}
    \|\epsilon_h\|_{\bulk,h}^2
    &= b_h(\underline{\boldsymbol{v}}_h,p_h) - b_h(\underline{\boldsymbol{v}}_h,\widehat{p}_h)
    \\
    &=a_h^\xi(\underline{\boldsymbol{u}}_h,\underline{\boldsymbol{v}}_h)
    + c_h(\underline{\boldsymbol{v}}_h,\underline{p}_h^\Gamma)
    - b_h(\underline{\boldsymbol{v}}_h,\widehat{p}_h)
    \\
    &= a_h^\xi(\underline{\boldsymbol{e}}_h,\underline{\boldsymbol{v}}_h)
    + c_h(\underline{\boldsymbol{v}}_h,\underline{\epsilon}_h^\Gamma)
    - \mathcal{E}_{h,1}(\underline{\boldsymbol{v}}_h),
  \end{aligned}
  $$
  where we have used the linearity of $b_h$ in its second argument in the first line,~\eqref{eq:discret:1} in the second line (recall that $g_\bulk\equiv 0$ owing to~\eqref{eq:dirichlet.only}), and we have inserted $\pm\big(a_h^\xi(\widehat{\underline{\boldsymbol{u}}}_h,\underline{\boldsymbol{v}}_h)
  + c_h(\underline{\boldsymbol{v}}_h,\widehat{\underline{p}}_h^\Gamma)\big)$ to conclude.
  Using the Cauchy--Schwarz inequality together with~\eqref{eq:stability:ah} for the first term, the boundedness~\eqref{eq:boundedness:ch} of the second, and the linearity of $\mathcal{E}_{h,1}$ together with the second bound in~\eqref{eq:stability:ah} for the third, we get
  $$
  \|\epsilon_h\|_{\bulk,h}^2
  \lesssim\left(
  \varrho_\bulk^{\nicefrac12}\|\underline{\boldsymbol{e}}_h\|_{a,\xi,h}
  + \llkn^{-\nicefrac12}\|\underline{\epsilon}_h^\Gamma\|_{\Gamma,h}
  + \varrho_\bulk^{\nicefrac12}\mathcal{E}_{h,1}(\underline{\boldsymbol{v}}_h/\|\underline{\boldsymbol{v}}_h\|_{a,\xi,h})
  \right)\|\underline{\boldsymbol{v}}_h\|_{\boldsymbol{U},\xi,h}.
  $$
  Using the inequality in~\eqref{eq:err.est:vh} to bound the second factor,  and naming $\chi$ the hidden constant, we arrive at
  \begin{equation}\label{eq:basic.est:2}
    \chi\|\epsilon_h\|_{\bulk,h}
    \le
    \|\underline{\boldsymbol{e}}_h\|_{a,\xi,h}
    + \|\underline{\epsilon}_h^\Gamma\|_{\Gamma,h}
    + \mathcal{E}_{h,1}(\underline{\boldsymbol{v}}_h/\|\underline{\boldsymbol{v}}_h\|_{\boldsymbol{U},\xi,h}).
  \end{equation}

  \paragraph{Step 2: Bound of the conformity error components}
  We proceed to bound the conformity error components for a generic $(\boldsymbol{\underline{v}}_h,q_h,\underline{q}_h^\Gamma)\in\boldsymbol{\underline{X}}_h$.

    To bound $\mathcal{E}_{h,1}$, we use the following reformulations of the first and second contribution, whose proofs are given in {\bf Steps 4-5} below:
  \begin{equation}\label{eq:Eh11}
    \begin{aligned}
      a_h^\xi(\widehat{\boldsymbol{\underline{u}}}_h,\boldsymbol{\underline{v}}_h)&=
      \sum_{F\in\f_{h}^\Gamma}\left(
      (\lambda_F^\xi[\![\boldsymbol{u}]\!]_\Gamma\cdot\normal_\Gamma,[\![\underline{\boldsymbol{v}}_{h}]\!]_F)_F
      + (\lambda_F \{\!\{\boldsymbol{u}\}\!\}_\Gamma\cdot\normal_\Gamma,\{\!\{\underline{\boldsymbol{v}}_{h}\}\!\}_F)_F
      \right)
      \\
      &\qquad
      +\sum_{T\in\t_h}\sum_{F\in\f_T}(\boldsymbol{K}_T\nabla(\widecheck{p}_T-p_{|T})\cdot\normal_{TF},\pi_F^kw_T - \pi_T^kw_T)_F
      \\
      &\qquad\qquad
      -\sum_{T\in\t_h}(\nabla p,\FT\underline{\boldsymbol{v}}_T)_T-\sum_{T\in\t_h} J_T(\widehat{\underline{\boldsymbol{u}}}_T,\underline{\boldsymbol{v}}_T)_T,
    \end{aligned}
  \end{equation}
  where, for all $T\in\t_h$, $w_T\in\mathbb{P}^{k+1}(T)$ is such that $\FT\underline{\boldsymbol{v}}_T=\boldsymbol{K}_T\nabla w_T$ and
  \begin{equation}\label{eq:Eh12}
    \begin{aligned}
      b_h(\underline{\boldsymbol{v}}_h,\widehat{p}_h)&=
      b_h(\underline{\boldsymbol{v}}_h,\pi_h^k(p-\widecheck{p}_h))     
      + \sum_{T\in\t_h}\sum_{F\in\f_T}(\widecheck{p}_T-p_{|T}, v_{TF})_F
      + c_h(\underline{\boldsymbol{v}}_h, \widehat{\underline{p}}_h^\Gamma)
      \\
      &\qquad+\sum_{F\in\f_{h}^\Gamma}\left(
      (\lambda_F^\xi[\![\boldsymbol{u}]\!]_\Gamma\cdot\normal_\Gamma,[\![\underline{\boldsymbol{v}}_{h}]\!]_F)_F
      + (\lambda_F \{\!\{\boldsymbol{u}\}\!\}_\Gamma\cdot\normal_\Gamma,\{\!\{\underline{\boldsymbol{v}}_{h}\}\!\}_F)_F
      \right)
      \\
      &\qquad\qquad-\sum_{T\in\t_h}(\nabla p,\FT\underline{\boldsymbol{v}}_T)_T.      
    \end{aligned}
  \end{equation}
  Using~\eqref{eq:Eh11} and~\eqref{eq:Eh12} in~\eqref{def:Eh1}, we infer that
  $$
  \begin{aligned}
    \mathcal{E}_{h,1}(\boldsymbol{\underline{v}}_h)
    &= b_h(\underline{\boldsymbol{v}}_h,\pi_h^k(p-\widecheck{p}_h))
    + \sum_{T\in\t_h}\sum_{F\in\f_T}(\widecheck{p}_T-p_{|T}, v_{TF})_F
    \\
    &
    -\sum_{T\in\t_h}\sum_{F\in\f_T}(\boldsymbol{K}_T\nabla(\widecheck{p}_T-p_{|T})\cdot\normal_{TF},\pi_F^kw_T - \pi_T^kw_T)_F
    + \sum_{T\in\t_h} J_T(\widehat{\underline{\boldsymbol{u}}}_T,\underline{\boldsymbol{v}}_T)_T.
  \end{aligned}
  $$
  Using the boundedness~\eqref{eq:continuity:bh} of $b_h$ together with the third bound in~\eqref{eq:approx:cpT} to estimate the first term, Cauchy--Schwarz inequalities together with the fourth bound in~\eqref{eq:approx:cpT} and the first bound in~\eqref{proof:equivMTST} to estimate the second term, Cauchy--Schwarz inequalities together with the fact that $
  {h_T^{-\nicefrac12}\|\pi_F^kw_T-\pi_T^kw_T\|_F}\lesssim {h_T^{-1}\|w_T-\pi_T^kw_T\|_T}\lesssim
  {\lKBT^{-\nicefrac12}\|\FT\underline{\boldsymbol{v}}_T\|_T}$ (a consequence of the $L^2(F)$-boundedness of $\pi_F^k$ and~\eqref{eq:approx.trace:lproj} with $l=k+1$, $m=0$, and $s=1$) to estimate the third term, and~\eqref{eq:consistency:JT} to estimate the fourth term, we infer that
  \begin{equation}\label{eq:Eh1}
    |\mathcal{E}_{h,1}(\boldsymbol{\underline{v}}_h)|\lesssim\left(
    \sum_{T\in\t_h}\varrho_{\bulk,T}\uKBT h_T^{2(k+1)} \|p\|_{H^{k+2}(T)}^2
    \right)^{\nicefrac12}\|\underline{\boldsymbol{v}}_h\|_{m,h}.
  \end{equation}

  For the second error component, using~\eqref{eq:strong:bulk:2}, the definition~\eqref{def:bh} of the bilinear form $b_h$, and the commuting property~\eqref{lem:commutdivbulk} of the local divergence reconstruction, we get
  \begin{equation}\label{eq:Eh2}
    \mathcal{E}_{h,2}(\boldsymbol{\underline{v}}_h)
    = \sum_{T\in\t_h}
    (\nabla\cdot\boldsymbol{u} - \pi_T^k(\nabla\cdot\boldsymbol{u}),q_T)_T
    =0,
  \end{equation}
  where we have used the fact that $q_T\in\mathbb{P}^k(T)$ and the definition~\eqref{def:lproj} of $\pi_{T}^{k}$ to conclude.

  We next observe that, for all $F\in\f_T^\Gamma$ such that $F\subset\partial T_1\cap\partial T_2$ for distinct mesh elements $T_1,T_2\in\t_h$,
  \begin{subequations}\label{eq:Eh11:4}
      \begin{align}\label{eq:Eh11:4:jump}
        [\![\widehat{\underline{\boldsymbol{u}}}_h]\!]_F
        &= \pi_F^k\left(
        \boldsymbol{u}_{|T_1}\cdot\normal_{T_1F} + \boldsymbol{u}_{|T_2}\cdot\normal_{T_2F}
        \right)
        = \pi_F^k\left([\![\boldsymbol{u}]\!]\cdot\normal_\Gamma\right),
        \\ \label{eq:Eh11:4:average}
        \{\!\{\widehat{\underline{\boldsymbol{u}}}_h\}\!\}_F
        &= \frac12\pi_F^k\left(\boldsymbol{u}_{|T_1}\cdot\normal_{\Gamma}
        + \boldsymbol{u}_{|T_2}\cdot\normal_\Gamma    
        \right)
        = \pi_F^k\left(\{\!\{\boldsymbol{u}\}\!\}\cdot\normal_\Gamma\right).
      \end{align}
  \end{subequations}
  For the third error component, we can then write
  $$
  \begin{aligned}
    \mathcal{E}_{h,3}(q_h)
    &= \sum_{F\in\f_h^\Gamma}(\ell_F f_\Gamma + [\![ \widehat{\underline{\boldsymbol{u}}}_h ]\!]_F, q_F^\Gamma)_F
    - d_h(\widehat{\underline{p}}_h^\Gamma,\underline{q}_h^\Gamma)
    \\
    &= \sum_{F\in\f_h^\Gamma}(\ell_F f_\Gamma + [\![ \boldsymbol{u} ]\!]_\Gamma\cdot\normal_\Gamma, q_F^\Gamma)_F
    - d_h(\widehat{\underline{p}}_h^\Gamma,\underline{q}_h^\Gamma)
    \\
    &= -\sum_{F\in\f_h^\Gamma}(\nabla_\tau\cdot(K_F\nabla_\tau p_\Gamma),q_F^\Gamma)_F
    - d_h(\widehat{\underline{p}}_h^\Gamma,\underline{q}_h^\Gamma),
  \end{aligned}
  $$
  where we have expanded the bilinear form $c_h$ according to its definition~\eqref{def:ch} in the first line, we have used~\eqref{eq:Eh11:4:jump} followed by~\eqref{def:lproj} and the fact that $q_F^\Gamma\in\mathbb{P}^k(F)$ to remove $\pi_F^k$ in the second line, and we have concluded invoking~\eqref{eq:strong:fract:1}.
  The consistency property~\eqref{eq:consistency:dh} then gives
  \begin{equation}\label{eq:Eh3}
    |\mathcal{E}_{h,3}(q_h)|\lesssim\left(
    \sum_{F\in\f_h^\Gamma}K_F h_F^{2(k+1)}\|p_\Gamma\|_{H^{k+2}(F)}^2
    \right)\|\underline{q}_h^\Gamma\|_{\Gamma,h}.
  \end{equation}

  \paragraph{Step 3: Conclusion}
  Using~\eqref{eq:Eh1},~\eqref{eq:Eh2}, and~\eqref{eq:Eh3} with $(\underline{\boldsymbol{v}}_h,q_h,\underline{q}_h^\Gamma)=(\underline{\boldsymbol{e}}_h,\epsilon_h,\underline{\epsilon}_h^\Gamma)$ to estimate the right-hand side of~\eqref{eq:basic.est:1}, and recalling that $\|\underline{\boldsymbol{e}}_h\|_{m,h}\le\|\underline{\boldsymbol{e}}_h\|_{a,\xi,h}$, we infer that
  \begin{equation}\label{eq:err.est:bulk.flux+fracture}
    \begin{aligned}
    \|\underline{\boldsymbol{e}}_h\|_{a,\xi,h} + \|\underline{\epsilon}_h^\Gamma\|_{\Gamma,h}
    \lesssim\Bigg(
    \sum_{T\in\t_h}\varrho_{\bulk,T}\uKBT &h_T^{2(k+1)} \|p\|_{H^{k+2}(T)}^2
    \\
    &+ \sum_{F\in\f_h^\Gamma}K_F h_F^{2(k+1)}\|p_\Gamma\|_{H^{k+2}(F)}^2
    \Bigg)^{\nicefrac12},
    \end{aligned}
  \end{equation}
  which, in view of the first inequality in~\eqref{eq:stability:ah}, gives the bounds on the first and second term in the left-hand side of~\eqref{eq:energyerror}.
  Plugging~\eqref{eq:err.est:bulk.flux+fracture} and~\eqref{eq:Eh1} into~\eqref{eq:basic.est:2}, and recalling that $\|\underline{\boldsymbol{v}}_{h}\|_{m,h}\le\|\underline{\boldsymbol{v}}_{h}\|_{a,\xi,h}$ gives the estimate for the third term in the left-hand side of~\eqref{eq:energyerror}.

  \paragraph{Step 4: Proof of~\eqref{eq:Eh11}}%
  For every mesh element $T\in\t_h$, we have that
  \begin{equation}\label{eq:Eh11:1}
    \begin{aligned}
      (\boldsymbol{K}_T^{-1}\FT\widehat{\underline{\boldsymbol{u}}}_T,\FT\underline{\boldsymbol{v}}_T)_T
      &= (\FT\widehat{\underline{\boldsymbol{u}}}_T,\nabla w_T)_T
      \\
      &= -(\DT\widehat{\underline{\boldsymbol{u}}}_T, w_T)_T + \sum_{f\in\f_T}(\widehat{u}_{TF},w_T)_F
      \\
      &= -(\pi_T^k(\nabla\cdot\boldsymbol{u}), w_T)_T + \sum_{f\in\f_T}(\pi_F^k(\boldsymbol{u}\cdot\normal_{TF}),w_T)_F
      \\
      &= -(\nabla\cdot\boldsymbol{u},\pi_T^kw_T)_T + \sum_{f\in\f_T}(\boldsymbol{u}\cdot\normal_{TF},\pi_F^kw_T)_F
      \\
      &= (\boldsymbol{u},\nabla\pi_T^kw_T)_T + \sum_{f\in\f_T}(\boldsymbol{u}\cdot\normal_{TF},\pi_F^kw_T-\pi_T^kw_T)_F,
    \end{aligned}
  \end{equation}
  where we have used the fact that $\FT\underline{\boldsymbol{v}}_T=\boldsymbol{K}_T\nabla w_T$ in the first line, the definition~\eqref{def:operatorfluxbulk} of $\FT\widehat{\underline{\boldsymbol{u}}}_T$ in the second line, the commuting property~\eqref{lem:commutdivbulk} together with the definition~\eqref{def:globalinterpoperator} of $\boldsymbol{\underline{I}}_h^k$ in the third line, the definition~\eqref{def:lproj} of the $L^2$-orthogonal projectors $\pi_T^k$ and $\pi_F^k$ to pass to the fourth line, and an integration by parts to conclude.
  
  On the other hand, recalling again that $\FT\underline{\boldsymbol{v}}_T=\boldsymbol{K}_T\nabla w_T$ and using the definition~\eqref{eq:cpT} of the local elliptic projection, we have that
  \begin{equation}\label{eq:Eh11:2}
    \begin{aligned}
      &(\nabla p,\FT\underline{\boldsymbol{v}}_T)_T
      =(\boldsymbol{K}_T\nabla p,\nabla w_T)_T
      = (\boldsymbol{K}_T\nabla\widecheck{p}_T,\nabla w_T)_T
      \\
      &\qquad\qquad= -(\nabla\cdot(\boldsymbol{K}_T\nabla\widecheck{p}_T), w_T)_T
      + \sum_{F\in\f_T}(\boldsymbol{K}_T\nabla\widecheck{p}_T\cdot\normal_{TF}, w_T)_T
      \\
      &\qquad\qquad= -(\nabla\cdot(\boldsymbol{K}_T\nabla\widecheck{p}_T),\pi_T^k w_T)_T
      + \sum_{F\in\f_T}(\boldsymbol{K}_T\nabla\widecheck{p}_T\cdot\normal_{TF}, \pi_F^k w_T)_T
      \\
      &\qquad\qquad=
      (\boldsymbol{K}_T\nabla p,\nabla\pi_T^kw_T)_T
      + \sum_{F\in\f_T}(\boldsymbol{K}_T\nabla\widecheck{p}_T\cdot\normal_{TF},\pi_F^kw_T-\pi_T^kw_T)_F,
    \end{aligned}
  \end{equation}
  where we have used an integration by parts to pass to the second line, the definition~\eqref{def:lproj} of the $L^2$-orthogonal projectors $\pi_T^k$ and $\pi_F^k$ together with the fact that $\nabla\cdot(\boldsymbol{K}_T\nabla\widecheck{p}_T)\in\mathbb{P}^{k-1}(T)\subset\mathbb{P}^k(T)$ and $(\boldsymbol{K}_T\nabla\widecheck{p}_T)_{|F}\cdot\normal_{TF}\in\mathbb{P}^k(F)$ for all $F\in\f_T$ (since $w_T\in\mathbb{P}^{k+1}(T)$ and $\boldsymbol{K}_T\in\mathbb{P}^0(T)^{2\times 2}$) in the second line, and again an integration by parts together with the definition~\eqref{eq:cpT} to replace $\widecheck{p}_T$ by $p$ in the first term and conclude.

  Summing~\eqref{eq:Eh11:1} and~\eqref{eq:Eh11:2}, using~\eqref{eq:strong:bulk:1} to replace $\boldsymbol{u}$ by $-\boldsymbol{K}\nabla p$, and rearranging the terms, we finally obtain 
  \begin{equation}\label{eq:Eh11:3}
    \begin{aligned}
    (\boldsymbol{K}_T^{-1}\FT\widehat{\underline{\boldsymbol{u}}}_T,\FT\underline{\boldsymbol{v}}_T)_T
    &=  - (\nabla p,\FT\underline{\boldsymbol{v}}_T)_T
    \\
    &\qquad    + \sum_{F\in\f_T}(\boldsymbol{K}_T\nabla(\widecheck{p}_T-p)\cdot\normal_{TF},\pi_F^kw_T-\pi_T^kw_T)_F.
    \end{aligned}
  \end{equation}

  Using~\eqref{eq:Eh11:3} for the consistency term in $m_T(\widehat{\underline{\boldsymbol{u}}}_T,\underline{\boldsymbol{v}}_T)$ (see~\eqref{eq:mT}), plugging the resulting relation into the expression of $a_h^\xi(\widehat{\underline{\boldsymbol{u}}}_h,\underline{\boldsymbol{v}}_h)$ (see~\eqref{def:ah}), and accounting for~\eqref{eq:Eh11:4} in the fracture terms of $a^\xi_h(\widehat{\underline{\boldsymbol{u}}}_h,\underline{\boldsymbol{v}}_h)$ (where $\pi_F^k$ can be cancelled using~\eqref{def:lproj} after observing that $\lambda_F^\xi[\![\underline{\boldsymbol{v}}_h]\!]_F\in\mathbb{P}^k(F)$ and $\lambda_F[\![\underline{\boldsymbol{v}}_h]\!]_F\in\mathbb{P}^k(F)$ for all $F\in\f_h^\Gamma$) gives~\eqref{eq:Eh11}.
  
  \paragraph{Step 5: Proof of~\eqref{eq:Eh12}}
  We have that
  \begin{equation}\label{eq:Eh12:1}
    \begin{aligned}
      &b_h(\underline{\boldsymbol{v}}_h,\widehat{p}_h)
      = b_h(\underline{\boldsymbol{v}}_h,\pi_h^k(p-\widecheck{p}_h))
      + b_h(\underline{\boldsymbol{v}}_h,\pi_h^k\widecheck{p}_h)
      \\
      &\qquad=
      b_h(\underline{\boldsymbol{v}}_h,\pi_h^k(p-\widecheck{p}_h))
      +\hspace{-1.125ex}\sum_{T\in\t_h}(\widecheck{p}_T,\DT\underline{\boldsymbol{v}}_T)_F
      \\
      &\qquad=
      b_h(\underline{\boldsymbol{v}}_h,\pi_h^k(p-\widecheck{p}_h))    
      +\hspace{-1.125ex}\sum_{T\in\t_h}\left(
      \sum_{F\in\f_T}(\widecheck{p}_T, v_{TF})_F
      -(\nabla\widecheck{p}_T,\FT\underline{\boldsymbol{v}}_T)_T
      \right)
      \\
      &\qquad=
      b_h(\underline{\boldsymbol{v}}_h,\pi_h^k(p-\widecheck{p}_h))
      +\hspace{-1.125ex}\sum_{T\in\t_h}\hspace{-0.25ex}\sum_{F\in\f_T}(\widecheck{p}_T-p_{|T}, v_{TF})_F
      -\hspace{-1.125ex}\sum_{T\in\t_h}(\nabla p,\FT\underline{\boldsymbol{v}}_T)_T
      \\
      &\qquad\qquad+\hspace{-1.125ex}\sum_{T\in\t_h}\hspace{-0.25ex}\sum_{F\in\f_T}(p_{|T}, v_{TF})_F,
    \end{aligned}
  \end{equation}
  where we have inserted $\pm\pi_h^k\widecheck{p}_h$ into the second argument of $b_h$ and used its linearity in the first line, expanded the second term according to its definition~\eqref{def:bh} and cancelled the projector since $\DT\underline{\boldsymbol{v}}_T\in\mathbb{P}^k(T)$ for all $T\in\t_h$ in the second line, used the definition~\eqref{def:operatorfluxbulk} of $\FT\underline{\boldsymbol{v}}_T$ (with $w_T=\widecheck{p}_T$) in the third line, and we have inserted $\pm\sum_{T\in\t_h}\sum_{F\in\f_T}(p_{|T}, v_{TF})_F$ to pass to the fourth line, where~\eqref{eq:cpT} was also used to write $p$ instead of $\widecheck{p}_T$ in the third term.

  Let us consider the last term in~\eqref{eq:Eh12:1}. Rearranging the sums and using the fact that $p=0$ on every boundary face $F\in\f_h^{\rm b}$ owing to~\eqref{eq:dirichlet.only}, it is inferred that
  $$
  \begin{aligned}
    \sum_{T\in\t_h}\sum_{F\in\f_T}(p_{|T}, v_{TF})_F
    = \sum_{F\in\f_h}\sum_{T\in\t_F}(p_{|T}, v_{TF})_F
    = \hspace*{-10px}\sum_{\substack{F\in\f_h^{\rm i}\\F\subset\partial T_1\cap\partial T_2}}\int_F\left(p_{|T_1} v_{T_1F} + p_{|T_2} v_{T_2F}\right).
  \end{aligned}
  $$
  If $F\in\f_h^{\rm i}\setminus\f_h^\Gamma$, the integrand vanishes since $v_{T_1F}+v_{T_2F}=0$ (see the definition~\eqref{eq:Uh0} of $\underline{\boldsymbol{U}}_{h,0}^{k}$) and $p_{|T_1}-p_{|T_2}=0$ since the jumps of the bulk pressure vanish across interfaces in the bulk region.
  If, on the other hand, $F\in\f_h^\Gamma$, assuming without loss of generality that $T_{i}\subset\Omega_{\bulk,i}$ for $i\in\{1,2\}$, it can be checked that $p_{|T_1} v_{T_1F} + p_{|T_2} v_{T_2F}=[\![p]\!]_\Gamma\{\!\{\underline{\boldsymbol{v}}_h\}\!\}_F+\{\!\{p\}\!\}_\Gamma[\![\underline{\boldsymbol{v}}_h]\!]_F$.
  In conclusion, we have that
  \begin{equation}\label{eq:Eh12:2}
  \int_F\left(p_{|T_1} v_{T_1F} + p_{|T_2} v_{T_2F}\right)
  = \begin{cases}
    0 & \text{if $F\in\f_h^{\rm i}\setminus\f_h^\Gamma$,}
    \\
    ([\![p]\!]_\Gamma,\{\!\{\underline{\boldsymbol{v}}_h\}\!\}_F)_F
    + (\{\!\{p\}\!\}_\Gamma,[\![\underline{\boldsymbol{v}}_h]\!]_F)_F
    & \text{if $F\in\f_h^\Gamma$}.
  \end{cases}
  \end{equation}
  Plugging~\eqref{eq:Eh12:2} into~\eqref{eq:Eh12:1}, and using~\eqref{eq:strong:couplcond} to replace $[\![p]\!]_\Gamma$ and $\{\!\{p\}\!\}_\Gamma$,~\eqref{eq:Eh12} follows.
\end{proof}


\bibliographystyle{plain}
\begin{footnotesize}
  \bibliography{ffpmhho}

\begin{thebibliography}{10}

\bibitem{Aghili.Boyaval.ea:15}
J.~Aghili, S.~Boyaval, and D.~A. Di~Pietro.
\newblock Hybridization of mixed high-order methods on general meshes and
  application to the {Stokes} equations.
\newblock {\em Comput. Meth. Appl. Math.}, 15(2):111--134, 2015.

\bibitem{Angot.Boyer.Hubert:09}
P.~Angot, F.~Boyer, and F.~Hubert.
\newblock Asymptotic and numerical modelling of flows in fractured porous
  media.
\newblock {\em ESAIM: Math. Model Numer. Anal.}, 43(2):239--275, 2009.

\bibitem{Angot.Gallouet.Herbin:99}
P.~Angot, T.~Gallou\"et, and R.~Herbin.
\newblock Convergence of finite volume methods on general meshes for non smooth
  solution of elliptic problems with cracks.
\newblock {\em Finite Volumes for Complex Applications II}, pages 215--222,
  1999.

\bibitem{Antonietti.Facciola.Russo.Verani:16}
P.~F. Antonietti, C.~Facciola, A.~Russo, and M.~Verani.
\newblock Discontinuous {Galerkin} approximation of flows in fractured porous
  media, 2016.
\newblock MOX report No. 22/2016.

\bibitem{Antonietti.Facciola.Russo.Verani.bis:16}
P.~F. Antonietti, C.~Facciola, A.~Russo, and M.~Verani.
\newblock Discontinuous {Galerkin} approximation of flows in fractured porous
  media on polytopic grids, 2016.
\newblock MOX report No. 55/2016.

\bibitem{Antonietti.Formaggia.Scotti.Verani.Verzotti:15}
P.~F. Antonietti, L.~Formaggia, A.~Scotti, M.~Verani, and N.~Verzotti.
\newblock Mimetic finite difference approximation of flows in fractured porous
  media.
\newblock {\em ESAIM: Math. Model Numer. Anal.}, 50(3):809--832, 2016.

\bibitem{Bastian.Chen.Ewing.Helmig.Jakobs.Reichenberger:00}
P.~Bastian, Z.~Chen, R.~E. Ewing, R.~Helmig, H~Jakobs, and V.~Reichenberger.
\newblock Numerical simulation of multiphase flow in fractured porous media.
\newblock {\em Numerical Treatment of Multiphase Flows in Porous Media},
  52:50--68, 1999.

\bibitem{Benedetto:2016:HMV:2868298.2868447}
M.~F. Benedetto, S.~Berrone, A.~Borio, S.~Pieraccini, and S.~Scial\`{o}.
\newblock A hybrid mortar virtual element method for discrete fracture network
  simulations.
\newblock {\em J. Comput. Phys.}, 306(C):148--166, 2016.

\bibitem{Benedetto.Berrone.Pieraccini.Scialo:14}
M.~F. Benedetto, S.~Berrone, S.~Pieraccini, and S.~Scial\`o.
\newblock The {Virtual Element Method} for discrete fracture network
  simulations.
\newblock {\em Comput. Meth. Appl. Mech. Engrg.}, 280:135--156, 2014.

\bibitem{BENEDETTO201623}
M.~F. Benedetto, S.~Berrone, and S.~Scial\`{o}.
\newblock A globally conforming method for solving flow in discrete fracture
  networks using the virtual element method.
\newblock {\em Finite Elements in Analysis and Design}, 109:23 -- 36, 2016.

\bibitem{Berrone.Pieraccini.Scialo:13}
S.~Berrone, S.~Pieraccini, and S.~Scial\`o.
\newblock On simulations of discrete fracture network flows with an
  optimization-based extended finite element method.
\newblock {\em SIAM J. Sci. Comput.}, 35:908--935, 2013.

\bibitem{Boffi.Brezzi.ea:13}
D.~Boffi, F.~Brezzi, and M.~Fortin.
\newblock {\em Mixed finite element methods and applications}, volume~44 of
  {\em Springer Series in Computational Mathematics}.
\newblock Springer, Berlin Heidelberg, 2013.

\bibitem{Boffi.Di-Pietro:16}
D.~Boffi and D.~A. Di~Pietro.
\newblock Unified formulation and analysis of mixed and primal discontinuous
  skeletal methods on polytopal meshes.
\newblock {\em ESAIM: Math. Model Numer. Anal.}, 2017.
\newblock Accepted for publication.
  DOI:~\href{http://dx.doi.org/10.1051/m2an/2017036}{10.1051/m2an/2017036}.

\bibitem{Boon.Nordbotten:16}
W.~M. Boon and J.~M. Nordbotten.
\newblock Robust discretization of flow in fractured porous media, 2016.
\newblock Submitted.
  Preprint~\href{http://arxiv.org/abs/1601.06977}{arXiv:1601.06977} [math.NA].

\bibitem{Brenner.Groza.ea:16}
K.~Brenner, M.~Groza, C.~Guichard, G.~Lebeau, and R.~Masson.
\newblock Gradient discretization of hybrid dimensional {Darcy} flows in
  fractured porous media.
\newblock {\em Numer. Math.}, 134(3):569--609, 2016.

\bibitem{Brenner.Groza.Jeannin.Masson.Pellerin:16}
K.~Brenner, M.~Groza, L.~Jeannin, R.~Masson, and J.~Pellerin.
\newblock Immiscible two-phase {Darcy} flow model accounting for vanishing and
  discontinuous capillary pressures: application to the flow in fractured
  porous media, 2016.
\newblock Submitted.
  Preprint~\href{https://hal.archives-ouvertes.fr/hal-01338512}{hal-01338512}.

\bibitem{Brenner.Hennicker.Masson.Samier:16}
K.~Brenner, J.~Hennicker, R.~Masson, and P.~Samier.
\newblock Gradient discretization of hybrid dimensional {Darcy} flows in
  fractured porous media with discontinuous pressure at matrix fracture
  interfaces.
\newblock {\em IMA J. Numer. Anal.}, 37(3):1551--1585, 2017.

\bibitem{Coats.George.Chu.Marcum:74}
K.~H. Coats, W.~D. George, C.~Chu, and B.~E. Marcum.
\newblock Three-dimensional simulation of steamflooding.
\newblock {\em Society of Petroleum Engineers}, 14(6):573--592, 1974.

\bibitem{Di-Pietro.Droniou:16}
D.~A. Di~Pietro and J.~Droniou.
\newblock A {Hybrid High-Order} method for {Leray--Lions} elliptic equations on
  general meshes.
\newblock {\em Math. Comp.}, 86(307):2159--2191, 2017.

\bibitem{Di-Pietro.Droniou.bis:16}
D.~A. Di~Pietro and J.~Droniou.
\newblock {$W^{s,p}$}-approximation properties of elliptic projectors on
  polynomial spaces, with application to the error analysis of a {Hybrid
  High-Order} discretisation of {Leray--Lions} problems.
\newblock {\em Math. Models Methods Appl. Sci.}, 27(5):879--908, 2017.

\bibitem{Di-Pietro.Ern:12}
D.~A. Di~Pietro and A.~Ern.
\newblock {\em Mathematical aspects of discontinuous {G}alerkin methods},
  volume~69 of {\em Math\'ematiques \& Applications}.
\newblock Springer-Verlag, Berlin, 2012.

\bibitem{Di-Pietro.Ern:15}
D.~A. Di~Pietro and A.~Ern.
\newblock A hybrid high-order locking-free method for linear elasticity on
  general meshes.
\newblock {\em Comput. Meth. Appl. Mech. Engrg.}, 283:1--21, 2015.

\bibitem{Di-Pietro.Ern:16}
D.~A. Di~Pietro and A.~Ern.
\newblock Arbitrary-order mixed methods for heterogeneous anisotropic diffusion
  on general meshes.
\newblock {\em IMA J. Numer. Anal.}, 37(1):40--63, 2016.

\bibitem{Di-Pietro.Ern.ea:14}
D.~A. Di~Pietro, A.~Ern, and S.~Lemaire.
\newblock An arbitrary-order and compact-stencil discretization of diffusion on
  general meshes based on local reconstruction operators.
\newblock {\em Comput. Meth. Appl. Math.}, 14(4):461--472, 2014.
\newblock Open access.

\bibitem{Di-Pietro.Ern.ea:16}
D.~A. Di~Pietro, A.~Ern, and S.~Lemaire.
\newblock A review of hybrid high-order methods: formulations, computational
  aspects, comparison with other methods.
\newblock In G.~Barrenechea, F.~Brezzi, A.~Cangiani, and M.~Georgoulis,
  editors, {\em Building bridges: Connections and challenges in modern
  approaches to numerical partial differential equations}, number 114 in
  Lecture Notes in Computational Science and Engineering, chapter~7. Springer,
  2016.

\bibitem{Droniou.Hennicker.ea:16}
J.~Droniou, J.~Hennicker, and R.~Masson.
\newblock Numerical analysis of a two-phase flow discrete fracture model.
\newblock Submitted.
  Preprint~\href{https://hal.archives-ouvertes.fr/hal-01422477}{hal-01422477},
  2016.

\bibitem{Ern.Guermond:04}
A.~Ern and J.-L. Guermond.
\newblock {\em Theory and practice of finite elements}, volume 159 of {\em
  Applied Mathematical Sciences}.
\newblock Springer-Verlag, New York, 2004.

\bibitem{Faille.Flauraud.Nataf.Pegaz.Schneider.Willien:02}
I.~Faille, E.~Flauraud, F.~Nataf, S.~P\'egaz-Fiornet, F.~Schneider, and
  F.~Willien.
\newblock A new fault model in geological basin modelling. application of
  finite volume scheme and domain decomposition methods.
\newblock {\em Finite Volumes for Complex Applications III}, pages 543--550,
  2002.

\bibitem{Gatica:14}
G.~N. Gatica.
\newblock {\em A simple introduction to the mixed finite element method}.
\newblock SpringerBriefs in Mathematics. Springer, Cham, 2014.
\newblock Theory and applications.

\bibitem{Martin.Jaffre.Roberts:05}
V.~Martin, J.~Jaffr\'e, and J.~E. Roberts.
\newblock Modeling fractures and barriers as interfaces for flow in porous
  media.
\newblock {\em SIAM J. Matrix Analysis and Applications}, 26(5):1667--1691,
  2005.

\bibitem{scotti.formaggia.sottocasa:17}
A.~Scotti, L.~Formaggia, and F.~Sottocasa.
\newblock Analysis of a mimetic finite difference approximation of flows in
  fractured porous media, 2017.
\newblock Accepted for publication.
  DOI:~\href{https://doi.org/10.1051/m2an/2017028}{10.1051/m2an/2017028}.

\bibitem{Todd.ODell.Hirasaki:72}
M.~R. Todd, P.~M. O'Dell, and G.~J. Hirasaki.
\newblock Methods for increased accuracy in numerical reservoir simulators.
\newblock {\em Society of Petroleum Engineers}, 12(6):515--530, 1972.

\end{thebibliography}
\end{footnotesize}

\end{document}